\newcommand{\kk}{\mathbb{K}}
\newcommand{\m}{\mathbf{m}}
\newcommand{\Q}{\mathcal{Q}}
\newcommand{\cD}{\mathcal{D}}
\newcommand{\cR}{\mathcal{R}}
\newcommand{\Z}{\mathbb{Z}}
\newcommand{\A}{\mathcal{A}}
\newcommand{\B}{\mathcal{B}}
\DeclareMathOperator{\syz}{syz}
\DeclareMathOperator{\reg}{reg}
\DeclareMathOperator{\cl}{cl}
\DeclareMathOperator{\rank}{rank}
\DeclareMathOperator{\codim}{codim}
\DeclareMathOperator{\im}{Im}
\newcommand{\p}{\mathbf{p}}
\newcommand{\q}{\mathbf{q}}
\newcommand{\bv}{\mathbf{v}}
\newcommand{\dx}{\frac{\partial}{\partial x}}
\newcommand{\dy}{\frac{\partial}{\partial y}}
\newcommand{\dz}{\frac{\partial}{\partial z}}
\newcommand{\bbP}{\mathbb{P}}
\newcommand{\bp}{\mathbf{p}}
\newcommand{\bq}{\mathbf{q}}
\newcommand{\sat}{\text{\footnotesize sat}}
\newcommand{\matroid}{\mathcal{M}}
\newcommand{\wrep}{\text{weak P-Rep}}
\newtheorem{thm}{Theorem}[section]
\newtheorem{cor}[thm]{Corollary}
\newtheorem{lem}[thm]{Lemma}
\newtheorem{prop}[thm]{Proposition}
\theoremstyle{definition}
\newtheorem{defn}[thm]{Definition}
\newtheorem{exm}[thm]{Example}
\newtheorem{remark}[thm]{Remark}
\newtheorem{ques}[thm]{Question}
\newtheorem{notation}[thm]{Notation}
\title{Geometric aspects of the Jacobian of a hyperplane arrangement}
\author{Michael DiPasquale}
\address{Department of Mathematics and Statistics, University of South Alabama}
\email{mdipasquale@southalabama.edu}
\author{Jessica Sidman}
\address{Department of Mathematics and Statistics, Amherst College}
\email{jsidman@amherst.edu}
\author{Will Traves}
\address{Department of Mathematics, United States Naval Academy}
\email{traves@usna.edu}
\thanks{
\noindent\textbf{Keywords}: hyperplane arrangements, Jacobian, saturation, rigidity, formality}  \thanks{\noindent\textbf{2020 Mathematics Subject Classification}: Primary: 
13D02,
14N20,
52C35
;  Secondary: 
52C25 
}
\begin{document}

\maketitle

\begin{abstract}
An embedding of the complete bipartite graph $K_{3,3}$ in $\mathbb{P}^2$ gives rise to both a line arrangement and a bar-and-joint framework. For a generic placement of the six vertices,  the graded Betti numbers of the logarithmic module of derivations of the line arrangement are constant, but an example due to Ziegler shows that the graded Betti numbers are different when the points lie on a conic. 

 
 Similarly, in rigidity theory a generic embedding of $K_{3,3}$ in the plane is an infinitesimally rigid bar-and-joint framework, but the framework is infinitesimally flexible when the points lie on a conic. In this paper we develop the theory of weak perspective representations of hyperplane arrangements to formalize and generalize the striking connection between hyperplane arrangements and rigidity theory that this example suggests.  In particular, we seek to understand how the interplay of combinatorics and geometry influence  algebraic structures associated to an arrangement, such as the saturation of the Jacobian ideal of the arrangement. 
 We make connections between examples and constructions from rigidity theory and interesting phenomena in the study of hyperplane arrangements.


\end{abstract}

\section{Introduction}\label{sec:Intro}

Let $\A \subset \kk^{\ell+1}$ be a central hyperplane arrangement  over a field $\kk$ of characteristic zero 
-- all hyperplanes pass through the origin -- and let $D(\A)$ be the module of logarithmic derivations, consisting of polynomial vector fields tangent to $\A$.  Inspired by examples from rigidity theory, a main aim of this paper is to explore how the combinatorics and geometry of $\A$ influence the saturation of the Jacobian ideal of $\A$ and, ultimately, the graded Betti numbers of $D(\A)$.  We introduce and develop the notion of a weak perspective representation of the matroid of a hyperplane arrangement, inspired by Whiteley's notion of {\em parallel drawings} of an incidence structure~\cite[Chapter~61]{DiscCompGeom18} (see also~\cite{Whiteley87,Whiteley97,NixonSchulzeWhiteley22}). We draw an explicit connection between the geometric notion of a weak perspective representation of $\A$ and the saturation of its Jacobian ideal.

We use the rigidity theory literature to present a number of examples of line arrangements $\A$  (including several infinite families) where $D(\A)$  possesses a minimal free resolution sensitive to the geometry of the line arrangement.   For instance, in Example~\ref{ex: glueing K33} we use a gluing construction from rigidity theory to construct a line arrangement from $m$ copies of $K_{3,3}$ with the property that by deforming the geometry (without changing the intersection lattice) the number of minimal first syzygies of $D(\A)$ of degree $8m$ 
can take any integer value between 0 and $m$. 
Several other similar examples are explored in Section~\ref{sec:ExtremalSyzygies}.  We hope that this paper fosters closer connections between researchers in the communities studying hyperplane arrangements and rigidity theory.

Our inspiration comes from Ziegler's pair \cite{Schenck-2012}, which consists of two line arrangements, $\A$ and $\A'$, both obtained by extending the edges of an embedded complete bipartite graph $K_{3,3}$ in $\mathbb{P}^2$ with intersection lattice consisting of six triple points and 18 double points. In the arrangement $\A$ the six triple points are generic, but in $\A'$ they lie on a conic, and $D(\A')$ has different graded Betti numbers than $D(\A)$ (Ziegler's original condition is stated in terms of certain linear dependencies -- Schenck points out in~\cite{Schenck-2012} that the linear conditions of Ziegler amount to the six triple points lying on a conic).

The geometry underlying Ziegler's pair is identical to that of a well-known example in rigidity theory.  A generic embedding of $K_{3,3}$ in the plane as a bar-and-joint framework is infinitesimally rigid, but if the six joints lie on a conic then it is infinitesimally flexible.  (See \cite{wunderlich} for an exposition.)  As Whiteley observes in~\cite{Whiteley89}, by an `old engineering technique,' a non-trivial infinitesimal motion of the framework is equivalent to a non-trivial parallel redrawing of the bars. Such a non-trivial parallel redrawing is shown in Figure~\ref{fig:K33nondegConic}.  
\begin{figure}[h!]
    \centering
    \includegraphics[width = 4cm]{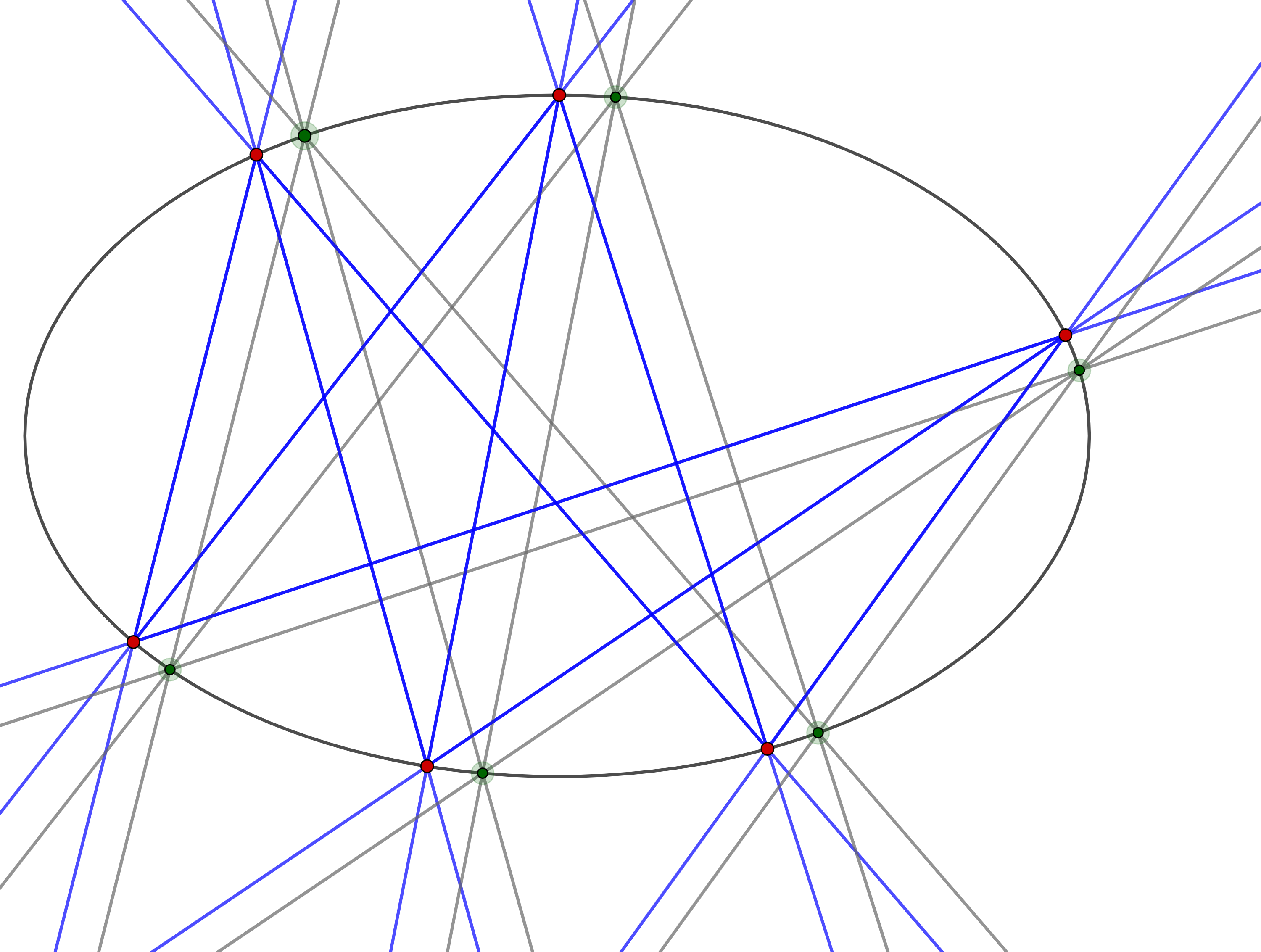}
    \caption{When the vertices of $K_{3,3}$ lie on a conic, the arrangement has non-trivial parallel redrawings, equivalently non-trivial weak perspective representations.}
    \label{fig:K33nondegConic}
\end{figure}
We make this correspondence precise in Section~\ref{sec:PerspectiveMatroidRepresentations}, where the notion of a weak perspective representation generalizes the notion of a parallel redrawing.

 Ziegler offers his pair of line arrangements as an indication of the subtlety of a conjecture that has become known as \textit{Terao's conjecture} on the freeness of hyperplane arrangements.  An arrangement 
$\A$ is called \textit{free} if its module of logarithmic derivations $D(\A)$ is a free module over the polynomial ring.  Terao's conjecture~\cite[Conjecture~4.138]{OT92} proposes that freeness over a fixed field is combinatorial (that is, it depends only on the lattice $L(\A)$). 
In Ziegler's pair, neither line arrangement is free, but the example shows that the algebraic structure of the module of logarithmic derivations depends in subtle ways on the geometry of the arrangement.  Indeed, understanding how the algebraic structure of $D(\A)$ interacts with the geometry and combinatorics of $\A$ is at the heart of Terao's conjecture.  While none of the examples from this paper yield a counterexample, they underscore that rigidity theory provides a good place to seek a counterexample to Terao's conjecture.


Our motivation to connect rigidity to hyperplane arrangements stems from the notion of formality, introduced by Falk and Randell~\cite{FalkRandellHomotopy87}.  Falk and Randell call an arrangement \textit{formal} if all relations among its defining linear forms are generated by relations of length three.  This interesting property is also studied in connection with the Orlik-Terao algebra~\cite{Schenck-Tohaneanu-2009,Le-Mohammadi-2015}, which is a commutative analogue of the celebrated Orlik-Solomon algebra.

Falk and Randell conjectured, and Yuzvinsky proved~\cite{Yuz93A}, that free arrangements are formal. They also asked whether formality is combinatorial but  Yuzvinsky~\cite[Example~2.2]{Yuz93A} showed that Ziegler's pair is a counterexample. In the same paper, Yuzvinsky asked about the relationship between the graded Betti numbers of $D(\A)$ and the formality of $\A$.  As we discuss below, we provide a precise answer to this question for line arrangements.

One of our contributions in this paper is a series of tight connections between the geometric notions of formality, perspective representations, and rigidity.  
We characterize the formality of an arrangement in terms of its persepective representations. In Section~\ref{sec:PerspectiveMatroidRepresentations} we show an arrangement is formal if and only if it admits only trivial perspective representations.  
In particular, 
we show that a line arrangement is formal if and only if an associated graph is rigid (Proposition~\ref{prop:correspondences}).  It is well-known in the rigidity theory literature that rigidity of a graph is a geometric (not combinatorial) property, and hence so is formality.

Another contribution of this paper is a complete description, in characteristic zero, of polynomials of minimal degree in various saturations of the Jacobian ideal of a hyperplane arrangement $\A$ in terms of perspective representations of $\A$ (Theorem~\ref{thm:SaturationParallelDrawing}).  As a corollary, if an arrangement fails to be formal then its module of logarithmic derivations cannot be free.  This recovers (in characteristic zero) the result of Yuzvinsky~\cite{Yuz93A}.  In particular, for line arrangements, we specify in Corollary~\ref{cor:DerivationSyzygies}
exactly how freeness fails when the arrangement is not formal -- the failure of formality contributes a specific graded Betti number to the minimal free resolution of $D(\A)$.

 We provide a road map to the paper.  We begin in Section~\ref{sec:FormalityInPlane} by reviewing the notion of \textit{formal} and $k$-generated arrangements, following a presentation of Brandt and Terao~\cite{Brandt-Terao-1994} and Tohaneanu~\cite{Tohaneanu-K-Formal-07}.
 While nothing in this section is new, the underlying homological perspective underpins our approach.

In Section~\ref{sec:PerspectiveMatroidRepresentations} we introduce \textit{weak perspective representations of hyperplane arrangements up to rank }$k$. As we detail in Corollary~\ref{cor:FormalityAndParallelDrawings}, a hyperplane arrangement is formal
if and only if it has only trivial weak perspective representations up to rank $3$. 
This point of view is particularly rich for line arrangements.  In Section~\ref{sec:RigidityPlanarFrameworks} we recall some basics from planar rigidity theory. 
Proposition~\ref{prop:correspondences} describes the correspondence between 
non-trivial weak perspective representations of a line arrangement and non-trivial infinitesimal motions of a framework.  A wealth of interesting examples flow from this connection, which we begin to explore in Section~\ref{sec:ExtremalSyzygies}.

In Section~\ref{sec:ModuleDerivations} we recall the relationship between the module of logarithmic derivations of an arrangement and its Jacobian ideal in characteristic zero.  We introduce the $k$-saturation of the Jacobian ideal.  Following this, we show in Section~\ref{sec:SaturatingJacobian} that there is an explicit connection between weak perspective representations of a hyperplane arrangement up to rank $k+1$ and polynomials of minimal degree in the $k$-saturation of the Jacobian ideal.  This result, Theorem~\ref{thm:SaturationParallelDrawing}, is the main tool that we use to connect the algebra of the module of logarithmic derivations to weak perspective representations of a hyperplane arrangement.  As a corollary, we obtain a generalization (in characteristic zero) of Yuzvinsky's result that free arrangements are formal~\cite{Yuz93A}.  Namely, in Corollary~\ref{cor:MaxPDim} we show that if a hyperplane arrangement $\A$ admits non-trivial weak perspective representations up to rank $k+1$, then its module of logarithmic derivations has projective dimension at least $k-1$.

In Section~\ref{sec:ExtremalSyzygies} we specialize to the case of line arrangements, where we have our strongest results.  The results are primarily due to a duality for almost complete intersections of dimension one which we first saw in a paper of Hassanzadeh and Simis~\cite{Hassanzadeh-Simis-2012}, but which appeared in earlier work of Chardin~\cite{Chardin-2004}. Similar constructions were also considered by Pellikaan~\cite{Pellikaan-1988}.
This duality has been studied in several additional papers 
\cite{Sernesi-2014,Straten-Warmt-2015,Failla-Flores-Peterson-2021}. Applying this duality to the Jacobian ideal of a line arrangement yields a connection, detailed in Corollary~\ref{cor:DerivationSyzygies}, between polynomials of minimal degree in the saturation of the Jacobian of a line arrangement and syzygies of maximal degree in the module of derivations.  The further correspondence between weak perspective representations of line arrangements and infinitesimal rigidity of embedded graphs from Proposition~\ref{prop:correspondences} yields many line arrangements from rigidity theory whose module of logarithmic derivations exhibits a geometric sensitivity.  In Corollary~\ref{cor:DerivationRegularity} we observe that formal line arrangements with $n$ lines are characterized by possessing a module of logarithmic derivations with (Castelnuovo-Mumford) regularity strictly less than $n-2$.  As shown by Schenck~\cite{SchenckElementaryModifications03}, the maximum regularity of the module of logarithmic derivations of a line arrangement with $n$ lines is $n-2$.  Thus formal line arrangements are precisely those which fail to have maximum regularity.

We end the paper by indicating additional connections to the literature and posing a number of questions.  We highlight two of these.  

Question~\ref{ques:regularity} concerns extending the characterization of formality of line arrangements via regularity to higher dimensions.  A result of Derksen and Sidman~\cite{Derksen-Sidman-2004}, improved recently by Saito~\cite{saito2019degeneration}, shows that the regularity of the module of logarithmic derivations of a hyperplane arrangement with $n$ hyperplanes in $\bbP^\ell$ is at most $n-\ell$.
This bound is sharp for generic arrangements.  
We do not know if there is any connection between maximal regularity and formality in higher dimensions though this may be an interesting direction for future research.

We ask in Question~\ref{ques:BernsteinSato} if there is an explicit relation between formality of an arrangement and its \textit{Bernstein-Sato polynomial}. 
 In~\cite[Section~5.3]{Walther-2017}, Walther proves that the Bernstein-Sato polynomial of the two line arrangements $\A$ (six triple points not on a conic) and $\A'$ (six triple points on a conic) in Ziegler's pair are different.  Walther shows that the difference hinges on the existence of a polynomial of maximum possible degree in the saturation of the Jacobian of $\A'$. 
 In Theorem~\ref{thm:SaturationParallelDrawing} we show that this difference in the saturation is predicted by formality.
 It would be fascinating if there is a deeper connection between the Bernstein-Sato polynomial and the related ideas of formality, weak perspective representations, and rigidity for line arrangements.

\section{Formality of hyperplane arrangements}\label{sec:FormalityInPlane}
Let $V\cong\kk^{\ell+1}$, $\bbP^\ell=\bbP(V)$ be the projectivization of $V$, and $V^*$ be the dual vector space.  Define the map $\alpha:\{1,\ldots,n\}\rightarrow V^*$, and let $\alpha_i = \alpha(i)$.  The hyperplane arrangement defined by $\alpha$ is the union $\A=\cup_{i=1}^n H_i\subset \bbP^\ell$ of $n$ distinct hyperplanes in $\bbP^\ell$ with $H_i=\mathbb{V}(\alpha_i)$, where $\mathbb{V}(\alpha_i)$ denotes the zero locus of the linear form $\alpha_i$.  We write $\A(\alpha)$ when we wish to emphasize the choice of linear forms defining the hyperplanes.
In this section we show that $\A$ is formal if and only if the space of relations on $\alpha_1,\ldots,\alpha_n$ is generated by local relations, a condition that can be checked via homology.  This is well-known (see~\cite{Tohaneanu-K-Formal-07,Brandt-Terao-1994}), but serves as a useful introduction to our matroidal and homological perspective.

Let $M(\alpha) = [\alpha_1| \cdots | \alpha_n]$ be the $(\ell+1)\times n$ matrix whose columns are the coefficients of $\alpha_1,\ldots,\alpha_n$. Define the \textit{relation space} of $\A$ to be the kernel of $M(\alpha)$, 
\[
\cR(\A) = \{\bv \in \mathbb{K}^n : \; M(\alpha)\bv = \mathbf{0} \},
\]
and the \textit{length} of a relation $\bv\in\cR(\A)$ to be the number of non-zero entries in $\bv$.

\begin{defn}[\cite{FalkRandellHomotopy87}]\label{defn:formal}
A hyperplane arrangement is \textit{formal} if its relation space is generated by relations of length three.
\end{defn}

We define $\matroid(\alpha)$ to be the linear matroid on the columns of $M(\alpha)$.  This matroid contains the combinatorial data of the arrangement. We briefly recall the definitions from matroid theory \cite{Oxley} that we will use in what follows.  If $\mathcal{N}$ is a matroid with ground set $E$, and $X \subseteq E$, the \emph{rank} of $X$ is the cardinality of the largest independent set contained in $X$.  The \emph{closure} of $X$ is the maximal set $Y$ containing $X$ with $\rank Y = \rank X$.  A set is closed if it is equal to its closure, and the closed subsets of $E$ are the \emph{flats} of the matroid.  The \textit{lattice of flats} of $\mathcal{N}$ consists of the flats of $\mathcal{N}$, ordered with respect to reverse inclusion.  If $X$ is a flat of $\mathcal{N}$, then we write $n_{X}$ for the cardinality of $X$.

If $\A(\alpha)$ is a hyperplane arrangement, then we can give a geometric interpretation of the matroid definitions.  In particular, the flats of the matroid $\matroid(\alpha)$ are also flats in the intersection lattice of the arrangement $\A(\alpha)$.  If $X = \{\alpha_{i_1}, \ldots, \alpha_{i_{n_X}}\}$ is a flat of $\matroid(\alpha)$, then $\mathbb{V}(\alpha_{i_1}, \ldots, \alpha_{i_{n_X}}) \subset \bbP^{\ell}$ is a flat of $\A(\alpha)$, and we denote both flats by $X$ so that $\rank X = \codim X.$  If $\rank(X)=\ell$, then $X$ is a point in $\mathbb{P}^\ell$ and we write $p$ instead of $X$.  We denote the lattice of flats of $\matroid(\alpha)$ by $L(\A)$, and write $L_k(\A)$ for the flats of rank $k$. 
The hyperplane arrangment $\A$ is called \textit{essential} if $\rank(\A)=\ell+1$.  That is, $\cap_{i=1}^n H_i=\emptyset\subset\bbP^\ell$, which we will say has codimension $\ell+1$.  If $\A$ is not essential, we will use the convention that if $\rank(\A)\le k\le \ell+1$, then $L_{k}(\A)=L_{\rank(\A)}(\A)$. 

Let $X$ be a flat of $\matroid(\alpha)$.  We write $\alpha_X$ for the restriction of $\alpha$ to $\{j : \alpha_j \in X\}$ and we call $\A_X = \A(\alpha_X)$ the \textit{localization} of $\A$ at $X$.
We define $\cR_X(\A) \subseteq \cR(\A)$ to be the subspace of linear relations on the $n_X$ forms in $X$.  $\cR_X(\A)$ is isomorphic to $\cR(\A_X)$ via the natural inclusion of $\cR(\A_X)$ into $\cR(\A)$. Theorem \ref{thm:local} characterizes formal arrangements as those where the `local' relations around codimension two flats $X$ with $n_X \geq 3$ generate the entire relation space of $\A$ (see~\cite[Remark~3.5]{Brandt-Terao-1994}).

\begin{thm}\label{thm:local}
An arrangement $\A$ is formal if and only if 
$\displaystyle{\sum_{X\in L_2(\A)} \cR_X(\A)=\cR(\A)}$.
\end{thm}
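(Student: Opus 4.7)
My plan is to exploit a geometric characterization of length-three relations. A vector $\bv\in\cR(\A)$ of length three corresponds to three distinct forms $\alpha_i,\alpha_j,\alpha_k$ which are linearly dependent. Since any two of these forms are linearly independent (as they define distinct hyperplanes), the three together span a $2$-dimensional subspace of $V^*$, so $H_i\cap H_j\cap H_k$ is a flat of codimension two, i.e.\ a flat $X\in L_2(\A)$ with $n_X\ge 3$. Conversely, any three hyperplanes containing a common codimension-two flat give a length-three relation. Thus every length-three relation lies in $\cR_X(\A)$ for some $X\in L_2(\A)$, which immediately yields the forward direction: if $\A$ is formal then $\cR(\A)$ is spanned by length-three relations, and hence $\cR(\A)\subseteq \sum_{X\in L_2(\A)}\cR_X(\A)\subseteq \cR(\A)$.

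For the reverse direction, I would prove that for each $X\in L_2(\A)$ the local relation space $\cR_X(\A)$ is itself spanned by length-three relations. Fix $X\in L_2(\A)$ and list the forms through $X$ as $\alpha_{i_1},\ldots,\alpha_{i_{n_X}}$. These all vanish on a codimension-two flat, so they span a $2$-dimensional subspace of $V^*$; in particular, if $n_X\le 2$ then $\cR_X(\A)=0$ and there is nothing to do. Otherwise, picking two linearly independent forms, say $\alpha_{i_1}$ and $\alpha_{i_2}$, to serve as a basis for this span, each remaining form $\alpha_{i_k}$ admits a unique expansion $\alpha_{i_k}=a_k\alpha_{i_1}+b_k\alpha_{i_2}$, with $a_k,b_k\ne 0$ (else two of the three forms would be proportional, contradicting distinctness of hyperplanes). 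This yields a length-three relation in $\cR_X(\A)$ for each $k\ge 3$. These $n_X-2$ relations are linearly independent (each uses a distinct coordinate $\alpha_{i_k}$) and their count matches $\dim\cR_X(\A)=n_X-\rank\{\alpha_{i_1},\ldots,\alpha_{i_{n_X}}\}=n_X-2$, so they form a basis.

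Putting the two pieces together, the subspace of $\cR(\A)$ generated by length-three relations is exactly $\sum_{X\in L_2(\A)}\cR_X(\A)$, and formality is by definition the assertion that this subspace equals $\cR(\A)$. I do not expect a serious obstacle: the only subtlety is keeping straight the distinction between length-exactly-three relations and relations supported in a single $\cR_X(\A)$ (which may have length larger than three when $n_X>3$), and this is settled by the explicit basis construction above.
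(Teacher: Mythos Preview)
Your proposal is correct and follows essentially the same approach as the paper: both directions hinge on the observation that each $\cR_X(\A)$ for $X\in L_2(\A)$ has a basis of length-three relations, obtained by expressing each of the remaining $n_X-2$ forms in terms of two independent ones (the paper phrases this via reduced row echelon form, you via an explicit choice of basis). Your closing remark that the span of all length-three relations is exactly $\sum_{X\in L_2(\A)}\cR_X(\A)$ is a clean way to package both directions at once.
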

\begin{proof}
Suppose that $\A$ is formal and $\bv \in \mathbb{K}^n$ is a relation of length three on $\alpha_i, \alpha_j,$ and $\alpha_k$.  Then $X=\mathbb{V}(\alpha_i,\alpha_j,\alpha_k)$ has codimension two and $\bv \in \cR_X(\A)$.  Since $\A$ is formal, $\cR(\A)$ is generated by relations of length three and we conclude that $\cR(\A)  \subseteq \sum_{X\in L_2(\A)} \cR_X(\A).$

Now assume that $\sum_{X\in L_2(\A)} \cR_X(\A)=\cR(\A)$.  It suffices to show that $\cR_X(\A)$ is generated by relations of length three whenever $X\in L_2(\A)$.  Let $M(\alpha_X)$ be the $(\ell+1) \times n_X$ matrix whose columns correspond to $\alpha_X$.  Since $X$ has rank two, the reduced row echelon form of $M(\alpha_X)$ has two pivot columns -- without loss of generality suppose these are the first two columns.  Then a basis for $\cR_X(\A)$, which is the kernel of $M(\alpha_X)$, is given by the relations among $\alpha_1,\alpha_2,$ and $\alpha_j$ for $j=3,\ldots,n_X$.  Thus $\A$ is formal.
\end{proof}

As Tohaneanu noticed in~\cite{Tohaneanu-K-Formal-07}, we can re-cast formality as a homological condition as follows.  For each $X\in L_2(\A)$
choose a basis for $\cR_X(\A)$ consisting of relations of length three, and define $N_X$ to be the $n \times (n_X-2)$ matrix with these vectors as columns.

\begin{cor}\label{cor:HomologicalCriterionFormality}
The arrangement $\A$ is formal if and only if the kernel of $M(\alpha)$ coincides with the image of $\oplus N_X$, which is to say that the chain complex
\begin{equation}\label{eq:formalComplex}
\bigoplus_{X\in L_2(\A)}\kk^{n_X-2}\xrightarrow{\oplus N_X} \kk^n\xrightarrow{M(\alpha)}\kk^{\ell+1}
\end{equation}
is exact.
\end{cor}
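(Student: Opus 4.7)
The plan is to translate the statement of Theorem~\ref{thm:local} into the homological language of the complex in~(\ref{eq:formalComplex}) by identifying the kernel of $M(\alpha)$ and the image of $\oplus N_X$ with the objects that appear there. The core observation is that exactness of (\ref{eq:formalComplex}) at the middle term $\kk^n$ is, by definition, the assertion $\ker M(\alpha) = \im(\oplus N_X)$, and the two sides of this equality will be identified respectively with $\cR(\A)$ and $\sum_{X\in L_2(\A)}\cR_X(\A)$. Once this is done, Theorem~\ref{thm:local} immediately yields the ``if and only if''.

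Three small verifications suffice. First, by the very definition of the relation space, $\ker M(\alpha)=\cR(\A)$. Second, each column of $N_X$ is, by construction, a length-three element of $\cR_X(\A)\subseteq \cR(\A)=\ker M(\alpha)$, so $M(\alpha)\circ(\oplus N_X)=0$ and (\ref{eq:formalComplex}) is genuinely a complex. Third, since the columns of $N_X$ are chosen to form a basis of $\cR_X(\A)$, one has $\im(N_X)=\cR_X(\A)$, and hence $\im(\oplus N_X)=\sum_{X\in L_2(\A)}\cR_X(\A)$ as subspaces of $\kk^n$. Combining these three facts with Theorem~\ref{thm:local} delivers the claim.

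The only point that requires a moment of care, rather than any real obstacle, is legitimizing the definition of $N_X$: one must know that $\cR_X(\A)$ is $(n_X-2)$-dimensional and actually admits a basis of length-three relations. Both facts are already extracted in the second half of the proof of Theorem~\ref{thm:local}. Since $X\in L_2(\A)$ is a rank-two flat, the matrix $M(\alpha_X)$ has rank two, so $\dim\cR_X(\A)=n_X-2$ by rank–nullity; and after putting $M(\alpha_X)$ in reduced row echelon form with, say, the first two columns as pivots, the $n_X-2$ length-three vectors encoding the relations among $\alpha_1,\alpha_2,\alpha_j$ for $j=3,\ldots,n_X$ provide the required basis. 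With this bookkeeping in place, the corollary is essentially a restatement of Theorem~\ref{thm:local}.
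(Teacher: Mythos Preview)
Your proof is correct and follows essentially the same approach as the paper's: identify $\ker M(\alpha)$ with $\cR(\A)$ and $\im(\oplus N_X)$ with $\sum_{X\in L_2(\A)}\cR_X(\A)$, then invoke Theorem~\ref{thm:local}. You include slightly more detail (verifying that~\eqref{eq:formalComplex} is a complex and that $N_X$ is well-defined), but the argument is the same.
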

\begin{proof}
Note that $\sum \cR_X(\A)$ can be identified with the column space of the $n\times \sum_X (n_X-2)$ matrix $\oplus N_X$ formed by concatenating the matrices $N_X$ according to an arbitrary ordering on the flats $X\in L_2(\A)$.  We thus see that $\sum\cR_X(\A)$ is the image of the linear map
\[
\oplus N_X:\bigoplus_X\kk^{n_X-2}\rightarrow \kk^n
\]
while $\cR(\A)$ is the kernel of the map given by the coefficient matrix
\[
M(\alpha):\kk^n\to \kk^{\ell+1}.
\]
Now the result follows immediately from Theorem \ref{thm:local}.
\end{proof}

\begin{remark}\label{rem:kgenerated}
There is a straightforward generalization of formality which we will use in Section~\ref{sec:PerspectiveMatroidRepresentations}.
In~\cite{Yuz93A}, Yuzvinsky calls an arrangement \textit{$k$-generated} (for $k\ge 3$) if $\cR(\A)$ is generated by relations of length $k$ (hence formal arrangements are $3$-generated).  Theorem~\ref{thm:local} generalizes to yield that $\A$ is $k$-generated if and only if $\sum_{X\in L_{k-1}(\A)} \cR_X(\A)=\cR(\A)$.  Likewise, we may reformulate Corollary~\ref{cor:HomologicalCriterionFormality} to give a homological criterion for an arrangement to be $k$-generated.  We need only let the summation run over the flats $X\in L_{k-1}(\A)$ and the matrix $N_X$ becomes an $n\times (n_X-\rank(X))$ matrix whose columns record the relations of length $\rank(X)+1$ among the $n_X$ forms defining $X$.
\end{remark}

We apply the ideas of this section to show that the line arrangement $D_3$ is formal. 
\begin{exm}\label{ex:formal2}
The $D_3$ line arrangement 
is defined as the zero locus of the linear forms
\[
x-y,\; x+y,\; x-z,\; x+z,\; y-z,\; y+z.
\]
The four triple points of this arrangement are listed in Figure~\ref{fig:TriplePoint}.

\begin{figure}[h!t]
\begin{floatrow}
{
\begin{tikzpicture}[scale=1]
\tkzDefPoints{0/0/A, 2/0/B, 2/2/C, 0/2/D}
\tkzDrawLine(A,B);
\tkzDrawLine(B,C);
\tkzDrawLine(C,D);
\tkzDrawLine(A,D);
\tkzDrawLine(A,C);
\tkzDrawLine(B,D);

\end{tikzpicture}
}
\hspace{10 pt}
\capbtabbox{%
\raisebox{50 pt}{
\begin{tabular}{l|l}
Triple point & \parbox{80 pt}{Forms vanishing on triple point}\\
\hline
$p_1=[1:1:1]$ & $x-y,\; x-z,\; y-z$\\

$p_2=[1:-1:1]$ & $x+y,\; x-z,\; y+z$\\

$p_3=[-1:1:1]$ & $x+y,\; x+z,\; y-z$\\

$p_4=[-1:-1:1]$ & $x-y,\; x+z,\; y+z$ \\
\end{tabular}
}
}{
}
\end{floatrow}
\caption{The line arrangement $D_3$ and its triple points.}
\label{fig:TriplePoint}
\end{figure}
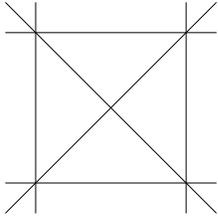

The relation spaces of $\A$ at $p_1,p_2,p_3,$ and $p_4$ are all one-dimensional, and the matrix for $\oplus_{i=1}^4 N_{p_i}$ is
\[
\bigoplus\limits_{i=1}^4 N_{p_i}=
\left[ \begin{array}{rrrr}
1 & 0 & 0 & 1\\
0 & 1 & 1 & 0\\
-1&-1 & 0 & 0\\
0 & 0 &-1 &-1\\
1 & 0 &-1 & 0\\
0 &-1 & 0 & 1 
\end{array} \right]
\]

Hence the chain complex~\eqref{eq:formalComplex} takes the form
\[
\kk^4\xrightarrow{ 
\left[ \begin{array}{rrrr}
1 & 0 & 0 & 1\\
0 & 1 & 1 & 0\\
-1&-1 & 0 & 0\\
0 & 0 &-1 &-1\\
1 & 0 &-1 & 0\\
0 &-1 & 0 & 1 
\end{array} \right]} \kk^6 \xrightarrow{	\left[ \begin{array}{rrrrrr}
		1 & 1 & 1 & 1 & 0 & 0\\
	   -1 & 1 & 0 & 0 & 1 & 1\\
		0 & 0 &-1 & 1 &-1 & 1
	\end{array} \right]} \kk^3.
\]
The exactness of this chain complex at the middle term can be easily checked by rank computation -- both matrices are rank 3 --  so $\A$ is formal.
\end{exm}

The  homological condition gives an algorithmic way to check formality on a computer algebra system such as Macaulay2~\cite{M2}.  

One may well ask if the homology of the chain complex~\eqref{eq:formalComplex} at the middle term has a geometric meaning.  In the next section we show that this homology encodes \textit{weak} \textit{perspective} representations of the matroid $\matroid(\alpha)$.

\section{$\wrep{}$s of matroids}\label{sec:PerspectiveMatroidRepresentations}
In Section~\ref{sec:FormalityInPlane} we saw that formal and $k$-generated arrangements could be described by the vanishing of the homology at the middle term in a chain complex of length three.  In this section we attach geometric meaning to this homology (more precisely, its dual).  We convey the intuition before diving into the details.

Suppose $\A=\A(\alpha)=\cup_{i=1}^n H_i\subset\bbP^\ell$ and let $\mathbf{\lambda}=(\lambda_i)_{i=1}^n\in \kk^{n}$ be a vector satisfying all the dependencies of length $k$ that the linear forms $\alpha_1,\ldots,\alpha_n$ satisfy.  That is, if $\mathbf{v}\in \cR_X(\A)$ (defined just prior to Theorem~\ref{thm:local}) for some $X\in L_{k-1}(\A)$, then $\mathbf{v}\cdot \mathbf{\lambda}=0$, where $\cdot$ is the usual dot product.  Choose a linear form $\alpha_0$ (we make restrictions on this linear form later).  We can use $\alpha_0$ and the vector $\lambda$ to produce a new collection of linear forms $\beta_i=\alpha_i+\lambda_i\alpha_0$ and thus another hyperplane arrangement $\A'(\beta)$.  Notice that if $H_0=\mathbb{V}(\alpha_0)$, then $\A$ and $\A'$ have the same \textit{restriction} to $H_0$ (that is, the intersections of the hyperplanes of $\A$ and $\A'$ with $H_0$ agree).  In this case, $\A$ and $\A'$ are \textit{perspective from} $H_0$ as in Definition~\ref{defn:perspective}.

With $\beta$ defined as in the previous paragraph, how is the matroid $\matroid(\beta)$ related to $\matroid(\alpha)$?  What does it mean, in terms of $\matroid(\beta)$, if $\lambda$ is in $\im M(\alpha)^T$?  
As long as $\alpha_0$ is sufficiently general, $\beta$ is what we will call a
\textit{weak representation of} $\matroid(\alpha)$ \textit{up to rank $k$}.  If $\lambda$ is in
$\im M(\alpha)^T$ then we will say that $\beta$ is a \textit{trivial} weak representation of $\matroid(\alpha)$.  In the remainder of the section we begin by developing these notions, making the connection with $k$-generated arrangements at the end.  Our development is inspired
by Whiteley's parallel drawings of $d$-scenes in~\cite[Chapter~61]{DiscCompGeom18} (see also~\cite{Whiteley87,Whiteley97,NixonSchulzeWhiteley22}), and our goal is to make connections to other matroids in discrete geometry where there are established tools for studying `special positions' that are relevant for Terao's conjecture.  For instance, in Section~\ref{sec:RigidityPlanarFrameworks} we show that $\wrep{}$s of line arrangements are (very) closely tied to infinitesimal motions of planar frameworks.  Special positions for infinitesimal motions are highly studied in rigidity theory~\cite{WhiteWhiteleyAlgGeoStresses}.
 
 Recall that if $\matroid$ and $\mathcal{N}$ are matroids on the same ground set $\{1,\ldots,n\}$, then $\mathcal{N} \geq \matroid$ in the weak order if every dependent set in $\mathcal{N}$ is also dependent in $\matroid$ (or equivalently, every independent set in $\mathcal{M}$ is independent in $\mathcal{N}$).  
For any $1\le k\le \rank(\matroid)$, the \textit{rank} $k$ \textit{truncation} of a matroid $\matroid$, denoted $\matroid^{[k]}$, is the matroid (of rank $k$) on the same ground set as $\matroid$ whose independent sets are precisely the independent sets of $\matroid$ that have rank at most $k$.  If $\matroid^{[k]}=\mathcal{N}^{[k]}$, then $\matroid$ and $\mathcal{N}$ have the same flats of rank \textit{at most } $k-1$, but
may have different flats of rank $k$ and higher.  See Example~\ref{ex:matroid reps}, where two arrangements have the same rank $2$ truncation but different flats of rank $2$.

\begin{defn}\label{def:weak rep}
Let $\mathcal{N}$ be a matroid on the ground set $\{1,\ldots,n\}$. 
The map $\alpha:\{1,\ldots,n\}\to V^*$ is a \textit{representation} of $\mathcal{N}$ \textit{up to rank } $k$ if $\mathcal{N}^{[k]}=\matroid(\alpha)^{[k]}$ and a \textit{weak representation} of $\mathcal{N}$ \textit{up to rank } $k$ if $\mathcal{N}^{[k]}\ge \matroid(\alpha)^{[k]}$.  We call the hyperplane arrangement $\A(\alpha)$ a (weak) representation of $\mathcal{N}$ up to rank $k$ if $\alpha$ is a (weak) representation of $\mathcal{N}$ up to rank $k$.  If $k = \rank \mathcal{N}$, then we say that $\alpha$ is a (weak) representation of $\mathcal{N}$, accordingly.
\end{defn}

\begin{exm}\label{ex:matroid reps}
Let $\mathcal{N}$ be the matroid on $\{1,2,3\}$ in which every subset is independent.  Define an arrangement $\A$ by the map $\alpha$ where $\alpha_1 = x, \alpha_2=y,$ and $\alpha_3=x+y+z$. We see that $\matroid(\alpha) = \mathcal{N}$, and $\alpha$ is a representation of $\mathcal{N}$.

Let $\A'$ be defined by $\beta$ with $\beta_1=x, \beta_2 =y,$ and $\beta_3=x+y$. The three lines of $\A'$ pass through a common point, which corresponds to  the union of three  rank 2 flats in $\mathcal{N}$.  In this case, $\beta$ is a weak representation of $\mathcal{N}$.  The map $\beta$ is also a representation of $\mathcal{N}$ up to rank 2 as $\mathcal{N}^{[2]} = \matroid(\beta)^{[2]}$.   Both of these arrangements are depicted in Figure~\ref{fig: threeLines}.

\begin{figure}[ht]
    \centering
    \includegraphics[width =8cm]{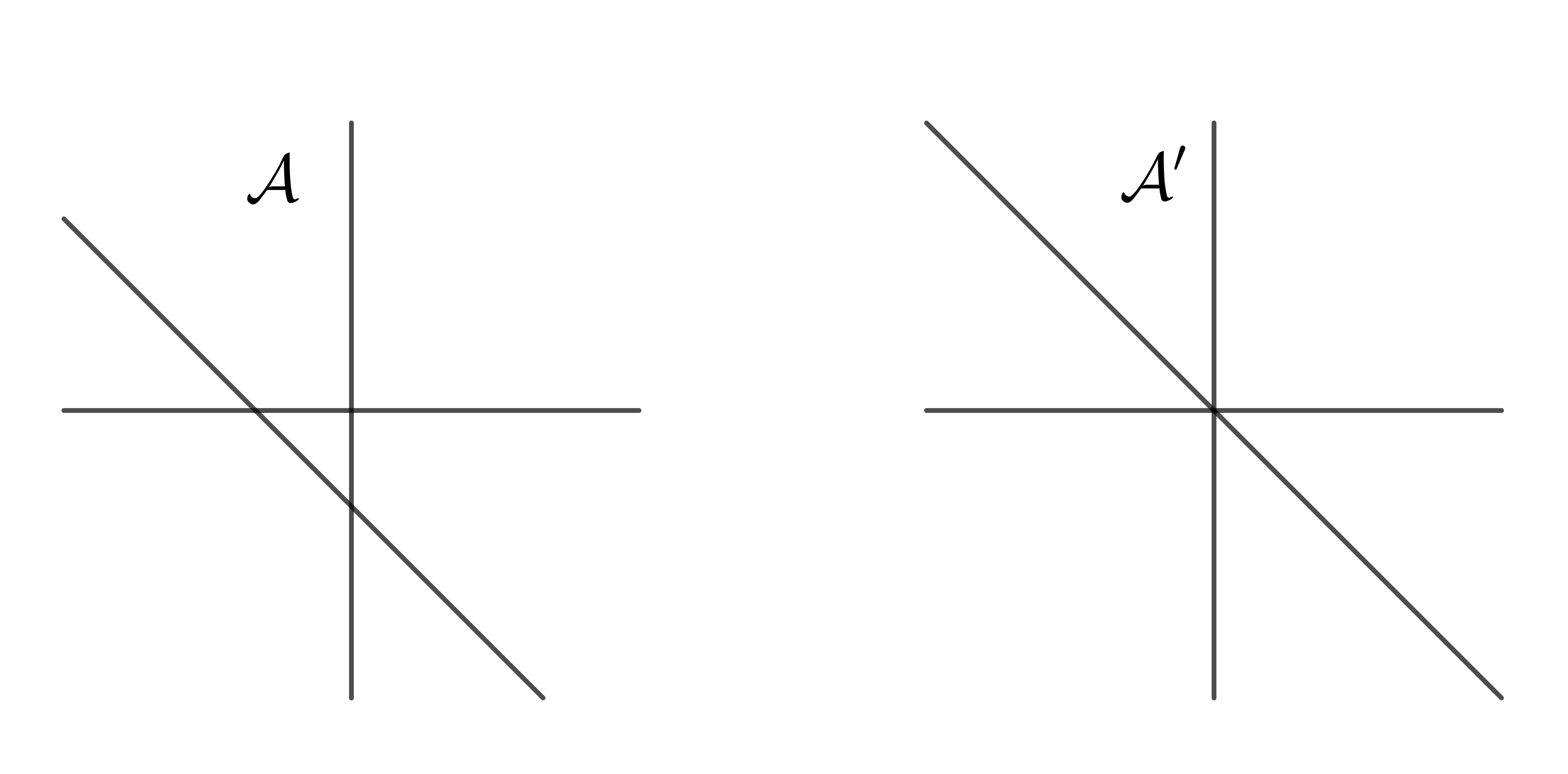}
    \caption{Arrangements giving a representation and a weak representation of a matroid on three elements in which every subset is independent.}
    \label{fig: threeLines}
\end{figure}

\end{exm}

Since the flats of $\matroid(\alpha)$ correspond to the hyperplanes of $\A(\alpha)$ and their intersections, if $\beta$ is a weak representation of $\matroid(\alpha)$, then the correspondence $\alpha_i \mapsto \beta_i$ induces a surjective map on the flats of the matroids.  This map need not be injective, as Example~\ref{ex:matroid reps} shows.

\begin{lem}\label{thm: flat map}
If $\beta$ is a weak representation of
$\matroid(\alpha)$ up to rank $k\geq 3$, then the correspondence $\alpha_i \mapsto \beta_i$ induces a surjective map from the flats of $\matroid(\alpha)^{[k]}$ to the flats of $\matroid(\beta)^{[k]}$ that takes flats of rank 2 to flats of rank 2.  In general, the rank of a flat can drop under this map, but not increase.
\end{lem}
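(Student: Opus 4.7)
The plan is to define the map as a closure map: set $\phi\colon L(\matroid(\alpha)^{[k]}) \to L(\matroid(\beta)^{[k]})$ by $\phi(X) = \cl_{\matroid(\beta)^{[k]}}(X)$, where each flat is identified with its underlying subset of the common ground set $\{1,\ldots,n\}$. I would then establish, in order, the rank inequality, the surjectivity of $\phi$, and rank-$2$ preservation.

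The rank inequality unpacks directly from the definition of weak order. The condition $\matroid(\alpha)^{[k]} \ge \matroid(\beta)^{[k]}$ says every set independent in $\matroid(\beta)^{[k]}$ is independent in $\matroid(\alpha)^{[k]}$, so lifting a $\matroid(\beta)^{[k]}$-basis of any $S\subseteq\{1,\ldots,n\}$ gives $\rank_{\matroid(\beta)^{[k]}}(S) \le \rank_{\matroid(\alpha)^{[k]}}(S)$. In particular $\rank_{\matroid(\beta)^{[k]}}(\phi(X)) = \rank_{\matroid(\beta)^{[k]}}(X) \le \rank_{\matroid(\alpha)^{[k]}}(X)$, so $\phi$ does not increase rank.

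For surjectivity, given a flat $Y$ of $\matroid(\beta)^{[k]}$, I would pick a basis $T \subseteq Y$ of $Y$ in $\matroid(\beta)^{[k]}$; weak order makes $T$ independent in $\matroid(\alpha)^{[k]}$, so $X = \cl_{\matroid(\alpha)^{[k]}}(T)$ is a well-defined flat of $\matroid(\alpha)^{[k]}$. For each $i \in X$, the set $T \cup \{i\}$ is dependent in $\matroid(\alpha)^{[k]}$, hence (by weak order) dependent in $\matroid(\beta)^{[k]}$; since $T$ is still independent in $\matroid(\beta)^{[k]}$, this forces $i \in \cl_{\matroid(\beta)^{[k]}}(T)=Y$. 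Thus $X \subseteq Y$, whence $\phi(X) \subseteq Y$, while the inclusion $T \subseteq X$ yields the reverse containment $Y = \cl_{\matroid(\beta)^{[k]}}(T) \subseteq \phi(X)$, so $\phi(X) = Y$.

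Finally, for rank-$2$ preservation, take a rank-$2$ flat $X$ of $\matroid(\alpha)^{[k]}$; it contains $i,j$ with $\alpha_i,\alpha_j$ linearly independent, and since the hyperplanes of $\A(\beta)$ are distinct the forms $\beta_i,\beta_j$ are also non-proportional, giving $\rank_{\matroid(\beta)^{[k]}}(\phi(X))\ge 2$, which combined with the rank inequality forces equality. The step I expect to be the main obstacle is the surjectivity argument, because the apparent preimage $\cl_{\matroid(\alpha)^{[k]}}(Y)$ can be strictly larger than $Y$ (a freer matroid $\alpha$ absorbs more elements into its closure of an $\alpha$-independent span); the correct preimage is instead the $\matroid(\alpha)^{[k]}$-closure of a $\matroid(\beta)^{[k]}$-basis of $Y$, and the weak-order hypothesis is precisely what prevents this closure from escaping $Y$ when one applies $\phi$.
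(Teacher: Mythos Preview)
Your proof is correct and follows essentially the same approach as the paper: both define the map via closure in $\matroid(\beta)^{[k]}$, prove surjectivity by taking a $\matroid(\beta)^{[k]}$-basis $T$ of $Y$, forming $X=\cl_{\matroid(\alpha)^{[k]}}(T)$, and using weak order to show $X\subseteq Y$, and both derive rank-$2$ preservation from the distinctness of the hyperplanes $\mathbb{V}(\beta_i)$. The only cosmetic difference is that the paper explicitly splits off the trivial case $i\in T$ before asserting that $T\cup\{i\}$ is dependent, whereas you implicitly fold it in; you may wish to add a word there for completeness.
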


\begin{remark}\label{rem:kge3}
Hereafter we will usually make the restriction that whenever $\beta:\{1,\ldots,n\}\to V^*$ is a weak representation of $\matroid(\alpha)$ up to rank $k$, then $k\ge 3$.  The reason for this is partly due to our use of Lemma~\ref{thm: flat map}.  However, if $\beta$ is only a weak representation of $\matroid(\alpha)$ up to rank $2$, then the only restriction on $\beta$ is that it is an injective map and so represents a hyperplane arrangement of $n$ hyperplanes.  Such representations typically do not preserve enough information to be useful.  
\end{remark}

\begin{proof}[Proof of Lemma~\ref{thm: flat map}]
The correspondence $\alpha_i \mapsto \beta_i$ sends the flat $X = \{\alpha_{i_1}, \ldots, \alpha_{i_t}\}$ to the flat $\cl(\beta_{i_1}, \ldots, \beta_{i_t})$. To see that this map is surjective, let $Y$ be a flat of $\matroid(\beta)^{[k]}$ of rank $r$.  Then there exist independent $\beta_{i_1}, \ldots, \beta_{i_r} \in Y$, and since every independent subset of $\matroid(\beta)^{[k]}$ is independent in $\matroid(\alpha)^{[k]}$, $X = \cl(\alpha_{i_1}, \ldots, \alpha_{i_r})$ is a flat of rank $r$. We will show that if $\alpha_j \in X$, then $\beta_j \in Y$.

If $\alpha_j \in \{\alpha_{i_1}, \ldots, \alpha_{i_r}\}$, we are done, so assume that $\alpha_j \notin \{\alpha_{i_1}, \ldots, \alpha_{i_r}\}$. Since $\{\alpha_{i_1}, \ldots, \alpha_{i_r}, \alpha_j\}$ has rank $r$ it must be dependent.  Since every dependent set in
$\matroid(\alpha)^{[k]}$ is dependent in $\matroid(\beta)^{[k]}$, $\{\beta_{i_1}, \ldots, \beta_{i_r}, \beta_j\}$ is dependent, which implies that $\beta_j \in \cl(\beta_{i_1}, \ldots, \beta_{i_r}) = Y$.

Note that if $\rank(X) = 2,$ our assumption that $\beta$ defines a hyperplane arrangement (so that the $\mathbb{V}({\beta_i})$ are distinct) 
and that $k \geq 3$ 
implies that two independent elements of $X$ map to independent elements in $\matroid(\beta)^{[k]}$.  The final statement that the rank of a flat cannot increase under this map follows directly from the fact that $\beta$ is a weak representation of $\matroid(\alpha)$ up to rank $k$.
\end{proof}

Given a hyperplane arrangement $\A=\cup_{i=1}^n H_i\subset\bbP^\ell$ and a hyperplane $H_0\notin\A$, the \textit{restriction} of $\A$ to $H_0$ is the hyperplane arrangement, in $H_0\cong \bbP^{\ell-1}$, defined as
\[
\A|_{H_0}=\cup_{i=1}^n (H_i\cap H_0).
\]
\begin{defn}\label{defn:perspective}
   Given two hyperplane arrangements $\A(\alpha)$ and $\A'(\beta)$ so that $H_0=\mathbb{V}(\alpha_0)$ is in neither arrangement, we say that $\A$ and $\A'$ are \textit{perspective from} $H_0$ if $\A|_{H_0}=\A'|_{H_0}$.  We also say that the maps $\alpha$ and $\beta$ are perspective from $\alpha_0$. 
\end{defn}

\begin{remark}\label{rem:Parallel}
Our intuition for perspective hyperplane arrangements comes from the situation where $H_0$ is the hyperplane at infinity, in which case the hyperplanes in arrangements that are perspective from $H_0$ are parallel.  Although the theory of parallel drawings of $d$-scenes in \cite[Chapter~61]{DiscCompGeom18} guided much of our thinking, we found that the combinatorial structure given by the matroid of an arrangement was a more natural language for our needs than that of $d$-scenes.

We use the word \textit{perspective} instead of \textit{parallel} hyperplane arrangements because $H_0$ may not always be the `hyperplane at infinity,' and in this case \textit{perspective} matches the terminology used in classical theorems of projective geometry.
\end{remark}

\begin{exm}\label{exm: standard scene}
The line arrangements in Figure~\ref{fig: threeLines} from Example~\ref{ex:matroid reps} are perspective from the line at infinity.

\end{exm}

\begin{exm}\label{exm: desargues}
Figure~\ref{fig:desargues} depicts two arrangements (one solid and the other dashed) that are representations of the matroid $\mathcal{N}$ from Example~\ref{ex:matroid reps} and perspective from the dotted line $H_0$. Since $H_0$ is not at infinity, the corresponding lines in these arrangements are not parallel.

\begin{figure}[ht]
    \centering
    \includegraphics[width =8cm]{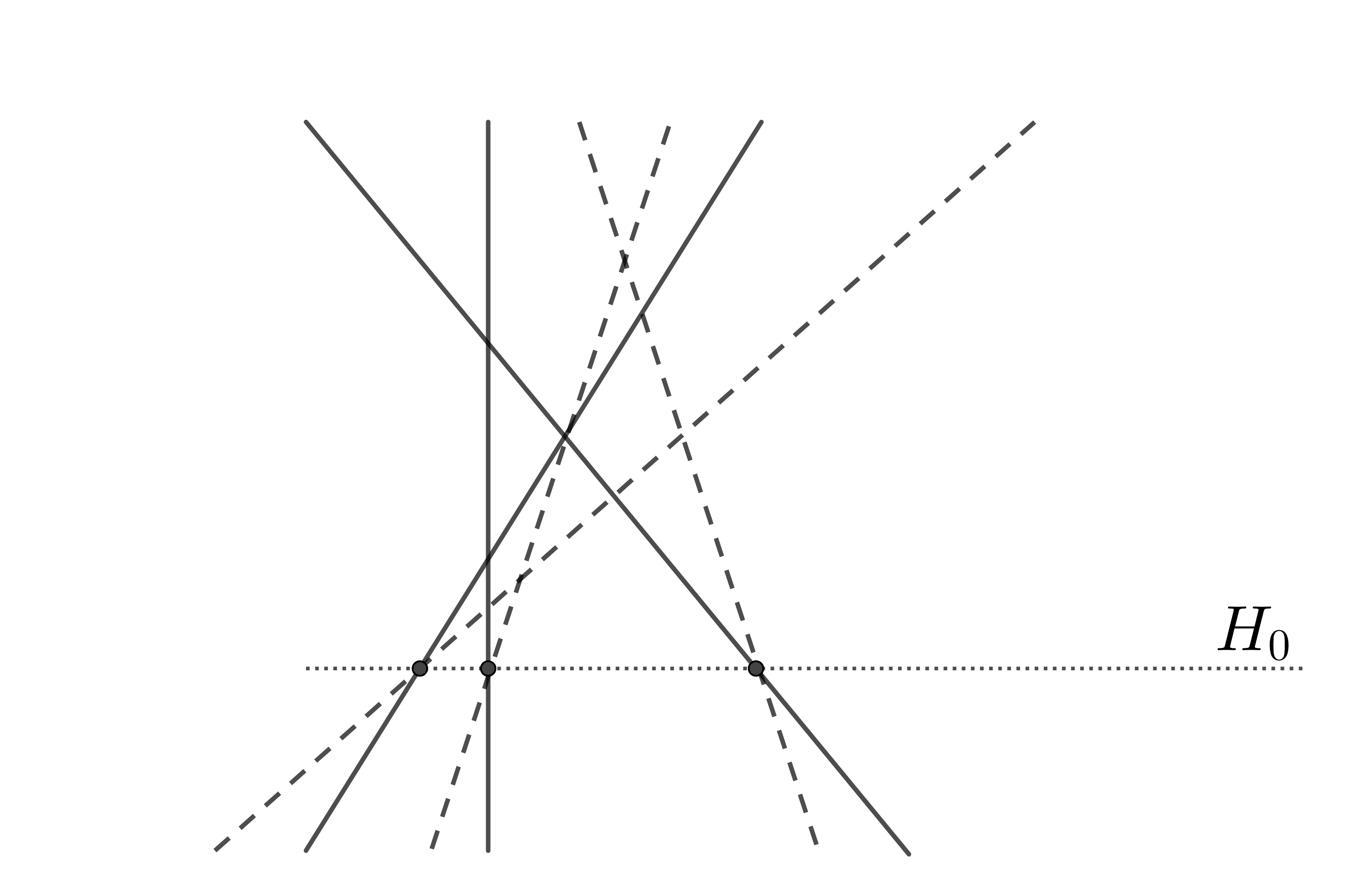}
    \caption{Two arrangements perspective from a general $H_0$.}
    \label{fig:desargues}
\end{figure}

We can show that if we perturb one of the line arrangements in Figure~\ref{fig:desargues}, then the two line arrangements will not be perspective from any line.
Observe that the three solid and three dashed lines in Figure~\ref{fig:desargues} each enclose a triangle.  Desargues' Theorem from projective geometry states that two triangles are perspective from a line if and only if they are perspective from a point.  That is, if we take the three lines through corresponding vertices in the dashed triangle and the solid triangle, they will pass through a point.  If we slightly perturb a vertex of either the solid or dashed triangle, the triangles will not be perspective from any point, and thus the two line arrangements will not be perspective from any line.
\end{exm}

Given a hyperplane arrangement $\A(\alpha)$ and a hyperplane of perspectivity $H_0\notin\A$, our objective is to determine all arrangements $\A'(\beta)$ perspective to $\A(\alpha)$
from $H_0$ which are weak representations of $\matroid(\alpha)$ up to rank $k$.  We accomplish this in Lemma~\ref{lem:Directions} and Proposition~\ref{prop:ParallelEquations}.

\begin{lem}\label{lem:Directions}
Suppose $\A'(\beta)$ is a hyperplane arrangement that is perspective to $\A(\alpha)$ from the hyperplane $H_0=\mathbb{V}(\alpha_0)\notin \A$.  Then we may assume $\beta_j=\alpha_j+\lambda_j\alpha_0$ is satisfied for some constant $\lambda_j\in\kk$ for $j=1,\ldots,n$.
\end{lem}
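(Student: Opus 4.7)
The plan is to convert the geometric condition of being perspective from $H_0$ into a statement about the images of the $\beta_j$ modulo $\alpha_0$, and then use the freedom to rescale each $\beta_j$ (which does not change the hyperplane it defines).

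First, I would fix $j \in \{1, \ldots, n\}$ and observe that the hypothesis $\A|_{H_0} = \A'|_{H_0}$ forces $\mathbb{V}(\alpha_j) \cap H_0 = \mathbb{V}(\beta_j) \cap H_0$ inside $H_0 \cong \bbP^{\ell - 1}$. The space of linear forms on $H_0$ is naturally identified with the quotient $V^*/\kk\alpha_0$, so the images $\overline{\alpha_j}$ and $\overline{\beta_j}$ in this quotient cut out the same hyperplane of $H_0$. Since $H_0 \notin \A \cup \A'$, neither image is zero, and therefore $\overline{\beta_j} = c_j \overline{\alpha_j}$ for some $c_j \in \kk^\times$.

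Lifting this to $V^*$ gives $\beta_j = c_j \alpha_j + \mu_j \alpha_0$ for some $\mu_j \in \kk$. Since $\mathbb{V}(\beta_j) = \mathbb{V}(c_j^{-1} \beta_j)$, I may replace $\beta_j$ by $c_j^{-1}\beta_j$ without altering the arrangement $\A'$; setting $\lambda_j = c_j^{-1}\mu_j$ then yields the desired normalization $\beta_j = \alpha_j + \lambda_j \alpha_0$. Performing this rescaling independently for each $j$ finishes the argument.

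The only delicate point is identifying the restriction map with the projection $V^* \to V^*/\kk\alpha_0$, which is standard, and recognizing that the phrase ``we may assume'' in the statement licenses the rescaling of each $\beta_j$ --- without this freedom the claim would fail, since the linear form representing a hyperplane is only determined up to a nonzero scalar. I do not expect a real obstacle; the lemma is essentially a reformulation of the perspective condition in coordinates.
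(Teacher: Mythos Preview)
Your proof is correct and follows essentially the same approach as the paper: both arguments translate the perspective condition into a linear relation $\beta_j=c_j\alpha_j+\mu_j\alpha_0$ (the paper writes it as $\alpha_j=\kappa_j\beta_j+\lambda_j\alpha_0$, which is the same relation solved the other way), observe that the scalar on the non-$\alpha_0$ term is nonzero because $H_0$ lies in neither arrangement, and then rescale $\beta_j$ to absorb that scalar. Your use of the quotient $V^*/\kk\alpha_0$ to phrase the restriction is a slightly cleaner packaging, but the content is identical.
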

\begin{proof}
The arrangements $\A'(\beta)$ and $\A(\alpha)$ are perspective from $H_0$ if and only if $\mathbb{V}(\alpha_j, \alpha_0) = \mathbb{V}(\beta_j, \alpha_0)$ for $j=1,\ldots,n$.  It follows that $\alpha_j=\kappa_j\beta_j+\lambda_j\alpha_0$ for some scalars $\kappa_j,\lambda_j$.  Since $\alpha_j$ cannot be a multiple of $\alpha_0$, $\kappa_j\neq 0$.  Since $\mathbb{V}(\beta_j)=\mathbb{V}(\kappa_j\beta_j)$, we may replace $\beta_j$ by $\kappa_j\beta_j$.
\end{proof}

We say that the hyperplane $H_0$ is in \textit{rank $k$ general linear position} with respect to $\A(\alpha)$ if no flat in $\matroid(\alpha)$ of rank at most $k$ is  contained in $H_0$.  Lemma~\ref{lem:lingen2} shows that the property of rank $k$ general linear position for $H_0$ is passed to hyperplane arrangements perspective to $\A(\alpha)$ from $H_0$ that are weak representations of $\matroid(\alpha)$ up to rank $k$.

\begin{lem}\label{lem:lingen2}
Suppose $H_0$ is in rank $k$ general linear position with respect to a hyperplane arrangement $\A(\alpha)$, $\A'(\beta)$ is perspective to $\A$ from $H_0$, and $\beta$ is a weak representation of $\matroid(\alpha)^{[k]}$.  Then $H_0$ is also in rank $k$ general linear position with respect to $\A'$.
\end{lem}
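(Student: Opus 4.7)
The plan is to argue by contradiction: assume some flat $Y$ of $\matroid(\beta)$ of rank $r\le k$ is contained in $H_0$, and then produce from $Y$ a flat $X$ of $\matroid(\alpha)$ of rank $r\le k$ also contained in $H_0$, contradicting the hypothesis on $\A(\alpha)$. The workhorse relation for passing between the two arrangements will be the formula $\beta_j=\alpha_j+\lambda_j\alpha_0$ supplied by Lemma~\ref{lem:Directions}.

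The first step is to pick an independent set $\{\beta_{i_1},\ldots,\beta_{i_r}\}\subseteq Y$. Since $\beta$ is a weak representation of $\matroid(\alpha)$ up to rank $k$, every independent set of $\matroid(\beta)^{[k]}$ is independent in $\matroid(\alpha)^{[k]}$, and because $r\le k$ the lift $\{\alpha_{i_1},\ldots,\alpha_{i_r}\}$ is independent in $\matroid(\alpha)$. Its closure $X$ is then a flat of $\matroid(\alpha)$ of rank $r\le k$, which will be the candidate flat forbidden to lie in $H_0$.

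The next step is to translate the containment $Y\subseteq H_0$ into a linear algebra statement. The containment $\mathbb{V}(\beta_{i_1},\ldots,\beta_{i_r})\subseteq \mathbb{V}(\alpha_0)$ is equivalent, via the projective Nullstellensatz for linear forms, to the assertion $\alpha_0\in\mathrm{span}(\beta_{i_1},\ldots,\beta_{i_r})$. Writing $\alpha_0=\sum_s c_s\beta_{i_s}$ and substituting $\beta_{i_s}=\alpha_{i_s}+\lambda_{i_s}\alpha_0$ yields
\[
\Bigl(1-\sum_s c_s\lambda_{i_s}\Bigr)\alpha_0=\sum_s c_s\alpha_{i_s}.
\]

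A case analysis then finishes the proof. If the coefficient $1-\sum_s c_s\lambda_{i_s}$ is nonzero, then $\alpha_0$ lies in $\mathrm{span}(\alpha_{i_1},\ldots,\alpha_{i_r})$, so $X\subseteq H_0$, contradicting the general linear position hypothesis for $\A(\alpha)$. Otherwise the right-hand side vanishes, and independence of the $\alpha_{i_s}$ forces every $c_s=0$, whence $\alpha_0=0$, absurd. The only genuine subtlety I anticipate is the bookkeeping between matroid independence in $\matroid(\alpha)$, in $\matroid(\beta)$, and in their rank-$k$ truncations; the rest is routine linear algebra.
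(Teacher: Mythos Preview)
Your proof is correct and follows essentially the same route as the paper's: both pick an independent $r$-subset of $Y$ with $r\le k$, lift it to an independent set in $\matroid(\alpha)$ via the weak-representation hypothesis, and use $\beta_j=\alpha_j+\lambda_j\alpha_0$ together with $\alpha_0\in\mathrm{span}(\beta_{i_1},\ldots,\beta_{i_r})$ to reach a contradiction. The only cosmetic difference is that the paper replaces your explicit case split with a single dimension count---observing that $\langle\alpha_0,\alpha_{i_1},\ldots,\alpha_{i_r}\rangle=\langle\beta_{i_1},\ldots,\beta_{i_r}\rangle$ and comparing codimensions---which packages your two cases into one line.
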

\begin{proof}
Let $Y\in L_t(\A')$ with $t\le k$.  Without loss of generality, suppose that $\mathbb{V}(\beta_1,\ldots,\beta_t)=Y$.  Since $\beta_1, \ldots, \beta_t$ must be linearly independent, and $\beta$ is a weak representation of $\matroid(\alpha)^{[k]},$ $\alpha_1, \ldots, \alpha_t$ are also linearly independent.  Assume, for contradiction, that $Y$ is contained in $H_0$.  Then $\alpha_0$ is in the ideal $\langle \beta_1, \ldots, \beta_t\rangle,$ and $\langle \alpha_0, \alpha_1, \ldots, \alpha_t\rangle = \langle \beta_1, \ldots, \beta_t\rangle$ because $\beta_i = \alpha_i+\lambda_i \alpha_0$.  However, $\langle \alpha_0, \alpha_1,\ldots,\alpha_t\rangle$ must have codimension $t+1$ as $H_0$ is in rank $k$ general linear position with respect to $\A$, a contradiction since $\langle \beta_1,\ldots,\beta_t\rangle$ has codimension $t$.
\end{proof}

\begin{remark}
If $\beta$ is a weak representation of $\matroid(\alpha)$ up to rank $k,$ then by Lemma~\ref{lem:lingen2}, rank $k$ general linear position is preserved, but it is not guaranteed for higher ranks.  In particular, if $\{\alpha_1, \ldots, \alpha_{k+1}\}$ is a minimal dependent set satisfying $ \sum_{i=1}^{k+1} \gamma_{i}\alpha_{i}=0$ for some constants $\gamma_1,\ldots,\gamma_{k+1}$ which are not all zero, then $\{\beta_1, \ldots, \beta_{k+1}\}$ is not required to be dependent because Definition~\ref{def:weak rep} only imposes conditions on the $k$-truncation of $\matroid(\beta)$. Indeed, the relation on the $\alpha_i$ gives
\[
\sum_{i=1}^{k+1} \gamma_{i}\beta_{i}=\sum_{i=1}^{k+1} \gamma_{i}(\alpha_{i}+\lambda_{i}\alpha_0)=\left(\sum_{i=1}^{k+1}\gamma_{i}\lambda_{i}\right)\alpha_0.
 \] 
 If $\beta_1, \ldots, \beta_{k+1}$ are linearly independent then $\alpha_0 \in \langle \beta_1, \ldots, \beta_{k+1}\rangle$ and we conclude that $\cl(\beta_1, \ldots, \beta_{k+1})$ is a flat of rank $k+1$ contained in $H_0$.

\end{remark}

Not every $n$-tuple $(\lambda_j)$ determines a weak representation $\beta$ of $\matroid(\alpha)$ (or its truncations) perspective to $\A(\alpha)$ from $H_0$ as dependencies of $\matroid(\alpha)$ may not be respected.  Proposition \ref{prop:ParallelEquations} gives local conditions that imply that an arrangement perspective to $\A(\alpha)$ from a hyperplane in rank $k$ general linear position is a weak representation of $\matroid(\alpha)$ up to rank $k$.

\begin{prop}\label{prop:ParallelEquations}
Let $H_0=\mathbb{V}(\alpha_0)$ be in rank $k$ general linear position with respect to $\A(\alpha)$.
For $j=1,\ldots,n$, define $\beta_j=\alpha_j+\lambda_j\alpha_0$.  The map $\beta$ is a weak representation of $\matroid(\alpha)$ up to rank $k\ge 3$ if and only if the following condition is met for each flat $X\in L_{k-1}(\A)$:
\begin{equation}\label{eq:rels}
\mbox{if there exist } \gamma_j\in\kk \mbox{ so that } \sum_{j:X\subseteq H_j} \gamma_j\alpha_j=0, \mbox{ then } \sum_{j:X\subseteq H_j} \gamma_j\lambda_j=0.
\end{equation}
\end{prop}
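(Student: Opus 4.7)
\emph{Plan.} The strategy is to translate \eqref{eq:rels} into an equality of matrix kernels, then verify that equality via a short rank argument using the weak representation hypothesis (or conversely).

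For any flat $X \in L_{k-1}(\A)$, let $M(\alpha_X)$ and $M(\beta_X)$ denote the coefficient matrices on the index set $\{j : X \subseteq H_j\}$, and let $\lambda_X$ be the restriction of $\lambda = (\lambda_1, \ldots, \lambda_n)$ to that index set. Because $\beta_j = \alpha_j + \lambda_j \alpha_0$ (Lemma~\ref{lem:Directions}), we have the rank-one identity $M(\beta_X) = M(\alpha_X) + \alpha_0 \lambda_X^T$. Rank $k$ general linear position applied to the rank $k-1 \le k$ flat $X$ gives $\alpha_0 \notin \im M(\alpha_X)$, from which a direct computation yields
\[ \ker M(\beta_X) = \ker M(\alpha_X) \cap \ker \lambda_X^T. \]
Thus $\ker M(\beta_X) \subseteq \ker M(\alpha_X)$, with equality exactly when \eqref{eq:rels} holds at $X$. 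This is the central reformulation I would use.

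For the forward direction, fix $X \in L_{k-1}(\A)$ and assume $\beta$ is a weak representation of $\matroid(\alpha)$ up to rank $k$. The kernel inclusion above forces $\rank M(\beta_X) \geq \rank M(\alpha_X) = k - 1$. Conversely, every $k$-element subset $S \subseteq X$ has rank at most $k - 1$ in $\matroid(\alpha)$, hence is dependent in $\matroid(\alpha)^{[k]}$, hence (by the hypothesis) in $\matroid(\beta)^{[k]}$, and therefore in $\matroid(\beta)$. So no $k$-subset of $\{\beta_j : j \in X\}$ is linearly independent, forcing $\rank M(\beta_X) \leq k - 1$. Equality of ranks yields equality of kernels, which is exactly \eqref{eq:rels}.

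For the backward direction, assume \eqref{eq:rels} and let $S$ with $|S| \leq k$ be dependent in $\matroid(\alpha)$. Then $\cl(S)$ has rank at most $|S|-1 \leq k - 1$, so it extends to a flat $X \in L_{k-1}(\A)$ (unless $\rank(\A) < k - 1$, in which case the proposition is vacuous). The condition at $X$, together with the kernel formula, gives $\ker M(\alpha_X) = \ker M(\beta_X)$. Zero-padding any relation on $\{\alpha_j : j \in S\}$ to $X$ produces an element of $\ker M(\alpha_X) = \ker M(\beta_X)$ still supported on $S$, hence a genuine relation on $\{\beta_j : j \in S\}$. Therefore $S$ is dependent in $\matroid(\beta)$, and $\beta$ is a weak representation of $\matroid(\alpha)$ up to rank $k$.

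The main obstacle I anticipate is the matroid bookkeeping rather than any deep calculation: confirming that rank $k$ general linear position is being invoked at the correct rank (only flats of rank $\leq k$), verifying that the extension step in the backward direction is legitimate when $\rank \cl(S)$ is strictly less than $k-1$, and keeping straight the identifications between kernels on the different index sets $S$ and $X$.
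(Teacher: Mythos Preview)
Your proof is correct. The backward direction matches the paper's almost exactly (extend the closure of $S$ to a rank $k-1$ flat and transfer the relation there), with one correction to your parenthetical: when $\rank(\A)<k-1$ the proposition is \emph{not} vacuous---by the paper's convention $L_{k-1}(\A)=L_{\rank(\A)}(\A)$, so one simply takes $X=\cap_i H_i$, just as the paper does. You should also say explicitly that dependent sets of size $>k$ are trivially dependent in $\matroid(\beta)^{[k]}$, which is why you may restrict to $|S|\le k$; the paper handles this case separately.

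The forward direction is organized genuinely differently. The paper invokes Lemma~\ref{thm: flat map} to bound the rank of the image flat $Y$ and then argues by contradiction via Lemma~\ref{lem:lingen2}: if $\sum\gamma_j\lambda_j\neq 0$ then $\sum\gamma_j\beta_j$ is a nonzero multiple of $\alpha_0$, forcing $Y\subseteq H_0$, which violates general position. Your route is a self-contained rank count built on the kernel identity $\ker M(\beta_X)=\ker M(\alpha_X)\cap\ker\lambda_X^T$ (which already encodes the general-position fact $\alpha_0\notin\im M(\alpha_X)$): the inclusion gives $\rank M(\beta_X)\ge k-1$, the weak-representation hypothesis applied to $k$-subsets of $X$ gives $\rank M(\beta_X)\le k-1$, and equality of kernels follows. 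This bypasses Lemmas~\ref{thm: flat map} and~\ref{lem:lingen2} entirely and is cleaner as pure linear algebra; the paper's version, on the other hand, keeps the geometric picture of flats and their containment in $H_0$ in the foreground, which is the language the surrounding development relies on.
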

\begin{proof}
First assume that $\beta$ is a weak representation of $\matroid(\alpha)$ up to rank $k$.  Suppose $X=\{\alpha_{i_1},\ldots,\alpha_{i_t}\}$ is a flat of $\matroid(\alpha)^{[k]}$ of rank $r\le k-1$.  Let $Y$ be the closure of $\{\beta_{i_1},\ldots,\beta_{i_t}\}$.  Since $k\geq 3,$ by Lemma~\ref{thm: flat map}, $Y$ is a flat of rank at most $r$ in $\matroid(\beta)$.  

Now suppose that there exist $\gamma_j\in\kk$ so that $\sum_{j:X\subseteq H_j}\gamma_j\alpha_j=0$.  We show first that $\sum_{j:X\subseteq H_j}\gamma_j\beta_j= 0$.  Suppose for the sake of contradiction that $\sum_{j:X\subseteq H_j}\gamma_j\beta_j\neq 0$.  Then
\[
\sum_{j:X\subseteq H_j}\gamma_j\beta_j=\sum_{j:X\subseteq H_j}\gamma_j(\alpha_j+\lambda_j\alpha_0)=\left(\sum_{j:X\subseteq H_j}\gamma_j\lambda_j\right)\alpha_0\neq 0.
\]
It follows that $\alpha_0\in\langle \beta_j:X\subseteq H_j\rangle$.  Thus the flat $Y$ is contained in $H_0$, contradicting Lemma~\ref{lem:lingen2}.  So $\sum_{j:X\subseteq H_j}\gamma_j\beta_j= 0$ and thus
\[
0=\sum_{j:X\subseteq H_j}\gamma_j\beta_j=\sum_{j:X\subseteq H_j}\gamma_j(\alpha_j+\lambda_j\alpha_0)=\left(\sum_{j:X\subseteq H_j}\gamma_j\lambda_j\right)\alpha_0,
\]
so $\sum_{j:X\subseteq H_j}\gamma_j\lambda_j=0$.

Now assume that Equation~\eqref{eq:rels} is satisfied for every $X\in L_{k-1}(\A)$.  Suppose that $\{\alpha_{i_1},\ldots,\alpha_{i_r}\}$ is a dependent set in $\matroid(\alpha)^{[k]}$.  If $\cl(\alpha_{i_1},\ldots,\alpha_{i_r})$ has rank $k$, then $r>k$, and any set of more than $k$ linear forms is dependent in $\matroid(\beta)^{[k]}$, so $\beta_{i_1},\ldots,\beta_{i_r}$ is dependent in $\matroid(\beta)^{[k]}$.  Now assume that $X'=\cl(\alpha_{i_1},\ldots,\alpha_{i_r})$ has rank at most $k-1$.  Since we assumed $\alpha_{i_1},\ldots,\alpha_{i_r}$ are dependent, there exist $\gamma_1,\ldots,\gamma_r\in\kk$ so that
\[
\gamma_1\alpha_{i_1}+\cdots+\gamma_r\alpha_{i_r}=0.
\]
If $\rank(X')<k-1$, take a flat $X$ of rank $k-1$ containing $X'$ (or take $X=\cap H_i$ if $k-1>\rank(\A)$). Since Equation~\eqref{eq:rels} is satisfied for $X$, we also obtain
\[
\gamma_1\beta_{i_1}+\cdots+\gamma_r\beta_{i_r}=(\gamma_1\alpha_{i_1}+\cdots+\gamma_r\alpha_{i_r})+(\gamma_1\lambda_{i_1}+\cdots+\gamma_r\lambda_{i_r})\alpha_0=0.
\]
So $\beta_{i_1},\ldots,\beta_{i_r}$ are also dependent.  It follows that $\beta$ is a weak representation of $\matroid(\alpha)$ up to rank $k$.
\end{proof}

We can reformulate the results of Proposition~\ref{prop:ParallelEquations} and Lemma~\ref{lem:Directions} as a matrix condition.
For each flat $X \in L_{k-1}(\A)$ there is an $n_X-\rank(X)$ dimensional space of linear relations among the linear forms defining $X$. Let $N^{[k]}_{\alpha}$ be the $(\sum_{X\in L_{k-1}(\A)} n_X-\rank(X)) \times n$ matrix whose rows encode the relations of~\eqref{eq:rels}.  The kernel of $N_{\alpha}^{[k]}$ is the space of tuples $(\lambda_j)_{j=1}^n\in \kk^n$ satisfying~\eqref{eq:rels}.

\begin{cor}\label{cor:NSpP}
The space of arrangements $\A'(\beta)$ which are weak representations of $\matroid(\alpha)$ up to rank $k$ and perspective to $\A(\alpha)$ from a hyperplane $H_0$ in rank $k$ general linear position with respect to $\A(\alpha)$ is isomorphic to the kernel of the matrix $N^{[k]}_{\alpha}$.
\end{cor}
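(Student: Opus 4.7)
The strategy is to assemble the two preceding results — Lemma \ref{lem:Directions} and Proposition \ref{prop:ParallelEquations} — into an explicit linear isomorphism. First, I would fix the defining form $\alpha_0$ of $H_0$ and set up a parametrization map $\Phi$ from the space of arrangements $\A'(\beta)$ perspective to $\A(\alpha)$ from $H_0$ to $\kk^n$. Given any such $\A'(\beta)$, Lemma \ref{lem:Directions} says that, after rescaling each $\beta_j$ (which does not change the hyperplane $\mathbb{V}(\beta_j)$ and hence does not change the arrangement), we may write $\beta_j=\alpha_j+\lambda_j\alpha_0$ for uniquely determined scalars $\lambda_j\in\kk$; set $\Phi(\A'(\beta))=(\lambda_j)_{j=1}^n$. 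The inverse is immediate: any tuple $(\lambda_j)\in\kk^n$ determines a family of forms $\beta_j=\alpha_j+\lambda_j\alpha_0$ and hence an arrangement perspective to $\A$ from $H_0$. Thus $\Phi$ is a bijection, and its inverse is manifestly $\kk$-linear, so the space of perspective arrangements inherits a vector-space structure identifying it with $\kk^n$.

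Next, I would cut down both sides to the condition of being a weak representation of $\matroid(\alpha)$ up to rank $k$. Proposition \ref{prop:ParallelEquations} characterizes exactly this: since $H_0$ is in rank $k$ general linear position with respect to $\A$, the arrangement $\A'(\beta)$ with $\beta_j=\alpha_j+\lambda_j\alpha_0$ is a weak representation of $\matroid(\alpha)$ up to rank $k$ if and only if, for every $X\in L_{k-1}(\A)$ and every relation $\sum_{j:X\subseteq H_j}\gamma_j\alpha_j=0$ among the forms defining $X$, the corresponding relation $\sum_{j:X\subseteq H_j}\gamma_j\lambda_j=0$ holds among the $\lambda_j$. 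Under $\Phi$, this cuts the admissible arrangements out as a linear subspace of $\kk^n$.

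Finally, I would match these equations to the kernel of $N^{[k]}_\alpha$. For each $X\in L_{k-1}(\A)$, the linear relations among the $n_X$ forms defining $X$ form a subspace of dimension $n_X-\rank(X)$; by construction the rows of $N^{[k]}_\alpha$ are indexed by a basis of this subspace across all $X\in L_{k-1}(\A)$, and each row acts on $(\lambda_j)_{j=1}^n$ by the dot product $\sum_{j:X\subseteq H_j}\gamma_j\lambda_j$. Thus the simultaneous vanishing of all the conditions in \eqref{eq:rels} is equivalent to $N^{[k]}_\alpha\cdot(\lambda_j)=0$, i.e.\ to $(\lambda_j)\in\ker N^{[k]}_\alpha$. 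Combining with the first two paragraphs, $\Phi$ restricts to a $\kk$-linear bijection from the space of weak representations of $\matroid(\alpha)$ up to rank $k$ perspective to $\A$ from $H_0$ onto $\ker N^{[k]}_\alpha$, which is the desired isomorphism.

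The proof is essentially bookkeeping; the only delicate point — the only place one might slip — is the implicit quotient by rescaling of the $\beta_j$. I would therefore be careful in the first paragraph to note that the normalization in Lemma \ref{lem:Directions} pins down $\lambda_j$ uniquely once $\alpha_j$ and $\alpha_0$ are fixed, so $\Phi$ is genuinely well-defined on arrangements rather than merely on tuples of forms.
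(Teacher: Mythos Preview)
Your proposal is correct and follows the same route as the paper: the corollary is stated there as an immediate reformulation of Lemma~\ref{lem:Directions} and Proposition~\ref{prop:ParallelEquations} in matrix form, with no further argument given. Your write-up simply fills in the bookkeeping the paper leaves implicit, and your caution about the normalization in Lemma~\ref{lem:Directions} is appropriate.
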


\begin{notation}
\label{not:wrep}
Suppose we are given a hyperplane arrangement $\A(\alpha)\subset \mathbb{P}(V)$ with $n$ hyperplanes.  We call $\beta:\{1,\ldots,n\}\to V^*$ a \textit{(weak) \textbf{perspective}  representation} of $\matroid(\alpha)$ (or $\A(\alpha)$) \textit{up to rank} $k$, if
\begin{itemize}
\item there exists a hyperplane $H_0$ in rank $k$ general position with respect to $\A(\alpha)$ and
\item $\beta$ is a (weak) representation of $\matroid(\alpha)$ up to rank $k$ perspective from $H_0$.
\end{itemize}
Hereafter, we abbreviate \textit{weak perspective representation} as $\wrep{}$.  By Corollary~\ref{cor:NSpP}, the space of $\wrep{}$s of $\A(\alpha)$ up to rank $k$ is independent of the choice of $H_0$.  In case $k=\rank(\A)$, we simply say $\beta$ is a $\wrep{}$ of $\A(\alpha)$.  If we wish to specify the hyperplane $H_0$, we say $\beta$ is a $\wrep{}$ \textit{from }$H_0$. 
\end{notation}

In the theory of $\wrep{}$s, it is important to identify those which are \textit{trivial}.  Our intuition for trivial $\wrep{}$s is informed by the parallel drawings of Remark~\ref{rem:Parallel}.  In that case, the scalings and translations of a hyperplane arrangement are considered trivial parallel drawings.  In the following proposition we construct hyperplane arrangements which correspond to scalings and translations for arbitrary hyperplanes of perspectivity.  We will call these \textit{trivial} since every hyperplane arrangement admits them.

\begin{prop}\label{prop:defTrivialPerspectivities}
Given a hyperplane arrangement $\A(\alpha)\subset \mathbb{P}^\ell$ and a hyperplane of perspectivity $H_0$ in rank $2$ general linear position with respect to $\A(\alpha)$, define $\beta$ in either of the following ways:
\begin{enumerate}
\item for a projective linear isomorphism $T:\mathbb{P}^\ell\to\mathbb{P}^\ell$ restricting to the identity on $H_0$, set $\beta_j=\alpha_j\circ T^{-1}$.
\item for a fixed point $q \in \mathbb{P}^\ell\setminus H_0$, for every $j\neq 0$, let $\beta_j$ define the hyperplane $H'_j$ containing $q$ and the codimension two linear space $\mathbb{V}(\alpha_j,\alpha_0)$.
\end{enumerate}
In either case, $\beta$ is a $\wrep{}$ of $\A(\alpha)$ from $H_0$.
\end{prop}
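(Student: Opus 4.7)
The plan is to verify, for each of the two constructions, that (i) $\A'(\beta)$ is perspective to $\A(\alpha)$ from $H_0$ and (ii) $\beta$ is a weak representation of $\matroid(\alpha)$ in the sense of Definition~\ref{def:weak rep}; together, by Notation~\ref{not:wrep}, this yields that $\beta$ is a $\wrep{}$ of $\A(\alpha)$ from $H_0$. Lemma~\ref{lem:Directions} and Proposition~\ref{prop:ParallelEquations} provide the right framework: in each case we will exhibit $\beta_j = \alpha_j + \lambda_j \alpha_0$ and then verify the local relations condition~\eqref{eq:rels}.

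For construction (1), since $T$ fixes $H_0$ pointwise, so does $T^{-1}$; hence for every $p \in H_0$ we have $\beta_j(p) = \alpha_j(T^{-1}(p)) = \alpha_j(p)$. Therefore $\mathbb{V}(\beta_j) \cap H_0 = H_j \cap H_0$ for every $j$, giving perspectivity from $H_0$. For the matroid condition, note that $\beta_j = \alpha_j \circ T^{-1}$ is the image of $\alpha_j$ under the linear isomorphism $V^* \to V^*$ induced by $T^{-1}$. Such an isomorphism preserves linear dependencies in both directions, so $\matroid(\beta) = \matroid(\alpha)$ and $\beta$ is in fact a strong representation of $\matroid(\alpha)$. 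Combined with the perspectivity, this makes $\beta$ a $\wrep{}$ of $\A(\alpha)$ from $H_0$.

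For construction (2), the hyperplane $H'_j$ contains the codimension-two subspace $\mathbb{V}(\alpha_j,\alpha_0) = H_j \cap H_0$. Since $H'_j \cap H_0$ is itself a hyperplane of $H_0$ containing this codimension-two subspace, the two must agree, which establishes perspectivity from $H_0$. Any linear form vanishing on $\mathbb{V}(\alpha_j, \alpha_0)$ lies in $\mathrm{span}(\alpha_j, \alpha_0)$; since $H'_j \neq H_0$, after rescaling we may write $\beta_j = \alpha_j + \lambda_j \alpha_0$, and then the condition $\beta_j(q) = 0$ forces $\lambda_j = -\alpha_j(q)/\alpha_0(q)$. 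The key observation is now that $\lambda_j$ depends linearly on $\alpha_j$ via evaluation at $q$: for any flat $X \in L_{k-1}(\A)$ and any relation $\sum_{j:X \subseteq H_j} \gamma_j \alpha_j = 0$, evaluating at $q$ gives $\sum_{j:X \subseteq H_j} \gamma_j \alpha_j(q) = 0$, which (dividing by $-\alpha_0(q) \neq 0$) is exactly $\sum_{j:X \subseteq H_j} \gamma_j \lambda_j = 0$. This is precisely condition~\eqref{eq:rels}, so Proposition~\ref{prop:ParallelEquations} shows $\beta$ is a weak representation of $\matroid(\alpha)$ at every rank.

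The only real subtlety lies in construction (2), where one must spot that the scalars $\lambda_j$ are obtained from the $\alpha_j$ by the linear operation of evaluation at $q$, so that every linear dependence among the $\alpha_j$ automatically propagates to a vanishing sum of the $\lambda_j$. In construction (1) the argument is essentially formal once one identifies $\beta$ as the pullback of $\alpha$ by a linear automorphism of $V^*$ that restricts to the identity on $H_0$.
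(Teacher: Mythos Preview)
Your argument for (1) matches the paper's exactly. For (2), your approach is correct but genuinely different from the paper's: the paper argues geometrically, showing that if $\alpha_{i_1},\ldots,\alpha_{i_s}$ are dependent (so they cut out a flat $X$ of rank $\le s-1$), then the linear span of $q$ and $X\cap H_0$ has codimension at most $s-1$ and is contained in $\mathbb{V}(\beta_{i_1},\ldots,\beta_{i_s})$, forcing the $\beta$'s to be dependent too. Your route---writing $\lambda_j=-\alpha_j(q)/\alpha_0(q)$ and observing that this is a linear functional of $\alpha_j$, so every dependency $\sum\gamma_j\alpha_j=0$ immediately gives $\sum\gamma_j\lambda_j=0$ and hence $\sum\gamma_j\beta_j=0$---is more direct and in fact anticipates the computation in Proposition~\ref{prop:spaceTrivialpardrawings}. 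Either approach works; yours is arguably cleaner.

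Two small issues. First, your detour through Proposition~\ref{prop:ParallelEquations} is unnecessary and its hypothesis (that $H_0$ be in rank $k$ general linear position) is not available here---you only have rank $2$ general position. But your computation already proves directly that every dependent set in $\matroid(\alpha)$ is dependent in $\matroid(\beta)$, which is the definition of a weak representation; you should simply say so rather than invoking Proposition~\ref{prop:ParallelEquations}. Second, you never check that the $\beta_j$ define \emph{distinct} hyperplanes in case (2), which is required for $\A'(\beta)$ to be an arrangement. This is precisely where the rank $2$ general position hypothesis enters: if $\mathbb{V}(\beta_i)=\mathbb{V}(\beta_j)$ for $i\neq j$, then $H_i\cap H_0=H_j\cap H_0$, forcing the rank-$2$ flat $H_i\cap H_j$ to lie in $H_0$, a contradiction. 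The paper notes this explicitly, and you should too.
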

\begin{proof}
For (1), since $T$ is a linear isomorphism, $\matroid(\beta) = \matroid(\alpha).$ $\A'(\beta)$ is perspective to $\A(\alpha)$ from $H_0$ because $T$ restricts to the identity on $H_0$, so $\alpha_j$ and $\beta_j$ both vanish along the codimension two linear space $H_j\cap H_0$ for all $j\neq 0$.

For (2), the assumption that $H_0$ is in rank $2$ general linear position with respect to $\A(\alpha)$ guarantees that the linear forms $\beta_j$ define distinct hyperplanes.  $\A'(\beta)$ is perspective to $\A(\alpha)$ by construction.  

Now suppose that $\alpha_1,\ldots,\alpha_s$ are dependent in $\matroid(\alpha)$.  We show that $\beta_1,\ldots,\beta_s$ are dependent in $\matroid(\beta)$.  Since $\alpha_1,\ldots,\alpha_s$ are dependent, they define a flat $X\in L_t(\A)$ of rank $t\le s-1$.  We need only show that $\beta_1,\ldots,\beta_s$ define a flat $Y$ in $\A'(\beta)$ of rank at most $s-1$.

The linear span $L$ of the chosen point $q\in \bbP^\ell\setminus H_0$ and $X\cap H_0$ has codimension $t$ (if $X\not\subset H_0$) or $t-1$ (if $X\subset H_0$, which may happen if $t>2$).  Since each of the linear forms $\beta_1,\ldots,\beta_s$ vanish at the chosen point $q$ and at $X\cap H_0$, the flat $Y$ in $\A'(\beta)$ defined by $\beta_1,\ldots,\beta_s$ contains $L$ and so has codimension at most $t\le s-1$.  Hence $\beta_1,\ldots,\beta_s$ are dependent, proving that $\matroid(\beta)$ is a weak representation of $\matroid(\alpha)$.  Note that $\rank(\A'(\beta)) = \rank(\A(\alpha))-1$ if $\A$ is essential. 
\end{proof}

The arrangements in Proposition~\ref{prop:defTrivialPerspectivities} (1) correspond to translations and scalings.  The arrangements in Proposition~\ref{prop:defTrivialPerspectivities} (2) are limits of scalings (toward $q$).

\begin{defn}
Given a hyperplane arrangement $\A(\alpha)\subset \mathbb{P}(V)$ with $n$ hyperplanes, the map $\beta:\{1,\ldots,n\}\to V^*$ is a \textit{trivial} $\wrep{}$ of $\A(\alpha)$ if there is a hyperplane $H_0$ in rank $2$ general linear position with respect to $\A(\alpha)$ so that $\beta$ can be defined in either of the ways listed in Proposition~\ref{prop:defTrivialPerspectivities}.  If we wish to emphasize the choice of hyperplane $H_0$, we say $\beta$ is a trivial $\wrep{}$ of $\A(\alpha)$ \textit{from} $H_0$.
\end{defn}

\begin{prop}\label{prop:spaceTrivialpardrawings}
A hyperplane arrangement $\A'(\beta)$ is a trivial $\wrep{}$ of $\A(\alpha)$ from $H_0=\mathbb{V}(\alpha_0)$ if and only if there is a vector $\bv\in \kk^{\ell+1}$ so that $\beta_j=\alpha_j+\alpha_j(\bv)\alpha_0$.
\end{prop}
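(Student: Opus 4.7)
The plan is to reduce the proposition to a direct coordinate calculation using Lemma~\ref{lem:Directions} as the starting point. By that lemma, any arrangement $\A'(\beta)$ perspective to $\A(\alpha)$ from $H_0 = \mathbb{V}(\alpha_0)$ admits a normalization $\beta_j = \alpha_j + \lambda_j \alpha_0$ for scalars $\lambda_j \in \kk$; the content of the proposition is that $\beta$ is trivial precisely when there exists a single $\bv \in \kk^{\ell+1}$ with $\lambda_j = \alpha_j(\bv)$ for all $j$, equivalently when the tuple $(\lambda_j)$ lies in the image of $M(\alpha)^T$.

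For the forward direction I would handle the two constructions in Proposition~\ref{prop:defTrivialPerspectivities} separately. For construction~(1), I lift the projective isomorphism $T$ to $\widetilde T \in GL(V)$; since $T$ fixes $H_0$ pointwise, $\widetilde T$ must act as a single scalar on the linear hyperplane $\mathbb{V}(\alpha_0) \subset V$, and after rescaling I may assume $\widetilde T|_{\mathbb{V}(\alpha_0)} = \mathrm{id}$. Then $\widetilde T - \mathrm{id}$ vanishes on $\mathbb{V}(\alpha_0)$, so $\widetilde T(\mathbf{x}) = \mathbf{x} + \alpha_0(\mathbf{x})\bw$ for some $\bw \in V$. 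A short calculation shows that $\widetilde T^{-1}(\mathbf{x}) = \mathbf{x} + \alpha_0(\mathbf{x})\bv$ with $\bv = -\bw/(1+\alpha_0(\bw))$, whence $\beta_j = \alpha_j \circ \widetilde T^{-1} = \alpha_j + \alpha_j(\bv)\alpha_0$. For construction~(2), I use Lemma~\ref{lem:Directions} to write $\beta_j = \alpha_j + \lambda_j \alpha_0$ and impose $\beta_j(\bq) = 0$, which forces $\lambda_j = -\alpha_j(\bq)/\alpha_0(\bq)$; setting $\bv = -\bq/\alpha_0(\bq)$ then gives the desired formula.

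For the backward direction, given $\beta_j = \alpha_j + \alpha_j(\bv)\alpha_0$, I would split on whether $\alpha_0(\bv) = -1$. If $\alpha_0(\bv) \neq -1$, the linear map $\widetilde T(\mathbf{x}) = \mathbf{x} - \alpha_0(\mathbf{x})\bv/(1+\alpha_0(\bv))$ is invertible with inverse $\widetilde T^{-1}(\mathbf{x}) = \mathbf{x} + \alpha_0(\mathbf{x})\bv$ and fixes $\mathbb{V}(\alpha_0)$ pointwise, so $\beta$ arises from construction~(1). If $\alpha_0(\bv) = -1$, set $\bq = -\bv$ so that $\alpha_0(\bq) = 1 \neq 0$; a quick evaluation shows $\beta_j$ vanishes at $\bq$ and on $\mathbb{V}(\alpha_j, \alpha_0)$, so $\mathbb{V}(\beta_j)$ is the unique hyperplane through $q = [\bq]$ and $\mathbb{V}(\alpha_j, \alpha_0)$, realizing $\beta$ as construction~(2).

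The argument is essentially a bundle of short linear-algebra computations. The one structural subtlety is treating the two constructions of Proposition~\ref{prop:defTrivialPerspectivities} uniformly, which is precisely the dichotomy $\alpha_0(\bv) \neq -1$ versus $\alpha_0(\bv) = -1$: the latter is the singular ``limit'' case where construction~(1) degenerates and must be replaced by the point-centered construction~(2), matching the remark after Proposition~\ref{prop:defTrivialPerspectivities} that the point constructions are the limits of the scalings.
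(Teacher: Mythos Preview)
Your proof is correct and follows essentially the same approach as the paper's own proof: both handle constructions~(1) and~(2) of Proposition~\ref{prop:defTrivialPerspectivities} separately in the forward direction and split the backward direction on whether $\alpha_0(\bv)=-1$. The only difference is presentational: the paper fixes coordinates so that $\alpha_0=x_0$ and writes everything in terms of explicit matrices, whereas you work coordinate-free via the rank-one perturbation $\widetilde T(\mathbf{x})=\mathbf{x}+\alpha_0(\mathbf{x})\bw$; your condition $\alpha_0(\bv)\neq -1$ is exactly the paper's $v_0\neq -1$.
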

\begin{proof}
To simplify the proof, we assume that we have chosen coordinates so that the form $\alpha_0$ defining $H_0$ is $x_0$. 

Suppose first that $q =[1:q_1:\cdots:q_\ell]\in\mathbb{P}^{\ell}\setminus H_0$,
and for each $j \neq 0$, $\beta_j$ defines the hyperplane vanishing at $q$ and also along the codimension two linear space $H_j\cap H_0$.  Let $\bv=(-1,-q_1,-q_2,\ldots,-q_\ell)$.  It is straightforward to check that $\alpha_j+\alpha_j(\bv)\alpha_0$
vanishes at $q$ and also along $H_j\cap H_0$, and so defines the same hyperplane as $\beta_j$. Therefore, since we may assume $\beta_j$ has the form $\alpha_j+\lambda_j\alpha_0$ by
 Lemma~\ref{lem:Directions}, we conclude that $\beta_j = \alpha_j+\alpha_j(\bv)\alpha_0$.

Now suppose that $\beta =\alpha\circ T^{-1}$ for some linear isomorphism $T:\bbP^\ell\to \bbP^\ell$ which restricts to the identity on $H_0=\mathbb{V}(\alpha_0)$.  Then up to scalar multiple, a matrix for $T$ must have the form
	\[
	\left[\begin{array}{c c}
	c & \mathbf{0}\\
    \mathbf{u}^T& I_\ell
	\end{array}\right],
	\]
	where $I_\ell$ is the $\ell\times\ell$ identity matrix, $\mathbf{u}=(u_1,\ldots,u_\ell)$ is a vector in $\kk^\ell$, $c$ is a nonzero constant, and $\mathbf{0}$ is the $1\times \ell$ zero vector.  The matrix for $T^{-1}$ is
	\[
	\left[\begin{array}{c c}
	\frac1c & \mathbf{0} \\
	-\frac1c \mathbf{u}^T & I_\ell
	\end{array}\right].
	\]
	Since $\beta_j=\alpha_j\circ T^{-1}$, it follows that $\beta_j=\alpha_j+\alpha_j(\bv)\alpha_0$, where \[\bv = (\frac1c-1,-u_1,-u_2,\ldots,-u_\ell).\]
	
	For the reverse direction, suppose that $\beta_j=\alpha_j+\alpha_j(\bv)\alpha_0$ for some vector $\bv=(v_0,v_1,v_2,\ldots,v_{\ell})$.  As long as $v_{0}\neq -1$, we can set  $\beta=\alpha\circ T^{-1}$, where a matrix for $T$ has the form
	\[
	\left[\begin{array}{c c c c c}
	1/(v_0+1) & 0 & 0 & \cdots & 0 \\
	-v_1/(v_0+1) & 1 & 0 & \cdots & 0 \\
	-v_2/(v_0+1) & 0 & 1 & \cdots & 0  \\
	\vdots & \vdots & \ddots & \vdots & \vdots\\
	-v_\ell/(v_0+1) & 0 & 0 & \cdots & 1 \\
	\end{array}\right],
	\]
	giving a trivial $\wrep{}$ as in Proposition \ref{prop:defTrivialPerspectivities} (1). 
	If $v_0=-1$, then $\beta_j=\alpha_j+\alpha_j(\bv)\alpha_0$ vanishes on $q=[v_0:v_1:\cdots :v_{\ell}]$ and along the codimension two linear space $H_j\cap H_0$ for all $j\neq 0$, so $\A'(\beta)$ is an arrangement of hyperplanes through $q$ and for all $j \neq 0$, $\beta_j$ defines the hyperplane through $q$ and $H_0\cap H_j$, giving a trivial $\wrep{}$ as in Proposition \ref{prop:defTrivialPerspectivities} (2).\qedhere
\end{proof}

\begin{defn}
The space of non-trivial $\wrep{}$s of $\A(\alpha)$ from $H_0$ up to rank $k$ is the space of $\wrep{}$s of $\A(\alpha)$ from $H_0$ up to rank $k$ modulo the space of trivial $\wrep{}$s of $\A(\alpha)$ from $H_0$.
\end{defn}

In what follows recall that the coefficient matrix of $\A(\alpha)$ is the $n\times (\ell+1)$ matrix $M(\alpha)$ whose $i^\text{th}$ column is given by the coefficients of $\alpha_i$.

\begin{thm}\label{thm:parameterizingparalleldrawings}
Fix a hyperplane arrangement $\A(\alpha)\subset \mathbb{P}^\ell$, an integer $1\le k\le \rank(\A)$, and a hyperplane $H_0=\mathbb{V}(\alpha_0)$ in rank $k$ general linear position with respect to $\A$.  The space of hyperplane arrangements $\A'(\beta)$ which are non-trivial  $\wrep{}$s of $\A(\alpha)$ from $H_0$ up to rank $k$ is isomorphic to the homology (at $\kk^n$) of the three-term chain complex
\begin{equation}\label{eq:perspect}
\kk^{\ell+1}\xrightarrow{M(\alpha)^T} \kk^n\xrightarrow{N^{[k]}_{\alpha}} \kk^{\sum\limits_{X\in L_{k-1}(\A)} (n_X-\rank(X))}.
\end{equation}
\end{thm}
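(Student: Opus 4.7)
The plan is to identify each map in the complex (\ref{eq:perspect}) with a natural linear-algebraic description of weak perspective representations, so that the theorem reduces to invoking the definitions and assembling previously proved results. All hyperplane arrangements $\A'(\beta)$ perspective to $\A(\alpha)$ from $H_0$ can be parametrized, via Lemma \ref{lem:Directions}, by tuples $\lambda=(\lambda_j)\in\kk^n$ with $\beta_j=\alpha_j+\lambda_j\alpha_0$, and the two steps below show which tuples correspond to $\wrep$s and which to trivial ones.

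First I would identify $\ker N_\alpha^{[k]}$ with the full space of $\wrep$s from $H_0$. By construction, the rows of $N_\alpha^{[k]}$ encode, for each flat $X\in L_{k-1}(\A)$, the coefficients $(\gamma_j)$ of the linear relations $\sum_{j:X\subseteq H_j}\gamma_j\alpha_j=0$ appearing in Equation (\ref{eq:rels}). Hence $\lambda\in\ker N_\alpha^{[k]}$ if and only if $\lambda$ satisfies all these equations, which by Proposition \ref{prop:ParallelEquations} (equivalently Corollary \ref{cor:NSpP}) is the same as $\beta$ being a $\wrep$ of $\A(\alpha)$ up to rank $k$ from $H_0$.

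Next I would identify $\im M(\alpha)^T$ with the subspace of trivial $\wrep$s. By Proposition \ref{prop:spaceTrivialpardrawings}, an arrangement $\A'(\beta)$ is a trivial $\wrep$ of $\A(\alpha)$ from $H_0$ precisely when $\beta_j=\alpha_j+\alpha_j(\bv)\alpha_0$ for some $\bv\in\kk^{\ell+1}$, i.e., when its parametrizing tuple is $\lambda=(\alpha_j(\bv))_{j=1}^n$. Since the $j$-th column of $M(\alpha)$ is the coefficient vector of $\alpha_j$, evaluation $\alpha_j(\bv)$ equals the $j$-th entry of $M(\alpha)^T\bv$. Therefore the tuples coming from trivial $\wrep$s are exactly $\{M(\alpha)^T\bv:\bv\in\kk^{\ell+1}\}=\im M(\alpha)^T$. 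This bridging identity $\alpha_j(\bv)=(M(\alpha)^T\bv)_j$ is the single concrete calculation the proof requires and is the main (mild) obstacle, since everything else is definitional.

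Finally, Proposition \ref{prop:defTrivialPerspectivities} guarantees that every trivial $\wrep$ is a $\wrep$, so $\im M(\alpha)^T\subseteq\ker N_\alpha^{[k]}$ and (\ref{eq:perspect}) is indeed a chain complex. The homology at $\kk^n$ is then $\ker N_\alpha^{[k]}/\im M(\alpha)^T$, which by the two identifications above is exactly the space of $\wrep$s from $H_0$ up to rank $k$ modulo the trivial ones, i.e., the space of non-trivial $\wrep$s by definition.
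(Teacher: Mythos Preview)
Your proof is correct and follows essentially the same approach as the paper's: identify $\ker N_\alpha^{[k]}$ with all $\wrep$s via Corollary~\ref{cor:NSpP}, identify $\im M(\alpha)^T$ with the trivial ones via Proposition~\ref{prop:spaceTrivialpardrawings} and the evaluation identity $(M(\alpha)^T\bv)_j=\alpha_j(\bv)$, and read off the quotient. Your version is slightly more explicit in verifying that~\eqref{eq:perspect} is a chain complex (via Proposition~\ref{prop:defTrivialPerspectivities}), but otherwise the arguments are the same.
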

\begin{proof}
By Corollary~\ref{cor:NSpP}, the $n$-tuple $(\lambda_j)$ defines an arrangement $\A'(\beta)$, via $\beta_j=\alpha_j+\lambda_j\alpha_0$, which is a $\wrep{}$ from $H_0$ up to rank $k$  
exactly when it is in the kernel of the matrix $N^{[k]}_\alpha$.  The image of of $M(\alpha)^T$ consists of the $n$-tuples $(\alpha_1(\bv),\alpha_2(\bv),$ $\ldots,\alpha_n(\bv))$.  By Proposition~\ref{prop:spaceTrivialpardrawings}, these tuples define \textit{trivial} $\wrep{}$s of $\matroid(\alpha)$ from $H_0$.  It follows that the homology at $\kk^n$ consists of rank $k$ $\wrep{}$s of $\A$ up to rank $k$ modulo trivial $\wrep{}$s of $\A$, hence this homology is the space of non-trivial $\wrep{}$s of $\A$ up to rank $k$.
\end{proof}

It is apparent from Theorem~\ref{thm:parameterizingparalleldrawings} that the space of non-trivial $\wrep{}$s of $\A(\alpha)$ from $H_0$ up to rank $k$ depends on the choice of $H_0$ only up to isomorphism, as long as $H_0$ is in rank $k$ general linear position.  Thus we may refer simply to the space of non-trivial $\wrep{}$s of $\A(\alpha)$ up to rank $k$.  We now draw the promised connection to formal hyperplane arrangements and more generally to $k$-generated arrangements (see Remark~\ref{rem:kgenerated} for the latter).

\begin{cor}\label{cor:FormalityAndParallelDrawings}
Let $\A(\alpha)\subset \mathbb{P}^\ell$ be a hyperplane arrangement.  Then $\A(\alpha)$ is formal if and only if $\A$ has no non-trivial $\wrep{}$s up to rank $3$.  More generally, $\A$ is $k$-generated for $k\ge 3$ if and only if $\A$ has no non-trivial  $\wrep{}$s up to rank $k$.
\end{cor}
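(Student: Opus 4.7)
The plan is to show that the three-term complex~\eqref{eq:perspect} from Theorem~\ref{thm:parameterizingparalleldrawings} is the $\kk$-linear dual of the generalized formality complex from Corollary~\ref{cor:HomologicalCriterionFormality} (extended as in Remark~\ref{rem:kgenerated}), and then apply the standard fact that a three-term complex of finite-dimensional vector spaces is exact at its middle term if and only if its dual is.

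First I would unpack the two complexes. By Remark~\ref{rem:kgenerated}, $\A$ is $k$-generated if and only if
\[
\bigoplus_{X\in L_{k-1}(\A)}\kk^{n_X-\rank X}\xrightarrow{\oplus N_X}\kk^n\xrightarrow{M(\alpha)}\kk^{\ell+1}
\]
is exact at $\kk^n$, meaning $\im(\oplus N_X)=\sum_X\cR_X(\A)$ equals $\ker M(\alpha)=\cR(\A)$. By Theorem~\ref{thm:parameterizingparalleldrawings}, $\A$ has no non-trivial $\wrep{}$s up to rank $k$ if and only if the complex~\eqref{eq:perspect}, namely
\[
\kk^{\ell+1}\xrightarrow{M(\alpha)^T}\kk^n\xrightarrow{N^{[k]}_\alpha}\kk^{\sum_{X\in L_{k-1}(\A)}(n_X-\rank X)},
\]
is exact at $\kk^n$, meaning $\im M(\alpha)^T=\ker N^{[k]}_\alpha$.

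Next I would make the duality between the two complexes explicit. The columns of $N_X$ give a basis of $\cR_X(\A)\subset\kk^n$ consisting of length-$(\rank X+1)$ relations, so the columns of $\oplus N_X$ form a spanning set for $\sum_X\cR_X(\A)$. On the other hand, the rows of $N^{[k]}_\alpha$ encode, for each $X\in L_{k-1}(\A)$ and each basis element $(\gamma_j)\in\cR_X(\A)$, the linear functional $(\lambda_j)\mapsto\sum_j\gamma_j\lambda_j$ on $\kk^n$. Thus, after choosing the same bases of $\cR_X(\A)$ for both constructions, the rows of $N^{[k]}_\alpha$ are the columns of $\oplus N_X$; that is, $N^{[k]}_\alpha=(\oplus N_X)^T$. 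Combined with the manifest identity between $M(\alpha)$ and the transpose of $M(\alpha)^T$, complex~\eqref{eq:perspect} is precisely the transpose of the complex in Remark~\ref{rem:kgenerated}.

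Finally I would invoke the basic linear-algebra fact that for finite-dimensional vector spaces, $\im(\oplus N_X)=\ker M(\alpha)$ if and only if $\bigl(\im(\oplus N_X)\bigr)^\perp=\bigl(\ker M(\alpha)\bigr)^\perp$, i.e., $\ker N^{[k]}_\alpha=\im M(\alpha)^T$. This yields the equivalence of $k$-generatedness with the vanishing of non-trivial $\wrep{}$s up to rank $k$. Specializing to $k=3$ gives the formality statement. The only minor subtlety --- the main (and essentially trivial) point to verify --- is that the identification $N^{[k]}_\alpha=(\oplus N_X)^T$ is independent of the basis choices, which is clear because exactness at the middle term depends only on $\im(\oplus N_X)=\sum_X\cR_X(\A)$ and $\ker N^{[k]}_\alpha=\bigl(\sum_X\cR_X(\A)\bigr)^\perp$, both of which are basis-free.
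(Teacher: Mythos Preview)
Your proposal is correct and follows essentially the same approach as the paper: identify $N^{[k]}_\alpha$ as the transpose of $\oplus_{X\in L_{k-1}(\A)}N_X$, so that complex~\eqref{eq:perspect} is the $\kk$-linear dual of the (generalized) formality complex, and then use that dualizing preserves exactness at the middle term. The paper phrases the last step as ``taking the vector space dual is an exact functor,'' whereas you spell it out via orthogonal complements, but the argument is the same.
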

\begin{proof}
Notice that $N_\alpha^{[3]}$ is the transpose of the matrix $\oplus_{X\in L_2(\A)} N_X$ in Corollary~\ref{cor:HomologicalCriterionFormality}.  Thus the chain complex~\eqref{eq:perspect} is the $\kk$-vector space dual of the chain complex in Corollary~\ref{cor:HomologicalCriterionFormality}.  Since taking the vector space dual is an exact functor, the homology at $\kk^n$ in the chain complex~\eqref{eq:perspect} is the dual of the homology at $\kk^n$ in~\eqref{eq:formalComplex}.  Hence one vanishes if and only if the other vanishes, which completes the proof by Corollary~\ref{cor:HomologicalCriterionFormality} and Theorem~\ref{thm:parameterizingparalleldrawings}.

The result for $k$-generated arrangements when $k>3$ follows in the same way.  We need only observe that $N_\alpha^{[k]}$ is the transpose of the matrix $\oplus_{X\in L_{k-1}(\A)} N_X$ in Remark~\ref{rem:kgenerated}.\qedhere
\end{proof}

\section{Formal line arrangements and rigid planar frameworks}\label{sec:RigidityPlanarFrameworks}

In this section we explain the connection between $\wrep{}$s of line arrangements and the rigidity of planar frameworks.  We start with a matroid $\matroid$ and explore the space of all weak representations of $\matroid$, looking for those representations $\beta:\{1,\ldots,n\}\to V^*$ where the space of non-trivial $\wrep{}$s of $\matroid(\beta)$ \textit{changes dimension}.  This is the direction we must take to explore Terao's conjecture.  In general this task is beyond the scope of this paper.  However, we scratch the surface by explaining how to explore the planar representations of matroids arising from \textit{generically minimally rigid graphs}, which is a staple of rigidity theory.

We take the line of perspectivity to be the line at infinity in $\bbP^2$ and assume that no lines in the arrangement meet each other at infinity.  In the language of~\cite[Chapter~61]{DiscCompGeom18}, a line arrangement and a $\wrep{}$ of a line arrangement from the line at infinity are both \textit{parallel drawings} of the incidence structure induced by the intersection lattice (see also Remark~\ref{rem:Parallel}).  In the plane, there is an equivalence between parallel drawings and infinitesimal rigidity -- modulo some technicalities -- which we found in an article of Whiteley~\cite{Whiteley89}.

We briefly summarize a few key points of rigidity theory and direct the reader to~\cite{SidStJohn17} or~\cite{Roth81} for a more complete introduction.
In rigidity theory, a planar bar-and joint-framework $G(\bp)$ is given by the combinatorial data of a graph $G = (V,E)$ and a placement $\bp:V \to \mathbb{R}^2$ realizing the graph in Euclidean space.  The space of \textit{infinitesimal motions} of $G(\bp)$ is the kernel of its rigidity matrix $R_{G(\bp)}$ which has one row for each edge in $G$ and two columns for each vertex.  An infinitesimal motion assigns a vector $\bv_i \in \mathbb{R}^2$ to each vertex of $G(\bp)$ such that $(\bp_i-\bp_j)\cdot (\bv_i-\bv_j) = 0$ for every edge $ij$ in $G$.

We pass from a framework to a line arrangement as follows.  Let $G$ be a graph with $n$ edges $\{e_1,\ldots,e_n\}$ and $G(\bp)$ a framework in which all edges define distinct lines.  Let $\alpha_i$ be the linear form defining the line along the $i$th bar in $G(\bp)$ ($i=1,\ldots,n$), giving an arrangement $\A_{G(\bp)} = \A(\alpha)$.  If the placement map $\bp$ is chosen generically, then the only points of $\A(\alpha)$ where at least three lines intersect are vertices of $G$. Define the generic matroid $\matroid(G)$ associated to $G$ to be $\matroid(\alpha)$ for a generic placement $\bp$.  Note that any other framework $G(\bq)$ gives rise to a weak representation of $\matroid(G)$ because maintaining the incidence structure of the graph is equivalent to maintaining dependencies among the lines along the bars.

Assuming the line at infinity is generic with respect to $\A_{G(\bp)},$ the weak representations of $\matroid(\alpha)$ that are perspective to $\A_{G(\bp)}$ from the line at infinity are precisely those weak representations consisting of lines that are parallel to the ones they correspond to in the original. We give an isomorphism between the spaces of $\wrep{}$s and infinitesimal motions via an `old engineering technique'  that Whiteley describes in \cite{Whiteley89}. 
\begin{prop}\label{prop:correspondences}
Suppose $G(\p)$ is a framework so that $\A_{G(\p)}$ has the generic matroid associated to $G$.  Then the space of infinitesimal motions of $G(\p)$ and the space of line arrangements which are $\wrep{}$s of $\A_{G(\p)}$ from the line at infinity are isomorphic.  In particular, $\A_{G(\p)}$ is formal if and only if $G(\p)$ is infinitesimally rigid.
\end{prop}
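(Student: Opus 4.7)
My plan is to construct an explicit linear isomorphism via the ``90-degree rotation trick'' that Whiteley~\cite{Whiteley89} attributes to engineering practice, and then match trivial classes on each side. Choose affine coordinates so the line at infinity is $\alpha_0 = z$, and for $\bv = (v_1, v_2) \in \kk^2$ write $\bv^\perp := (-v_2, v_1)$. By Lemma~\ref{lem:Directions} a $\wrep{}$ from the line at infinity is encoded by an $n$-tuple $(\lambda_i)$ satisfying the local relations in Proposition~\ref{prop:ParallelEquations}, while an infinitesimal motion of $G(\p)$ is a tuple $(\bv_v)_{v \in V(G)}$ satisfying $(\bp_u - \bp_w) \cdot (\bv_u - \bv_w) = 0$ for every edge $e_i = \{u,w\}$.

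Given an infinitesimal motion $\bv$, I would define $\Phi(\bv) = \beta$ by letting $\beta_i$ be the affine line through $\bp_u + \bv_u^\perp$ and $\bp_w + \bv_w^\perp$ for each edge $e_i = \{u, w\}$. The infinitesimal motion condition is precisely the statement that $(\bv_u^\perp - \bv_w^\perp) \parallel (\bp_u - \bp_w)$, so $\beta_i \parallel \alpha_i$ and $\A'(\beta)$ is perspective to $\A_{G(\p)}$ from the line at infinity. To verify that $\beta$ is a weak representation up to rank $3$, I would appeal to Proposition~\ref{prop:ParallelEquations}: the generic matroid hypothesis ensures that every flat $X \in L_2(\A_{G(\p)})$ with $n_X \geq 3$ arises from a vertex $v \in V(G)$ of degree at least $3$, and the corresponding $\beta_i$'s all pass through the common point $\bp_v + \bv_v^\perp$, so every length-three relation at $X$ is preserved. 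The inverse $\Psi$ reads the parallel redrawing off of $\beta$: for any vertex $v$ of degree $\geq 2$, the $\wrep{}$ property together with genericity forces the lines $\beta_i$ at $v$ to be concurrent at a unique point $\bp'_v$, and one sets $\bv_v := -(\bp'_v - \bp_v)^\perp$. Vertices of degree $\leq 1$ have residual freedom which matches unconstrained velocities in the rigidity matrix, and a direct check confirms that $\Psi \circ \Phi$ and $\Phi \circ \Psi$ are the identity modulo these harmless choices.

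For the ``in particular'' statement I would match trivial classes. A translation $\bv_v \equiv (a,b)$ pushes forward to a uniform translation of every line of $\A_{G(\p)}$, while an infinitesimal rotation $\bv_v = \theta(-p_{v,2}, p_{v,1})$ satisfies $\bv_v^\perp = -\theta \bp_v$, so $\bp_v + \bv_v^\perp = (1-\theta)\bp_v$ and $\Phi(\bv)$ is a homothety of $\A_{G(\p)}$; both are trivial $\wrep{}$s of type~(1) in Proposition~\ref{prop:defTrivialPerspectivities}. Matching against Proposition~\ref{prop:spaceTrivialpardrawings} shows that $\Phi$ carries the three-dimensional space of trivial motions isomorphically onto the three-dimensional space of trivial $\wrep{}$s, so it descends to an isomorphism on non-trivial quotients. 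Combining this with Corollary~\ref{cor:FormalityAndParallelDrawings} (formality iff no non-trivial $\wrep{}$s) and the definition of infinitesimal rigidity yields the desired equivalence. The main obstacle I anticipate is the bookkeeping around low-degree vertices: the arrangement only remembers the incidences among lines, not the vertex structure of $G$ itself, so making $\Psi$ a genuine inverse of $\Phi$ requires invoking the generic matroid hypothesis carefully to rule out accidental coincidences among the $\beta_i$ and to pair the free choices in $\Psi$ with the unconstrained slots in the rigidity matrix.
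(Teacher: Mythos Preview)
Your proposal is correct and follows essentially the same approach as the paper: both use Whiteley's 90-degree rotation trick $\bq_v = \bp_v + \bv_v^\perp$ to pass between infinitesimal motions and parallel redrawings, and both conclude via Corollary~\ref{cor:FormalityAndParallelDrawings}. You are in fact more thorough than the paper's proof, which compresses the whole argument into the single identity $(\bq_i-\bq_j)\cdot(\bp_i-\bp_j)^\perp = (\bv_i-\bv_j)^\perp\cdot(\bp_i-\bp_j)^\perp$ and the sentence ``such a redrawing is exactly a $\wrep{}$''; your explicit verification via Proposition~\ref{prop:ParallelEquations}, your matching of trivial motions to trivial $\wrep{}$s, and your flagging of the low-degree-vertex bookkeeping fill in details the paper leaves to the reader.
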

\begin{proof}
The key observation is that a $\wrep{}$ of $\A_{G(\bp)}$ from the line at infinity can be obtained by translating each vertex in $G(\bp)$ in such a way that corresponding bars are parallel.  Then we need only to check that the space of all such translations which maintain the directions of the bars is isomorphic to the space of  infinitesimal motions of $G(\bp)$.

 The space of infinitesimal motions of $G(\bp)$ is the set \[
 \{(\bv_1, \ldots, \bv_n) \in \mathbb{R}^{2n} :  (\bp_i-\bp_j)\cdot (\bv_i-\bv_j)= 0 \ \text{for every edge}\ ij \in G\}.
 \]  
 Suppose that $(\bv_1, \ldots, \bv_n) \in \mathbb{R}^{2n}$, and define $G(\bq)$ where $\bq_i = \bp_i+\bv_i^{\perp}$, where `$\perp$' represents the linear transformation of rotation by 90 degrees (consistently, either clockwise or counterclockwise).  This is the `engineering technique' of rotating the $i$th infinitesimal velocity vector by 90 degrees and then translating the $i$th vertex along it. Then
\begin{align*}
    (\bq_i-\q_j)\cdot (\bp_i-\bp_j)^{\perp} & = \big((\bp_i+\bv_i^{\perp})-(\bp_j+\bv_j^{\perp})\big)\cdot (\bp_i-\bp_j)^{\perp}\\
    & = (\bp_i-\bp_j)\cdot (\bp_i-\bp_j)^{\perp}+ (\bv_i^{\perp}-\bv_j^{\perp})\cdot (\bp_i-\bp_j)^{\perp} \\
    & = (\bv_i-\bv_j)^{\perp}\cdot (\bp_i-\bp_j)^{\perp},
\end{align*}
is zero if and only if $(\bp_i-\bp_j) \cdot (\bv_i-\bv_j) = 0$. From this computation, we see that the infinitesimal motions of $G(\bp)$ are in 1-1 correspondence with redrawings of $G(\bp)$ in which corresponding bars are parallel.  Such a redrawing is exactly a $\wrep{}$ of $\A_{G(\bp)}$ from the line at infinity, and the result follows.  The equivalence of the formality of $\A_{G(\bp)}$ and infinitesimal rigidity of $G(\bp)$ now follows from Corollary~\ref{cor:FormalityAndParallelDrawings}.
\end{proof}

With the correspondence of Proposition~\ref{prop:correspondences} in hand, we now consider the problem of searching through the (weak) representations of the generic matroid of a graph $G$.  By Proposition~\ref{prop:correspondences}, the dimension of the space of $\wrep{}$s of $\A_{G(\p)}$ is given by the rank of $R_{G(\bp)}$.  The rank of $R_{G(\p)}$ is maximized on a dense open set $U$ of realizations.  If $\bp \in U$ implies that $\rank R_{G(\bp)} = 2|V|-3$, we say that the graph $G$ is (generically) rigid, and if the deletion of any edge causes the (generic) rank to drop, we say that $G$ is generically minimally rigid.

If $G$ is generically minimally rigid, then there is a polynomial in the joint coordinates, the pure condition \cite{WhiteWhiteleyAlgGeoStresses}, which vanishes when $R_{G(p)}$ drops rank. In certain cases, the pure condition may be used in conjunction with the Grassmann-Cayley algebra to gain  geometric insight into the degenerate cases where frameworks have unexpected motions.  (See \cite{WhiteWhiteleyAlgGeoStresses}.)

There are minimally rigid graphs $G$ for which any framework $G(\bp)$ realizing the generic matroid $\matroid(G)$ has only trivial perspective matroid representations.  Of course this is already clear if $G$ is a triangle. Then $G(\bp)$ has a nontrivial infinitesimal motion if and only if the vertices of the triangle are collinear and the three lines coincide.  However, any framework on $G$ whose matroid is $\matroid(G)$ must consist of three distinct lines and therefore has no nontrivial perspective matroid representations.  In Example~\ref{ex: prism} we see this phenomenon may occur without forcing rank one flats to coincide.

\begin{exm}\label{ex: prism}
In \cite{WhiteWhiteleyAlgGeoStresses} White and Whiteley showed that the graph in Figure~\ref{fig: prism} 
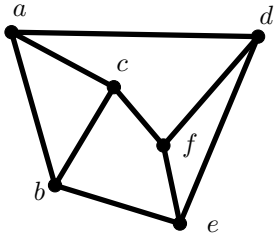
\begin{figure}
\begin{tikzpicture}[line cap=round,line join=round,>=triangle 45,x=1cm,y=1cm]
\draw [line width=2pt] (-2.5785365853658564,3.0373170731707377)-- (-2,1);
\draw [line width=2pt] (-2.5785365853658564,3.0373170731707377)-- (-1.217560975609757,2.3056097560975664);
\draw [line width=2pt] (-2,1)-- (-1.217560975609757,2.3056097560975664);
\draw [line width=2pt] (0.6995121951219533,2.978780487804884)-- (-0.33951219512195074,0.49097560975610216);
\draw [line width=2pt] (0.6995121951219533,2.978780487804884)-- (-0.5590243902439023,1.53);
\draw [line width=2pt] (-0.5590243902439023,1.53)-- (-0.33951219512195074,0.49097560975610216);
\draw [line width=2pt] (-2.5785365853658564,3.0373170731707377)-- (0.6995121951219533,2.978780487804884);
\draw [line width=2pt] (-1.217560975609757,2.3056097560975664)-- (-0.5590243902439023,1.53);
\draw [line width=2pt] (-2,1)-- (-0.33951219512195074,0.49097560975610216);
\draw [fill=black] (-2.5785365853658564,3.0373170731707377) circle (2.5pt);
\draw[color=black] (-2.4687804878048807,3.33) node {$a$};
\draw [fill=black] (-2,1) circle (2.5pt);
\draw[color=black] (-1.8834146341463434,1.2958536585365905) node [label=below left:{$b$}]{} ;
\draw [fill=black] (-1.217560975609757,2.3056097560975664) circle (2.5pt);
\draw[color=black] (-1.1078048780487812,2.598292682926835) node {$c$};
\draw [fill=black] (0.6995121951219533,2.978780487804884) circle (2.5pt) node  {};
\draw[color=black] (0.809268292682929,3.2714634146341526) node {$d$};
\draw [fill=black] (-0.33951219512195074,0.49097560975610216) circle (2.5pt);
\draw[color=black] (-0.22975609756097495,0.7836585365853705) node [label=below right:{$e$}]{};
\draw [fill=black] (-0.5590243902439023,1.53) circle (2.5pt) node [label=right:{$f$}]{};
\end{tikzpicture}
\caption{A graph $G$ for which every $\wrep{}$ with matroid $\matroid(G)$ is trivial.}
\label{fig: prism}
\end{figure}
has an infinitesimal motion if and only if either the points $a,b,c$ are collinear, the points $d,e,f$ are collinear, or if the lines $ad, be, cf$ meet at a point.  The first two conditions cause three lines to coincide.  If $G(\bp)$ is a framework satisfying the third condition, then $\A_{G(\bp)}$ has a flat of rank 2 where the three lines $ad, be,$ and $cf$ meet (in $\bbP^2$).  However, these three lines do not determine a flat of rank two in $\matroid(G)$.

Therefore, if $\A_{G(\bp)}$ has a nontrivial $\wrep{}$, its matroid cannot be equal to $\matroid(G).$
\end{exm}

Despite these cautionary examples, there are many generically rigid graphs $G$ with infinitesimally flexible embeddings possessing the generic matroid $\matroid(G)$.

\begin{exm}\label{ex: Kmn}
 A generic realization of $K_{3,3}$ in $\mathbb{R}^2$, in which the edges are fixed-length bars and the vertices are rotational joints, will be rigid.  Figure \ref{fig: K33WithVectors} shows $K_{3,3}$ realized so that one of the vertex classes is placed along the $x$-axis, and the other class is placed along the $y$-axis.  In this realization (Dixon's mechanism of the first kind), the vertices move along the axes.  Figure~\ref{fig: K33WithVectors} illustrates a perspective representation obtained via the engineer's trick described in the proof of Proposition~\ref{prop:correspondences}.  
 \begin{figure}
     \centering
     \includegraphics[width = 6cm]{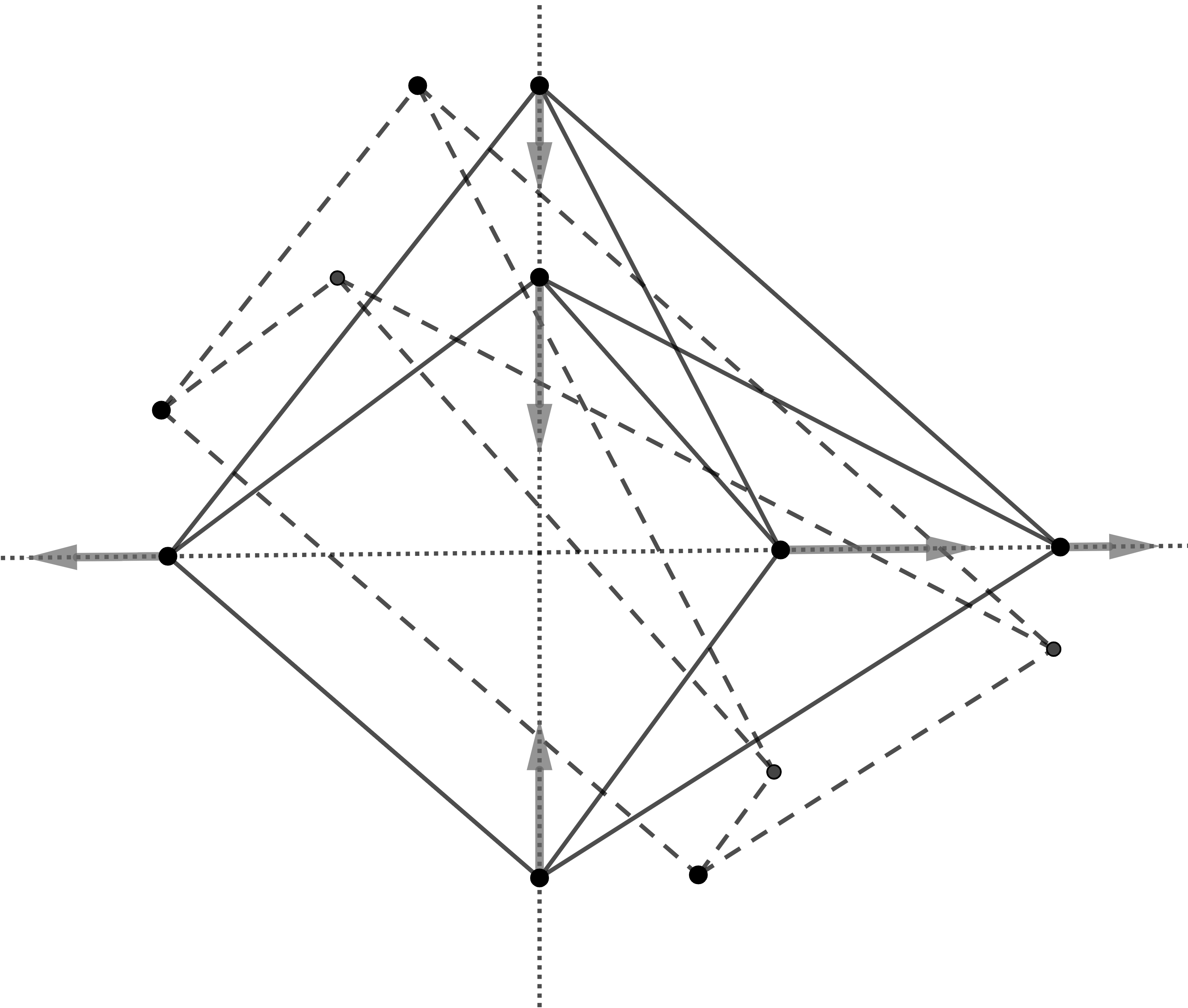}
     \caption{Dixon's mechanism of the first kind and a representation obtained via rotating velocity vectors of vertices $90^{\circ}$ clockwise.}
     \label{fig: K33WithVectors}
 \end{figure}
 This actually yields an infinite family of examples generalizing the example of Ziegler and Yuzvinsky because any $K_{s,t}$ with $s,t \geq 3$ whose two vertex classes are placed along the coordinate axes in an analogous way also has a nontrivial motion \cite{wunderlich}. Repeating the trick, we see that we have an arrangement nontrivially perspective to the first from the line at infinity.
\end{exm}

We present further examples in Section~\ref{sec:ExtremalSyzygies} after we explore the consequences of the existence of nontrivial $\wrep{}$s for syzygies of the module of logarithmic derivations.

\section{The module of logarithmic derivations and Jacobian of a hyperplane arrangement}\label{sec:ModuleDerivations}

In this section we review the constructions of $D(\A)$ and $J_{\A}$ and how their resolutions are related.  We assume, as we do throughout this work, that we are working over a field of characteristic zero.  We make connections to perspective representations in Sections \ref{sec:SaturatingJacobian} and \ref{sec:ExtremalSyzygies}.

If $\A(\alpha)=\cup_{i=1}^n H_i$ is a hyperplane arrangement in $\mathbb{P}^\ell$, then the defining polynomial of $\A$ is $Q = Q(\A):=\prod_{i=1}^n \alpha_i.$ The module of $\kk$-derivations of the polynomial ring $S=\kk[x_0,\ldots,x_\ell]$ is the free $S$-module of rank $\ell+1$, 
\[
\mbox{Der}_{\kk}(S)=\left\lbrace \sum \theta_i \frac{\partial}{\partial x_i} : \theta_i\in S \mbox{ for } i=0,\ldots,\ell\right\rbrace = \bigoplus_{i=0}^\ell S\frac{\partial}{\partial x_i}.
\]  
A derivation $\theta=\sum_{i=0}^\ell \theta_i \frac{\partial}{\partial x_i}\in\mbox{Der}_{\kk}(S)$ acts on a polynomial $F\in S$ to give the polynomial $\theta(F)=\sum_{i=0}^\ell \theta_i \frac{\partial F}{\partial x_i}$. The module of logarithmic derivations is
\[
D(\A)=\{\theta\in \mbox{Der}_{\kk}(S): \theta(Q)\in \langle Q\rangle\},
\]
where $\langle Q \rangle$ is the principal ideal of $S$ generated by $Q$.  The module of logarithmic derivations was originally introduced by Saito~\cite{Saito80} in the analytic category to study singularities of divisors, and this is an algebraic version, due to Terao~\cite{TeraoArrangementsHyperplanesFreenessI80,TeraoArrangementsHyperplanesFreenessII80}. The vector field associated to a derivation in $D(\A)$ is tangent to $\A$ at any smooth point.

The \textit{Euler derivation} $\theta_{E}=\sum x_i\frac{\partial}{\partial x_i}$ is always in $D(\A)$ because $\theta_E(Q) = (\deg Q) Q$.  In fact, as long as the characteristic of the base field does not divide the degree of $Q$ (which is equal to $n$), the sub-module generated by the Euler derivation always splits as a direct summand of  $D(\A)$.  The other summand is the rank-$\ell$ module $D_0(\A)$ consisting of those derivations $\theta\in D(\A)$ with $\theta(Q)=0$.  Since 
\[
D_0(\A)=\left\{ \theta=\sum_{i=0}^\ell \theta_i \frac{\partial}{\partial x_i}\in D(\A): \sum_{i=0}^\ell \theta_i\frac{\partial Q}{\partial x_i} =0\right\},
\]
we have an isomorphism of $D_0(\A)$ with the \textit{syzygy module} of the \textit{Jacobian ideal} $J_{\A}$ of $\A$, where
\[
J_{\A}:=\left\langle \frac{\partial Q}{\partial x_0},\ldots, \frac{\partial Q}{\partial x_\ell} \right\rangle.
\]
We write $\syz(J_{\A})$ for this syzygy module.  The isomorphism comes with a graded shift, namely
$D_0(\A)\cong \syz(J_{\A})(n-1),$
which identifies $\syz(J_{\A})$ as a sub-module of $S(-n+1)^{\ell+1}$.
The module $D_0(\A)$ is a free $S$-module exactly when $J_{\A}$ has projective dimension two, or equivalently when $J_{\A}$ is Cohen-Macaulay of codimension two.  

\begin{exm}
The $A_3$ line arrangement has defining polynomial $Q=xyz(x-y)(x-z)(y-z)$.  By Saito's criterion~\cite[Theorem~4.19]{OT92} the symmetric derivations $\psi_i = x^i\dx+y^i\dy+z^i\dz$ (where $i=1,2,3$)
generate $D(A_3)$ as an $S$-module. 
Since
\[
\begin{array}{rl}
\psi_1(Q) & =6Q\\
\psi_2(Q) & =3(x+y+z)Q\\
\psi_3(Q) & =(3x^2+3y^2+3z^2+xy+xz+yz)Q,
\end{array}
\]
$D_0(A_3) \cong \syz(J_{A_3})(5)$ is generated by $\psi_2-\frac{1}{2}(x+y+z)\psi_1$ (of degree $2$) and $\psi_3-\frac{1}{6}(3x^2+3y^2+3z^2+xy+xz+yz)\psi_1$ (of degree $3$).  These correspond to syzygies on $J_{A_3}$ of degrees 7 and 8 given by the columns of the matrix
\[
\begin{bmatrix}
x^2-\frac{1}{2}(x+y+z)x & x^3-\frac{1}{6}(3x^2+3y^2+3z^2+xy+xz+yz)x\\
y^2-\frac{1}{2}(x+y+z)y & y^3-\frac{1}{6}(3x^2+3y^2+3z^2+xy+xz+yz)y\\
z^2-\frac{1}{2}(x+y+z)z & z^3-\frac{1}{6}(3x^2+3y^2+3z^2+xy+xz+yz)z\\
\end{bmatrix}.
\]
Here, $D_0(A_3)$ is free, so its free resolution has length $1$ and this matrix gives the only map in the resolution.
\end{exm} \medskip

We relate the matroid truncation $\matroid(\alpha)^{[k]}$ for $1\le k \le \rank(\A)$ to certain saturations of the Jacobian ideal.

\begin{defn}\label{def:codimksaturation}
The codimension-$k$ saturation of the Jacobian ideal, which we denote $J^{[k]}_\A$, is the intersection of all primary components of $J_{\A}$ associated to primes of codimension (height) at most $k$.  As a saturation, we can compute this as
$
J^{[k]}_\A=J_\A:f^\infty,
$
where $f$ is in all associated primes of $J_\A$ of codimension greater than $k$ but outside all associated primes of codimension at most $k$.  The codimension $k$ saturation is a well-defined intersection of primary components of $J_\A$ --see~\cite[Proposition~3.13]{Eisenbud-1995} for a general statement regarding these types of intersections. 
\end{defn}

We prove the following formula for $J^{[k]}_{\A}$.

\begin{prop}\label{cor:SaturationHypArr}
For any hyperplane arrangement $\A$,
$
J^{[k]}_\A=\bigcap\limits_{X\in L_k(\A)} J_{\A_X}.
$
\end{prop}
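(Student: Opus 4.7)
The approach is a localization argument built on a key local identity, which I would first establish: for any prime ideal $P_Y$ of $S$ corresponding to a linear subvariety $Y \subseteq \mathbb{P}^\ell$, we have
\[
(J_\A)_{P_Y} = (J_{\A_Y})_{P_Y} \text{ as ideals in } S_{P_Y},
\]
where $\A_Y = \{H_i \in \A : Y \subseteq H_i\}$. The proof factors $Q_\A = Q_Y \cdot R_Y$, with $Q_Y = \prod_{H_i \in \A_Y} \alpha_i$ and $R_Y$ the product of the remaining linear forms; since no factor of $R_Y$ vanishes on $Y$, $R_Y$ is a unit in $S_{P_Y}$. The product rule $\partial_j Q_\A = R_Y\,\partial_j Q_Y + Q_Y\,\partial_j R_Y$, combined with the Euler relation $Q_Y \in J_{\A_Y}$ (valid in characteristic zero), yields both containments. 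As a byproduct, the same calculation shows $J_{\A''} \subseteq J_{\A'}$ as ideals in $S$ whenever $\A' \subseteq \A''$, by writing $Q_{\A''} = Q_{\A'} \cdot T$ and applying the product rule.

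The main argument then proves both containments of the desired equality using the standard criterion that $g \in I$ if and only if $g \in I\,S_P$ for every associated prime $P$ of $I$ (this follows because $(I : g) \subseteq P$ for some associated prime $P$ whenever $g \notin I$). For hyperplane arrangements in characteristic zero, the associated primes of the Jacobian ideal are the linear primes $P_Z$ corresponding to flats $Z$ of the arrangement.

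For $J^{[k]}_\A \subseteq \bigcap_{X \in L_k(\A)} J_{\A_X}$: given $g \in J^{[k]}_\A$ and $X \in L_k(\A)$, I check $g \in J_{\A_X}$ at each associated prime $P_Z$ of $J_{\A_X}$. Each such $Z$ is a flat of $\A_X$, so $Z \supseteq X$ and $(\A_X)_Z = \A_Z$; the Key Lemma then gives $(J_{\A_X})_{P_Z} = (J_{\A_Z})_{P_Z} = (J_\A)_{P_Z}$. Since $\text{codim}(P_Z) = \text{rank}(Z) \leq \text{rank}(X) = k$, every associated prime $P_W \subseteq P_Z$ of $J_\A$ also has codim at most $k$, so $g \in J^{[k]}_\A$ lies in each primary component $Q^{J_\A}_{P_W}$ and therefore in $(J_\A)_{P_Z}$. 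Conversely, for $\bigcap_{X \in L_k(\A)} J_{\A_X} \subseteq J^{[k]}_\A$: given $g$ in the intersection and an associated prime $P_W$ of $J^{[k]}_\A$ (equivalently, of $J_\A$ with $\text{codim}(P_W) \leq k$), I extend $W$ to a rank-$k$ flat $X \in L_k(\A)$ with $X \subseteq W$, obtained by successively intersecting $W$ with hyperplanes of $\A$ not containing it. Then $W$ is a flat of $\A_X$, and the Key Lemma gives $(J_{\A_X})_{P_W} = (J_{\A_W})_{P_W} = (J_\A)_{P_W}$, so $g \in J_{\A_X}$ implies $g \in (J_\A)_{P_W} = (J^{[k]}_\A)_{P_W}$, as required.

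The main obstacle is careful bookkeeping of associated primes and primary components, especially in the presence of potential embedded primes: while the Key Lemma controls localizations cleanly, extracting the correct primary component at each associated prime requires either uniqueness (at minimal primes) or a consistent choice of decomposition (at embedded primes). A secondary concern is ensuring the existence of a rank-$k$ flat $X \subseteq W$ for every flat $W$ of rank at most $k$; this holds whenever $\text{rank}(\A) \geq k$, and the degenerate case $\text{rank}(\A) < k$ is absorbed into the paper's convention $L_k(\A) = L_{\text{rank}(\A)}(\A)$.
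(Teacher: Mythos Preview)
Your approach is essentially the same as the paper's: both prove the localization identity $(J_\A)_{I(X)}=(J_{\A_X})_{I(X)}$ via the product rule and the Euler relation, and then deduce the proposition by a primary-decomposition argument. The paper packages its key lemma slightly more strongly, proving the contraction identity $J_{\A_X}=S\cap (J_\A)_{I(X)}$; in the course of this it shows (via flat base change, after a change of coordinates putting $\A_X$ into the variables $x_1,\ldots,x_r$) that every associated prime of $J_{\A_X}$ is contained in $I(X)$, which is exactly the fact ``the associated primes of $J_\A$ are the linear primes $P_Z$ for flats $Z$'' that you invoke without proof. You should either cite this or supply the flatness argument; once that is in hand, your element-wise check at associated primes and the paper's direct comparison of primary components are equivalent.
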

We have $J^{[k]}_{\A}=J_{\A}$ whenever $k\ge \rank(\A)$. 
The codimension-$\ell$ saturation 
$J^{[\ell]}_{\A}$ is usually called \textit{the} saturation of $J_{\A}$ (that is, the saturation of $J_{\A}$ with respect to the homogeneous maximal ideal), and so we will alternatively denote $J_{\A}^{[\ell]}$ by $J_{\A}^{\sat}$.  

We set some notation and prove an auxiliary lemma before proceeding to the proof of Proposition \ref{cor:SaturationHypArr}.  If $X\in L_k(\A)$, write $I(X)$ for its ideal in $S$ and write $Q_X$ for the defining polynomial $Q(\A_X)$ of the localization $\A_X$.

\begin{lem}\label{lem:JacobianLocalizeAllDim}
For any arrangement $\A$ and any flat $X\in L(\A)$, $J_{\A_X}=S\cap(J_{\A})_{I(X)}$.  That is, $J_{\A_X}$ is the intersection of all components of $J_{\A}$ primary to an ideal contained in $I(X)$.
\end{lem}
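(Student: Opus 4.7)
The plan is to split the proof into two assertions and combine them. First, I will show that the ideals become equal after localizing at $I(X)$, i.e., $(J_\A)\cdot S_{I(X)} = (J_{\A_X})\cdot S_{I(X)}$. Second, I will show that $J_{\A_X}$ is already equal to its own contraction $S\cap (J_{\A_X})\cdot S_{I(X)}$, meaning that every associated prime of $J_{\A_X}$ is contained in $I(X)$. Combining these gives $S\cap (J_\A)_{I(X)} = S\cap (J_{\A_X})_{I(X)} = J_{\A_X}$, which is the lemma. The final sentence of the lemma (the intersection-of-components interpretation) is then just the general characterization of the contraction of a localization.

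For the first assertion, factor $Q = Q_X\cdot R$, where $R$ is the product of the linear forms $\alpha_j$ corresponding to hyperplanes $H_j$ not containing $X$. Each such $\alpha_j$ lies outside $I(X)$, so $R$ is a unit in $S_{I(X)}$. The product rule gives
\[
\frac{\partial Q}{\partial x_i} \;=\; R\,\frac{\partial Q_X}{\partial x_i} + Q_X\,\frac{\partial R}{\partial x_i}.
\]
Euler's identity yields $Q_X \in J_{\A_X}$ in characteristic zero, so the second summand is in $J_{\A_X}$, forcing $\partial Q/\partial x_i \in J_{\A_X}$. This already shows $J_\A \subseteq J_{\A_X}$ before any localization. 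For the reverse inclusion, rearrange the same identity to
\[
R\,\frac{\partial Q_X}{\partial x_i} \;=\; \frac{\partial Q}{\partial x_i} - Q_X\,\frac{\partial R}{\partial x_i},
\]
and observe that $Q_X = Q/R \in (J_\A)\cdot S_{I(X)}$ since $Q \in J_\A$ by Euler and $R$ is a unit. Dividing by $R$ puts $\partial Q_X/\partial x_i$ into $(J_\A)\cdot S_{I(X)}$, establishing the opposite inclusion after localization.

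For the second assertion, choose coordinates so that $I(X) = \langle x_0,\ldots,x_{k-1}\rangle$. Every linear form defining a hyperplane of $\A_X$ lies in $I(X)$ and so is a combination of $x_0,\ldots,x_{k-1}$ alone. Consequently $Q_X \in \kk[x_0,\ldots,x_{k-1}]$, the derivatives $\partial Q_X/\partial x_j$ vanish for $j \ge k$, and $J_{\A_X}$ is the extension to $S$ of a homogeneous ideal $\tilde J \subset \kk[x_0,\ldots,x_{k-1}]$. Because $\tilde J$ is graded, each of its associated primes is graded and thus contained in the maximal graded ideal $\langle x_0,\ldots,x_{k-1}\rangle$ of $\kk[x_0,\ldots,x_{k-1}]$. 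The associated primes of the extension $J_{\A_X} = \tilde J\cdot S$ are the extensions $\tilde P\cdot S$ of the associated primes $\tilde P$ of $\tilde J$ (the extension of rings is faithfully flat), and these all lie in $\langle x_0,\ldots,x_{k-1}\rangle\cdot S = I(X)$. This is exactly the saturation condition $J_{\A_X} = S\cap (J_{\A_X})_{I(X)}$.

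The delicate step is the second one. The minimal primes of $J_{\A_X}$ are transparently ideals of flats containing $X$ and therefore lie in $I(X)$, but ruling out embedded primes that could stray outside $I(X)$ is the real content. The coordinate-change reduction to an essential arrangement centered at the origin, combined with the observation that graded primes are automatically contained in the maximal graded ideal, handles isolated and embedded primes uniformly and bypasses any explicit primary decomposition of the Jacobian.
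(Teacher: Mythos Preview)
Your proof is correct and follows essentially the same approach as the paper's: both arguments use the product rule together with Euler's identity to show $(J_\A)_{I(X)}=(J_{\A_X})_{I(X)}$, and then reduce to showing that all associated primes of $J_{\A_X}$ lie in $I(X)$ via the coordinate change making $\A_X$ an arrangement in the first $k$ variables and the flat-extension behavior of associated primes. Your write-up is, if anything, slightly more explicit than the paper's in two places: you actually justify the containment $J_\A\subseteq J_{\A_X}$ (which the paper asserts without proof), and you spell out why the graded associated primes of $\tilde J$ in $\kk[x_0,\ldots,x_{k-1}]$ must sit inside the irrelevant ideal.
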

\begin{proof}
Set $Q'=Q/Q_X=\prod_{X\not\subset H_i}\alpha_i$.  Since $Q'$ is the product of forms that do not vanish identically on $X$, each of its linear factors is invertible in $S_{I(X)}$, and hence $Q'$ is invertible in $S_{I(X)}$.  Since $Q\in J_{\A}$ by the Euler relation (this is where we need the assumption on the characteristic of the field), $Q_X=Q\cdot (1/Q')\in (J_{\A})_{I(X)}$.  For each variable $x_i\in S$ we have
\begin{equation}\label{eq:ProductRuleAllDim}
\frac{\partial Q}{\partial x_i}=\frac{\partial (Q_X\cdot Q')}{\partial x_i}=Q'\frac{\partial Q_X}{\partial x_i}+Q_X\frac{\partial Q'}{\partial x_i}.
\end{equation}
Since $Q_X\in (J_{\A})_{I(X)}$ and $Q'$ is invertible in $S_{I(X)}$, $\frac{\partial Q_X}{\partial x_i}\in S\cap(J_{\A})_{I(X)}$ for each variable $x_i$ and thus 
\[
J_{\A_X}\subset S\cap(J_{\A})_{I(X)}.
\]
Since $J_{\A}\subset J_{\A_X}$, this also shows that $(J_{\A_X})_{I(X)}=(J_{\A})_{I(X)}$.  Hence to show that
\[
J_{\A_X}= S\cap(J_{\A})_{I(X)},
\]
it suffices to prove that $J_{\A_X}=S\cap (J_{\A_X})_{I(X)}$.  Equivalently, we must show that all associated primes of $J_{\A_X}$ are contained in $I(X)$.

Suppose that the rank of $X=\cl(\alpha_1,\ldots,\alpha_k)$ is $k$.  Without loss of generality, assume that $\alpha_1,\ldots,\alpha_k$ are $x_1,\ldots,x_k$, respectively.
Then we can see that $\A_X$ is defined by a product of linear forms in the variables $x_1,\ldots,x_k$, and thus $J_{\A_X}$ is in fact an extension to $S$ of an ideal in the polynomial ring $R=\kk[x_1,\ldots,x_k]$.  Extensions of this kind are \textit{flat}; in particular if $J=I\otimes_{R} S$ for some ideal $I\subset R$, then the associated primes of $J$ are exactly the extensions to $S$ of the associated primes of $I$ (see~\cite[Theorem~23.2]{Matsumura-1986}).  It follows that any associated prime of $J_{\A_X}$ is contained in $\langle x_1,\ldots,x_k\rangle$.  Hence $J_{\A_X}=S\cap(J_{\A_X})_{I(X)}$.

The last statement follows from~\cite[Proposition~4.9]{Atiyah-Macdonald-1969}.
\end{proof}

\begin{proof}[Proof of Proposition~\ref{cor:SaturationHypArr}]
Put
\[
J'_k=\bigcap_{X\in L_k(\A)} J_{\A_X}.
\]
Suppose that $\mathcal{Q}$ is a primary component of $J_\A$ with codimension $c\le k$.  Then $\mathcal{Q}$ is primary to an ideal of the form $I(X)$, where $X\in L_c(\A)$.  By Lemma~\ref{lem:JacobianLocalizeAllDim}, $J_{\A_X}\subset \mathcal{Q}$ and hence $J'_k\subseteq \mathcal{Q}$.  Since this holds for any primary component of $J_\A$ with codimension at most $k$, $J'_k\subseteq J^{[k]}_\A$.

Now suppose $X\in L_k(\A)$.  By Lemma~\ref{lem:JacobianLocalizeAllDim}, $J_{\A_X}$ is the intersection of all primary components of $J_{\A}$ that are primary to an ideal contained in $I(X)$.  Since $J^{[k]}_{\A}$ is the intersection of all primary components of $J_{\A}$ with codimension at most $k$, we must have $J^{[k]}_{\A}\subseteq J_{\A_X}$.  Since this holds for all $X\in L_k(\A)$, we have $J^{[k]}_\A\subseteq J'_k$.
\end{proof}

\section{Perspective representations and saturating the Jacobian ideal}\label{sec:SaturatingJacobian}

Suppose $\A$ is a hyperplane arrangement with $n$ hyperplanes and defining polynomial $Q$.  The main result of this section is Theorem~\ref{thm:SaturationParallelDrawing}, which shows that polynomials in $J_{\A}^{[k]}$ of degree $n-1$ are in bijection with $\wrep{}$s of $\matroid(\alpha)$ up to rank $k+1$.  Our first step to making this connection is to compare $J_{\A}$ to the \textit{ideal of}$(n-1)$\textit{-fold products}
\[
\mathbb{I}_{n-1}(\A):=\langle Q/\alpha_1,\ldots,Q/\alpha_n\rangle.
\]
The ideals $\mathbb{I}_{n-1}(\A)$ and $J_{\A}$ are both generated in degree $n-1$ and define the singular locus of $\A$ set-theoretically. 
The ideal $\mathbb{I}_{n-1}(\A)$ is Cohen-Macaulay of codimension two -- see ~\cite[Lemma~3.1]{Garrousian-Simis-Tohaneanu-2018} for an explicit Hilbert-Burch resolution.  In fact, the primary decomposition of $\mathbb{I}_{n-1}(\A)$ is given by
\[
\mathbb{I}_{n-1}(\A)=\bigcap_{X\in L_2(\A)} I(X)^{n_X-1},
\]
see~\cite[Remark~2.5]{Garrousian-Simis-Tohaneanu-2018} and~\cite[Proposition~2.3]{Anzis-Garrousian-Tohaneanu-2017}.  From the primary decomposition above, one sees that $\mathbb{I}_{n-1}(\A)$ has exactly the same associated primes of codimension two as $J_{\A}$.  Moreover, $J_{\A_X}\subset I(X)^{n_X-1}$ for every $X\in L_2(\A)$.  Since $J_{\A_Y}\subset J_{\A_X}$ whenever $Y$ is a flat of $\A$ containing the flat $X$, we see by the description in Proposition~\ref{cor:SaturationHypArr} that $J^{[k]}_{\A}\subseteq \mathbb{I}_{n-1}(\A)$ for any $2\le k\le \rank(\A)$.

Our results hinge on writing certain polynomials of degree $n-1$ as linear combinations of the generators of $\mathbb{I}_{n-1}(\A)$.  We first observe that polynomials of degree $n-1$ in $J_{\A}$ are linear combinations of the generators of $\mathbb{I}_{n-1}(\A)$ parameterized by vectors $\bv \in \kk^{\ell+1}$. By Proposition \ref{prop:spaceTrivialpardrawings}, this gives a correspondence between elements of degree $n-1$ in $J_{\A}$ and trivial $\wrep{}$s of $\matroid(\alpha)$.

\begin{lem}\label{lem:jacobianGens}
A polynomial $f$ of degree $n-1$ is in $J_\A$ if and only if there is some vector $\bv\in\kk^{\ell+1}$ so that
\[
f=\sum_{j=1}^n \alpha_j(\bv)\frac{Q}{\alpha_j}.
\]
\end{lem}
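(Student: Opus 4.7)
The plan is a direct computation using the product rule. Since $Q = \prod_{j=1}^n \alpha_j$, for each variable $x_i$ the product rule gives
\[
\frac{\partial Q}{\partial x_i} = \sum_{j=1}^n \frac{\partial \alpha_j}{\partial x_i}\cdot \frac{Q}{\alpha_j}.
\]
Writing $\alpha_j = \sum_{i=0}^\ell a_{ij} x_i$, we have $\partial \alpha_j/\partial x_i = a_{ij}$, so this becomes $\partial Q/\partial x_i = \sum_{j=1}^n a_{ij}\, Q/\alpha_j$.

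The key observation is that $J_{\A}$ is generated by the $\ell+1$ polynomials $\partial Q/\partial x_i$, each of degree exactly $n-1$. Consequently, any homogeneous element of $J_{\A}$ of degree $n-1$ must be a $\kk$-linear combination (not merely an $S$-linear combination) of these generators, since any coefficient of positive degree would push the degree above $n-1$. So $f \in (J_\A)_{n-1}$ if and only if there exist scalars $c_0, c_1, \ldots, c_\ell \in \kk$ with $f = \sum_{i=0}^\ell c_i\, \partial Q/\partial x_i$.

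Substituting the product-rule expression and interchanging the sums,
\[
f = \sum_{i=0}^\ell c_i \sum_{j=1}^n a_{ij} \frac{Q}{\alpha_j} = \sum_{j=1}^n \left(\sum_{i=0}^\ell c_i a_{ij}\right) \frac{Q}{\alpha_j}.
\]
Setting $\bv = (c_0, c_1, \ldots, c_\ell) \in \kk^{\ell+1}$, the inner sum is exactly $\alpha_j(\bv)$, so $f = \sum_{j=1}^n \alpha_j(\bv)\, Q/\alpha_j$. The converse direction simply reverses this computation: given $\bv = (v_0, \ldots, v_\ell)$, the polynomial $\sum_j \alpha_j(\bv)\, Q/\alpha_j$ equals $\sum_i v_i\, \partial Q/\partial x_i$ and so lies in $J_\A$.

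There is no real obstacle here; the only thing to take care with is the degree argument justifying that the coefficients $c_i$ can be chosen in $\kk$ rather than in $S$, but this follows immediately from homogeneity since every generator $\partial Q/\partial x_i$ has degree $n-1$.
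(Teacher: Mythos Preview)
Your proof is correct and follows essentially the same route as the paper's: both compute $\partial Q/\partial x_i = \sum_j \alpha_j(\mathbf{e}_i)\,Q/\alpha_j$ via the product rule and then take $\kk$-linear combinations to identify $(J_\A)_{n-1}$ with the set of polynomials $\sum_j \alpha_j(\bv)\,Q/\alpha_j$. If anything, your version is slightly more explicit in justifying why the coefficients must lie in $\kk$ rather than $S$, a point the paper leaves to the reader.
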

\begin{proof}
Let $\mathbf{e}_i$ be the standard basis vector of $\kk^{\ell+1}$ that has a $1$ in position $i$ and zeros elsewhere.  We have
\[
\frac{\partial Q}{\partial x_i}=\sum_{j=1}^n \alpha_j(\mathbf{e}_i) \frac{Q}{\alpha_j}
\]
for $i=0,\ldots,\ell$.  Now extend linearly to get the result.  Explicitly, substitute this expression into the left side of the following equation and interchange summations to get:
\[
\sum_{i=0}^\ell v_i\frac{\partial Q}{\partial x_i} =\sum_{j=1}^n \alpha_j(\mathbf{v}) \frac{Q}{\alpha_j},
\]
where $v_0,\ldots,v_\ell \in\kk$ and $\mathbf{v}=(v_0,\ldots,v_\ell)$. This shows that polynomials of the given form are precisely those polynomials in $J_\A$.
\end{proof}

Since $J^{[k]}_{\A}\subset \mathbb{I}_{n-1}(\A)$, it makes sense to ask which elements of degree $n-1$ in $\mathbb{I}_{n-1}(\A)$ lie in $J_{\A}^{[k]}$.  As we see below, these elements correspond to  $\wrep{}$s of $\matroid(\alpha)$ up to rank $k$.

\begin{thm}\label{thm:SaturationParallelDrawing}
Let $\A(\alpha)\subset\bbP^\ell$ be a hyperplane arrangement with defining polynomial $\Q(\A)$, and suppose that $H_0=\mathbb{V}(\alpha_0)$ is in rank $k+1$ linearly general position with respect to $\A$.  Put
\[
f=\sum_{i=1}^n \lambda_i\frac{\Q(\A)}{\alpha_i}
\]
for some choice of $\lambda_1,\ldots,\lambda_n$.  Define $\beta:\{1,\ldots,n\}\to V^*$ by $\beta_i=\alpha_i+\lambda_i\alpha_0$.  Then $f\in J^{[k]}_{\A}$ if and only if $\A'(\beta)$ is  a  $\wrep{}$ of $\matroid(\alpha)$ from $H_0$ up to rank $k+1$. 
\end{thm}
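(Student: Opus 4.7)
The plan is to reduce everything to a single flat $X\in L_k(\A)$, match local membership in $J_{\A_X}$ to condition~\eqref{eq:rels} of Proposition~\ref{prop:ParallelEquations}, and then quantify over all $X$. By Proposition~\ref{cor:SaturationHypArr}, $f\in J^{[k]}_\A$ if and only if $f\in J_{\A_X}$ for every $X\in L_k(\A)$. On the $\wrep{}$ side, Proposition~\ref{prop:ParallelEquations} (with rank parameter $k+1$) says that $\beta$ is a $\wrep{}$ from $H_0$ up to rank $k+1$ if and only if the local relation condition~\eqref{eq:rels} holds at every $X\in L_k(\A)$; here the hypothesis that $H_0$ is in rank $k+1$ general linear position is what allows us to apply Proposition~\ref{prop:ParallelEquations}. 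So it suffices to prove, for each fixed $X\in L_k(\A)$, that $f\in J_{\A_X}$ if and only if condition~\eqref{eq:rels} holds at $X$.

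Fix $X\in L_k(\A)$ and write $Q(\A)=Q_X\cdot Q'$, where $Q_X$ is the defining polynomial of $\A_X$ and $Q'=\prod_{X\not\subset H_i}\alpha_i$. Splitting the sum defining $f$ according to whether $X\subseteq H_i$ gives
\[
f = Q'\cdot g_X + Q_X\cdot h_X, \qquad g_X=\sum_{X\subset H_i}\lambda_i\frac{Q_X}{\alpha_i}, \quad h_X=\sum_{X\not\subset H_i}\lambda_i\frac{Q'}{\alpha_i}.
\]
By the Euler relation, $Q_X\in J_{\A_X}$, so $Q_X\cdot h_X\in J_{\A_X}$ and the question reduces to whether $Q'\cdot g_X\in J_{\A_X}$. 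By Lemma~\ref{lem:JacobianLocalizeAllDim}, $J_{\A_X}=S\cap (J_\A)_{I(X)}$, and each factor of $Q'$ is a unit in $S_{I(X)}$ since it does not vanish on $X$. Hence $Q'$ is invertible in the localization, so $Q'\cdot g_X\in J_{\A_X}$ if and only if $g_X\in J_{\A_X}$.

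Since $g_X$ has degree $n_X-1$, apply Lemma~\ref{lem:jacobianGens} to the arrangement $\A_X$: $g_X\in J_{\A_X}$ if and only if there exists $\bv_X\in\kk^{\ell+1}$ with $g_X=\sum_{X\subset H_j}\alpha_j(\bv_X)\,Q_X/\alpha_j$. The polynomials $Q_X/\alpha_j$ with $X\subset H_j$ are linearly independent over $\kk$ (evaluate modulo $\alpha_j$), so this forces $\lambda_j=\alpha_j(\bv_X)$ for every $j$ with $X\subset H_j$. In other words, the tuple $(\lambda_j)_{X\subset H_j}$ lies in the image of $M(\alpha_X)^T$, i.e.\ in the row space of $M(\alpha_X)$.

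Finally, over the field $\kk$, the row space of $M(\alpha_X)$ is the orthogonal complement of $\ker M(\alpha_X)$; and $(\gamma_j)\in\ker M(\alpha_X)$ means precisely that $\sum_{X\subset H_j}\gamma_j\alpha_j=0$. Therefore $(\lambda_j)$ lies in the row space of $M(\alpha_X)$ exactly when $\sum_{X\subset H_j}\gamma_j\lambda_j=0$ whenever $\sum_{X\subset H_j}\gamma_j\alpha_j=0$, which is condition~\eqref{eq:rels} at $X$. Assembling these equivalences over all $X\in L_k(\A)$ completes the proof. The step I expect to require the most care is the passage from $Q'\cdot g_X\in J_{\A_X}$ to $g_X\in J_{\A_X}$, since it depends on the precise form of Lemma~\ref{lem:JacobianLocalizeAllDim} rather than on $J_{\A_X}$ being prime; everything else is either a direct invocation of an earlier result or elementary linear algebra.
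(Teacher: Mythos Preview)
Your proof is correct and follows the same overall architecture as the paper: reduce via Proposition~\ref{cor:SaturationHypArr} to individual flats $X\in L_k(\A)$, split $f=Q'g_X+Q_Xh_X$, use the Euler relation and invertibility of $Q'$ in $S_{I(X)}$ (via Lemma~\ref{lem:JacobianLocalizeAllDim}) to reduce to whether $g_X\in J_{\A_X}$, and then invoke Lemma~\ref{lem:jacobianGens}. The forward direction and the local reduction are essentially identical to the paper's argument.

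The one genuine difference is in the reverse implication. Starting from the assumption that $\beta$ is a $\wrep{}$ up to rank $k+1$, the paper argues geometrically: it applies Lemma~\ref{thm: flat map} and Lemma~\ref{lem:lingen2} to show that the image flat $Y$ of $X$ is not contained in $H_0$, deduces that the restriction $\beta_Y$ is a trivial $\wrep{}$ of $\A_X$ of the type in Proposition~\ref{prop:defTrivialPerspectivities}(2), and then invokes Proposition~\ref{prop:spaceTrivialpardrawings} to produce a vector $\bv$ with $\lambda_i=\alpha_i(\bv)$ for all $i$ with $X\subset H_i$. You replace this entire chain with the elementary observation that the row space of $M(\alpha_X)$ equals the orthogonal complement of its kernel, so condition~\eqref{eq:rels} at $X$ is equivalent to $(\lambda_j)_{X\subset H_j}\in\im M(\alpha_X)^T$, i.e.\ to the existence of such a $\bv_X$. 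Your route is shorter, handles both directions symmetrically as a chain of equivalences, and avoids four auxiliary results; the paper's route has the compensating virtue of making explicit the geometric content that, locally at each flat $X$, every $\wrep{}$ is automatically trivial.
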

\begin{proof}
If $X\in L_k(\A)$, define
\[
f_X=\sum_{\alpha_i\in I(X)} \lambda_i\frac{Q_X}{\alpha_i}.
\]
First we show that if $X\in L_k(\A)$, $f \in J_{\A_X}$ if and only if $f_X \in J_{\A_X}$.  Renumbering the hyperplanes of $\A$ if necessary, we assume that $\alpha_1,\ldots,\alpha_t$ vanish along $X$ and $\alpha_{t+1},\ldots,\alpha_{n}$ do not vanish identically on $X$.  Note that
\[
f=f_XQ'+\left(\sum_{i=t+1}^{n} \lambda_i\frac{Q'}{\alpha_i}\right)\cdot Q_X,
\]
where $Q_X=Q(\A_X)=\prod_{i=1}^t \alpha_i$ and $Q'=\prod_{i=t+1}^n \alpha_i$. 
Since $Q_X \in J_{\A_X}$ by the Euler relation (we are in characteristic zero), 
we see that  $f\in J_{\A_X}$ if and only if $f_XQ' \in J_{\A_X}$.  Moreover, we will show that $f_XQ'\in J_{\A_X}$ if and only if $f_X\in J_{\A_X}$.  One direction of this statement is obvious, so we explain why $f_XQ'\in J_{\A_X}$ implies that $f_X\in J_{\A_X}$.  Observe that $Q'$ is invertible in the localization $S_{I(X)}$, so if $f_XQ'\in J_{\A_X}$ then it follows that $f_X=\frac{f_XQ'}{Q'}\in (J_{\A})_{I(X)}$.  Since $f_X\in S$, we also have $f_X\in S\cap (J_{\A})_{I(X)}$.  By Lemma~\ref{lem:JacobianLocalizeAllDim}, $J_{\A_X}=S\cap (J_{\A})_{I(X)}$, so $f_X\in J_{\A_X}$.  Hence $f\in J_{\A_X}$ if and only if $f_X\in J_{\A_X}$.

Now suppose that $f\in J^{[k]}_\A$.  We prove that $\A'(\beta)$ with $\beta$ defined by $\beta_i=\alpha_i+\lambda_i\alpha_0$ is a $\wrep{}$ of $\matroid(\alpha)$ up to rank $k+1$ using Proposition \ref{prop:ParallelEquations}.
Fix $X\in L_k(\A)$.  Since $f\in J^{[k]}_\A$, it follows from Proposition~\ref{cor:SaturationHypArr} that  $f\in J_{\A_X}$.  From the discussion above, $f_X\in J_{\A_X}$.  By Lemma~\ref{lem:jacobianGens},
\[
\left(J_{\A_X}\right)_{t-1}=
\left\lbrace \sum_{i=1}^t \alpha_i(\bv)\frac{Q_X}{\alpha_i}: \bv\in \kk^{\ell+1}\right\rbrace.
\]
Hence $f_X\in J_{\A_X}$ implies there is a vector $\bv\in\kk^{\ell+1}$ so that $\lambda_i=\alpha_i(\bv)$ for $i=1,\ldots,t$. Thus whenever there are constants $\gamma_1,\ldots,\gamma_t$ so that $\sum_{i=1}^t \gamma_i\alpha_i=0$, then
\[  
\sum_{i=1}^t \gamma_i \lambda_i = \sum_{i=1}^t \gamma_i \alpha_i (\bv) =0.
\]
Since this holds for every $X \in L_k(\A)$, $\A'(\beta)$ is a $\wrep{}$ of $\matroid(\alpha)$ up to rank $k+1$ by Proposition~ \ref{prop:ParallelEquations}.

For the reverse direction, suppose that $\beta$, defined by $\beta_i=\alpha_i+\lambda_i\alpha_0$, yields a $\wrep{}$ of $\matroid(\alpha)$ up to rank $k+1$.  We show that the polynomial $f$ in the statement of the theorem satisfies $f\in J^{[k]}_\A$.  By Proposition~\ref{cor:SaturationHypArr}, it suffices to show that $f\in J_{\A_X}$ for every $X\in L_k(\A)$.  To this end, suppose $X\in L_k(\A)$ and let $\alpha_1,\ldots,\alpha_t$ be all of the linear forms of $\alpha$ which vanish along $X$.  
Since $\beta$ is a $\wrep{}$ of $\matroid(\alpha)$ up to rank $k+1$, $\beta_1,\ldots,\beta_t$ define a flat $Y$ of rank at most the rank of $X$ (which is $k$) by Lemma~\ref{thm: flat map}.  Now consider the restrictions $\alpha_X:\{1,\ldots,t\}\to V^*$ and $\beta_Y:\{1,\ldots,t\}\to V^*$.  Since $\beta$ is a $\wrep{}$ of $\matroid(\alpha)$ up to rank $k+1$, $\beta_Y$ is a $\wrep{}$ of $\matroid(\alpha_X)$ up to rank $k+1$.
By Lemma~\ref{lem:lingen2}, $Y\not\subseteq H_0$.  Hence the linear forms $\beta_1,\ldots,\beta_t$ are determined by a (any) point $q\in Y\setminus H_0$ and the codimension two linear spaces $H_0\cap H_1,\ldots,H_0\cap H_t$.  It follows that $\beta_Y$ is a trivial $\wrep{}$ of $\A_X$ of the type in Proposition~\ref{prop:defTrivialPerspectivities} (2).  By Proposition~\ref{prop:spaceTrivialpardrawings}, there is a vector $\bv\in\kk^{\ell+1}$ so that $\beta_1=\alpha_1+\alpha_1(\bv)\alpha_0,\ldots,\beta_t=\alpha_t+\alpha_t(\bv)\alpha_0$.  Hence $\lambda_i=\alpha_i(\bv)$ for each $i=1,\ldots,t$ and we conclude that $f_X \in (J_{\A_X})_{t-1}$
by Lemma~\ref{lem:jacobianGens}.  Thus $f\in J_{\A_X}$.  Since this holds for all $X\in L_k(\A),$ $f \in J^{[k]}_{\A}$ by Proposition~\ref{cor:SaturationHypArr}.
\end{proof}

\begin{cor}\label{cor:NTPerspectivitiesAndSaturationLD}
The vector space $(J^{[k]}_{\A}/J_{\A})_{n-1}$ is isomorphic to the space of non-trivial $\wrep{}$s of $\A(\alpha)$ up to rank $k+1$.
\end{cor}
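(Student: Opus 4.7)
The plan is to realize both $(J^{[k]}_{\A})_{n-1}$ and $(J_{\A})_{n-1}$ as images under a single linear isomorphism — of the space of $\wrep{}$s and the subspace of trivial $\wrep{}$s, respectively — and then pass to the quotient. Concretely, I would introduce the $\kk$-linear map $\Phi\colon \kk^n \to S_{n-1}$ sending $(\lambda_1,\dots,\lambda_n)$ to $\sum_{i=1}^n \lambda_i\,Q(\A)/\alpha_i$. First I would verify that $\Phi$ is injective: the polynomials $Q/\alpha_1,\dots,Q/\alpha_n$ are linearly independent because $Q/\alpha_j$ vanishes on every $H_i$ with $i\ne j$ but not on $H_j$, so evaluating $\sum\lambda_i Q/\alpha_i$ at a general point of $H_j$ forces $\lambda_j=0$. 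Thus $\Phi$ is an isomorphism onto $(\mathbb{I}_{n-1}(\A))_{n-1}$, which by the containment $J^{[k]}_{\A}\subseteq \mathbb{I}_{n-1}(\A)$ noted in the excerpt contains $(J^{[k]}_{\A})_{n-1}$.

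Next, I would fix a hyperplane $H_0=\mathbb{V}(\alpha_0)$ in rank $k+1$ general linear position with respect to $\A$ and set $\beta_i=\alpha_i+\lambda_i\alpha_0$. Theorem~\ref{thm:SaturationParallelDrawing} then says that $\Phi(\lambda)\in (J^{[k]}_{\A})_{n-1}$ if and only if $\beta$ is a $\wrep{}$ of $\matroid(\alpha)$ from $H_0$ up to rank $k+1$, so $\Phi$ restricts to an isomorphism between the space of such $\wrep{}$s and $(J^{[k]}_{\A})_{n-1}$. Similarly, Lemma~\ref{lem:jacobianGens} identifies $(J_{\A})_{n-1}$ with the $\Phi$-image of tuples of the form $\lambda_i=\alpha_i(\bv)$ for $\bv\in\kk^{\ell+1}$, and by Proposition~\ref{prop:spaceTrivialpardrawings} these are precisely the tuples parameterizing trivial $\wrep{}$s from $H_0$.

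Since $J_{\A}\subseteq J^{[k]}_{\A}$ and trivial $\wrep{}$s form a subspace of all $\wrep{}$s, passing to the quotient yields
\[
(J^{[k]}_{\A}/J_{\A})_{n-1}\;\cong\;\frac{\{\wrep{}\text{s of }\A\text{ from }H_0\text{ up to rank }k+1\}}{\{\text{trivial }\wrep{}\text{s of }\A\text{ from }H_0\}},
\]
which is by definition the space of non-trivial $\wrep{}$s up to rank $k+1$ (well-defined independent of $H_0$ by Corollary~\ref{cor:NSpP}). I do not expect a substantive obstacle here: all of the real content has already been packaged in Theorem~\ref{thm:SaturationParallelDrawing} and Lemma~\ref{lem:jacobianGens}, and this corollary is essentially the bookkeeping needed to convert those preimage descriptions into a quotient isomorphism; the only fresh input is the one-line linear independence check for the $Q/\alpha_i$.
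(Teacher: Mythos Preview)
Your proposal is correct and follows essentially the same route as the paper: both arguments use Theorem~\ref{thm:SaturationParallelDrawing} to identify $(J^{[k]}_{\A})_{n-1}$ with the space of $\wrep{}$s up to rank $k+1$, use Lemma~\ref{lem:jacobianGens} together with Proposition~\ref{prop:spaceTrivialpardrawings} to identify $(J_{\A})_{n-1}$ with the trivial ones, and then pass to the quotient. The only difference is cosmetic---you explicitly name the map $\Phi$ and verify the linear independence of the $Q/\alpha_i$, whereas the paper packages the quotient via the homology description in Theorem~\ref{thm:parameterizingparalleldrawings}.
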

\begin{proof}
By Theorem~\ref{thm:SaturationParallelDrawing}, $(J^{[k]}_{\A})_{n-1}$ is isomorphic to the space of $n$-tuples $(\lambda_i)_{i=1}^n$ which are in the kernel of the matrix $N^{[k+1]}_\alpha$ (see Corollary~\ref{cor:NSpP}).  Furthermore, by Lemma~\ref{lem:jacobianGens} and Proposition~\ref{prop:defTrivialPerspectivities}, $(J_{\A})_{n-1}$ is isomorphic to the space of $n$-tuples $(\lambda_i)_{i=1}^n$ which are the image of $M(\alpha)^T$.  The result now follows from Theorem~\ref{thm:parameterizingparalleldrawings}.
\end{proof}

\begin{cor}\label{cor:MaxPDim}
If $\A\subset\mathbb{P}^\ell$ has non-trivial $\wrep{}$s up to rank $k+1$, then $D(\A)$ has projective dimension at least $k-1$.  Equivalently, if $\A$ is not $k$-generated, then $D(\A)$ has projective dimension at least $k-1$.
\end{cor}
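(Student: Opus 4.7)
The plan is to translate the hypothesis into a statement about associated primes of the Jacobian ideal $J_{\A}$, after which the bound on projective dimension falls out of the Auslander-Buchsbaum formula. By Corollary~\ref{cor:NTPerspectivitiesAndSaturationLD}, the existence of a non-trivial $\wrep{}$ of $\A$ up to rank $k+1$ is equivalent to the condition $(J^{[k]}_{\A}/J_{\A})_{n-1}\neq 0$, which in particular forces $J^{[k]}_{\A}\neq J_{\A}$. Invoking Definition~\ref{def:codimksaturation}, this strict containment holds if and only if $J_{\A}$ has at least one associated prime $P$ of codimension strictly greater than $k$.

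Given such an associated prime $P$, I would appeal to the standard inequality $\operatorname{depth}_S(M)\le \dim(S/P)$ for every $P\in\operatorname{Ass}_S(M)$, applied to $M=S/J_{\A}$. This yields
\[
\operatorname{depth}_S(S/J_{\A})\le \dim(S/P)=(\ell+1)-\codim(P)\le \ell-k.
\]
The Auslander-Buchsbaum formula then gives $\operatorname{pd}_S(S/J_{\A})\ge k+1$, whence $\operatorname{pd}_S(J_{\A})\ge k$, and the short exact sequence $0\to\syz(J_{\A})\to S(-n+1)^{\ell+1}\to J_{\A}\to 0$ produces $\operatorname{pd}_S(\syz(J_{\A}))\ge k-1$. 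Combining this with the splitting $D(\A)=D_0(\A)\oplus S\theta_E$ and the graded isomorphism $D_0(\A)\cong \syz(J_{\A})(n-1)$ recalled in Section~\ref{sec:ModuleDerivations} then produces $\operatorname{pd}_S(D(\A))\ge k-1$, as claimed. The equivalent reformulation in terms of $k$-generated arrangements will be immediate from Corollary~\ref{cor:FormalityAndParallelDrawings}.

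Since all the substantive work has already been done in constructing the dictionary between $\wrep{}$s and elements of $J^{[k]}_{\A}/J_{\A}$ (culminating in Corollary~\ref{cor:NTPerspectivitiesAndSaturationLD}), I do not anticipate any serious obstacle in this final step: the remainder is a routine application of Auslander-Buchsbaum. The only place warranting brief attention is the implication that $J^{[k]}_{\A}/J_{\A}\neq 0$ actually produces an embedded prime of $J_{\A}$ of codimension at least $k+1$ (rather than merely a redundancy in the primary decomposition), but this is a direct consequence of the description of $J^{[k]}_{\A}$ as the intersection of the primary components of $J_\A$ of codimension at most $k$.
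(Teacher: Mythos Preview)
Your proposal is correct and follows essentially the same route as the paper's proof: both deduce $J^{[k]}_{\A}\neq J_{\A}$ from the hypothesis, infer an associated prime of codimension at least $k+1$, and conclude $\operatorname{pd}_S(S/J_{\A})\ge k+1$ before passing to $D_0(\A)$ as a second syzygy and to $D(\A)$ via the Euler splitting. The only difference is expository: you spell out the depth bound $\operatorname{depth}_S(S/J_{\A})\le\dim(S/P)$ and the Auslander--Buchsbaum formula explicitly, whereas the paper compresses this into the single clause ``which implies that $S/J_{\A}$ has projective dimension at least $k+1$.''
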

\begin{proof}
That the two given statements are equivalent follows from Corollary~\ref{cor:FormalityAndParallelDrawings}.  We prove the first statement.  If $\A\subset\mathbb{P}^\ell$ admits a non-trivial  $\wrep{}$ up to rank $k+1$, then $J^{[k]}_\A\neq J_\A$ by Theorem~\ref{thm:SaturationParallelDrawing}.  Hence $J_\A$ has associated primes of codimension at least $k+1$, which implies that $S/J_\A$ has projective dimension at least $k+1$.  Since $D_0(\A)$ is the second syzygy module
of $S/J_{\A}$, $D_0(\A)$ has projective dimension at least $k-1$.  Since $D_0(\A)$ is a direct summand of $D(\A)$ (as an $S$-module), the result follows. 
\end{proof}

For the next result, a hyperplane $H\in\A$ is called a \textit{separator} if $\rank(\A')<\rank(\A)$, where $\A'$ is obtained from $\A$ by removing $H$.

\begin{cor}\label{cor:GeneralPositionPdim}
Suppose $\A\subset \mathbb{P}^\ell$ is a hyperplane arrangement.  Let $H_i$ be a hyperplane of $\A$ which is not a separator, and suppose that $H_i$ is in rank $k$ general linear position with respect to the hyperplane arrangement $\A'=\A-H_i$ formed by removing $H_i$.  Then 
\[
\frac{Q(\A)}{\alpha_i}\in J^{[k]}_\A\setminus J_{\A}.
\]
In particular, the projective dimension of $D(\A)$ is at least $k-1$.
\end{cor}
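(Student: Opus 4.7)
The plan is to apply Theorem~\ref{thm:SaturationParallelDrawing} with coefficients $\lambda_j=\delta_{ij}$, so that $f=\sum_{j=1}^n\lambda_j Q(\A)/\alpha_j = Q(\A)/\alpha_i$. I would pick an auxiliary hyperplane $H_0=\mathbb{V}(\alpha_0)\not\in\A$ in rank $k+1$ general linear position with respect to $\A$ (which exists by genericity) and set $\beta_i=\alpha_i+\alpha_0$ and $\beta_j=\alpha_j$ for $j\neq i$. By Theorem~\ref{thm:SaturationParallelDrawing}, putting $f\in J^{[k]}_\A$ reduces to showing that $\beta$ is a $\wrep{}$ of $\matroid(\alpha)$ from $H_0$ up to rank $k+1$, and by Corollary~\ref{cor:NTPerspectivitiesAndSaturationLD} (together with Proposition~\ref{prop:spaceTrivialpardrawings}), $f$ will in addition fail to lie in $J_\A$ precisely when $\beta$ is a \emph{non-trivial} $\wrep{}$; both these checks can be done separately.

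For the $\wrep{}$ property I would invoke Proposition~\ref{prop:ParallelEquations}: for each $X\in L_k(\A)$ and each scalar relation $\sum_{H_j\supseteq X}\gamma_j\alpha_j=0$, I need $\sum_{H_j\supseteq X}\gamma_j\lambda_j = \gamma_i\cdot\mathbf{1}_{\{X\subseteq H_i\}}=0$. The case $X\not\subseteq H_i$ is vacuous since all of the relevant $\lambda_j$ vanish. If $X\subseteq H_i$ and some relation had $\gamma_i\neq 0$, then $\alpha_i$ would lie in the $\kk$-span of $\{\alpha_j:H_j\in\A_X\setminus\{H_i\}\}$, forcing $\alpha_i$ to vanish on the flat $Y:=\bigcap_{H_j\in\A_X\setminus\{H_i\}} H_j$ of $\A'$. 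Since $Y\supseteq X$ we have $\rank(Y)\le k$, so $Y\subseteq H_i$ would contradict the assumption that $H_i$ is in rank $k$ general linear position with respect to $\A'$. Hence $\beta$ is a $\wrep{}$ of $\matroid(\alpha)$ up to rank $k+1$, and $Q(\A)/\alpha_i\in J^{[k]}_\A$.

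Non-triviality of $\beta$ follows from Proposition~\ref{prop:spaceTrivialpardrawings}: were $\beta$ trivial, there would exist $\bv\in\kk^{\ell+1}$ with $\alpha_j(\bv)=\lambda_j=\delta_{ij}$. Because $H_i$ is not a separator, $\rank(\A')=\rank(\A)$, which means $\alpha_i$ is a $\kk$-linear combination of the $\alpha_j$ with $j\neq i$; evaluating such a relation at $\bv$ gives $\alpha_i(\bv)=0$, contradicting $\alpha_i(\bv)=1$. Consequently $Q(\A)/\alpha_i\notin J_\A$, and because $\A$ admits the non-trivial $\wrep{}$ $\beta$ up to rank $k+1$, Corollary~\ref{cor:MaxPDim} yields $\mathrm{pdim}\,D(\A)\ge k-1$.

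The main obstacle I anticipate is the $\wrep{}$ verification when $X\subseteq H_i$: locating the correct flat $Y$ of $\A'$ and extracting from the hypothesis that $H_i$ is in rank $k$ general linear position with respect to $\A'$ exactly the assertion that $Y\not\subseteq H_i$ is the step that makes the proof go through. Everything else is essentially a book-keeping application of the already-established equivalences between Jacobian saturations, $\wrep{}$s, and their trivial counterparts.
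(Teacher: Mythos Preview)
Your argument is correct and follows essentially the same route as the paper's own proof: both set $\lambda_j=\delta_{ij}$, verify via Proposition~\ref{prop:ParallelEquations} that the resulting $\beta$ is a $\wrep{}$ up to rank $k+1$, argue non-triviality from the non-separator hypothesis, and then appeal to Theorem~\ref{thm:SaturationParallelDrawing} and Corollary~\ref{cor:MaxPDim}. In fact your treatment of the $\wrep{}$ verification is slightly more careful than the paper's: the paper asserts outright that no $X\in L_k(\A)$ lies in $H_i$, whereas you correctly allow $X\subseteq H_i$ and show instead that any such relation must have $\gamma_i=0$ (since otherwise the flat $Y$ of $\A'$ would witness a failure of rank $k$ general position), which is exactly what Proposition~\ref{prop:ParallelEquations} requires.
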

\begin{proof}
Suppose $H_0$ is a hyperplane in rank $k$ linearly general position with respect to $\A$.  We claim that, for any $\lambda\neq 0\in\kk$, the map $\beta:\{1,\ldots,n\}\to V^*$ defined by
\[
\beta_j=
\left\lbrace
\begin{array}{ll}
\alpha_j+\lambda\alpha_0 & \mbox{ if } j=i \\
\alpha_j & \mbox{otherwise}
\end{array}
\right.
\]
is a non-trivial $\wrep{}$ of $\matroid(\alpha)$ up to rank $k+1$.  That $\beta$ is a $\wrep{}$ of $\matroid(\alpha)$ up to rank $k+1$ follows immediately from Lemma~\ref{lem:Directions} and Proposition~\ref{prop:ParallelEquations}, since $X\not\subset H_i$ for any flat $X\in L_k(\A)$ by assumption.  If $\beta$ is a trivial $\wrep{}$ of $\matroid(\alpha)$ then, by Proposition~\ref{prop:spaceTrivialpardrawings} there is a vector $\bv\in\kk^{\ell+1}$ so that $\beta_j=\alpha_j+\alpha_j(\bv)\alpha_0$ for all $j=1,\ldots,n$.  Thus $\alpha_j(\bv)=0$ if $j\neq i$ and $\alpha_i(\bv)=\lambda\neq 0$.  So $\bv$ is not the zero vector and thus it corresponds to a point $[\bv]\in\bbP^\ell$ so that $[\bv]\in X=\cap_{j\neq i} H_j$ and $[\bv]\notin H_i$.  It follows that $X\not\subset H_i$, and so $\rank(\A')=\codim(X)<\codim(X\cap H_i)=\rank(\A)$, contradicting our assumption that $H_i$ is not a separator.  Thus $\beta$ is non-trivial.  
It follows from Theorem~\ref{thm:SaturationParallelDrawing} and  Corollary~\ref{cor:NTPerspectivitiesAndSaturationLD} that $Q(\A)/\alpha_i\in J^{[k]}_\A\setminus J_\A$.  The statement on projective dimension follows from Corollary~\ref{cor:MaxPDim}.
\end{proof}

It is well-known that the projective dimension of $D(\A)$ is at least the projective dimension of $D(\A_X)$ for any flat $X$ (see~\cite{Yuz91,Kung-Schenck-2006}).  Thus, to conclude that the projective dimension of $D(\A)$ is at least $k-1$, it suffices to assume that there is a flat $X\subset H_i$ of rank at least $k+1$ so that the hyperplane $H_i$ is in rank $k$ general linear position with respect to $\A_X-H_i$.  Applying Corollary~\ref{cor:GeneralPositionPdim} to $\A_X$ yields that the projective dimension of $D(\A_X)$ is at least $k-1$, and thus the projective dimension of $D(\A)$ is at least $k-1$ also.

A hyperplane arrangement $\A\subset\mathbb{P}^\ell$ with at least $\ell+2$ hyperplanes is called generic if every subset of $\ell+1$ hyperplanes of $\A$ intersects in the empty set.  When $\A\subset\mathbb{P}^\ell$ is generic, it follows from Corollary~\ref{cor:GeneralPositionPdim} that the projective dimension of $D(\A)$ is $\ell-1$.  This fact has been known for some time, as it follows from more general results of Rose and Terao~\cite{RoseTeraoGeneric91} and Yuzvinsky~\cite{Yuz91}.  The local version of this -- if $\A_X$ is generic of rank $k+1$ then $D(\A)$ has projective dimension at least $k-1$ -- was observed by Kung and Schenck in~\cite{Kung-Schenck-2006}.  We also recover this using Corollary~\ref{cor:GeneralPositionPdim} and the argument in the previous paragraph.  It is worth noting that having a single hyperplane of $\A_X$ in rank $k$ general linear position is a much weaker hypothesis than assuming $\A_X$ is a generic arrangement for some $X\in L_{k+1}(\A)$.

If $\A\subset\mathbb{P}^\ell$ is generic,
then Corollary~\ref{cor:GeneralPositionPdim} also yields that the saturation of $J_{\A}$ (that is, $J^{[\ell]}_\A$) is $\mathbb{I}_{n-1}(\A)$.  This partially recovers a result of Burity, Simis, and Tohaneanu~\cite{Burity-Simis-Tohaneanu-2021}.  In fact, the latter article inspired us to compare $J_{\A}$ to $\mathbb{I}_{n-1}(\A)$.

We close this section with a proposition inspired by a result of Abe, Dimca, and Sticlaru~\cite[Proposition~4.2]{Abe-Dimca-Sticlaru-2021}.

\begin{prop}\label{prop:AlmostGeneralHyperplane}
Suppose that $\A\subset\bbP^\ell$ is a hyperplane arrangement with $n$ hyperplanes and $H$ is a hyperplane (not in $\A$) which contains a unique flat $X\in L_k(\A)$, where $k\ge 2$.  Let $\A'=\A\cup H$.  Then the space of non-trivial $\wrep{}$s of $\A$ up to rank $k+1$ is isomorphic to the space of non-trivial $\wrep{}$s of $\A'$ up to rank $k+1$.  In particular $(J_{\A}^{[k]}/J_{\A})_{n-1}\cong (J_{\A'}^{[k]}/J_{\A'})_{n}$ as vector spaces.
\end{prop}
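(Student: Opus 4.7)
My approach is to construct explicit mutually inverse linear maps between the spaces of $\wrep{}$s of $\A$ and of $\A'$ up to rank $k+1$ (from a common hyperplane of perspectivity $H_0$ in rank $k+1$ general linear position with respect to $\A'$) that preserve triviality. Once this is established, the ``in particular'' statement follows immediately from Corollary~\ref{cor:NTPerspectivitiesAndSaturationLD}.

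The first and key step is to analyze the flats in $L_k(\A')$. Let $h$ be a linear form defining $H$. The uniqueness of $X$ as the rank $k$ flat of $\A$ contained in $H$ implies, by inductively extending any lower-rank flat in $H$ to a rank $k$ flat in $H$, that every flat of $\A$ of rank less than $k$ contained in $H$ must contain $X$. Consequently $L_k(\A')$ decomposes into three types: (a) rank $k$ flats $Y\neq X$ of $\A$, for which the hyperplanes through $Y$ are unchanged in $\A'$; (b) the flat $X$ itself, now with $n_X + 1$ hyperplanes of $\A'$ through it (the original $n_X$ together with $H$); and (c) new flats $H\cap Y$ for $Y\in L_{k-1}(\A)$ with $X\not\subset Y$. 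A short argument using uniqueness again shows that for type (c), the hyperplanes of $\A'$ through $H\cap Y$ are exactly $H$ and the $n_Y$ hyperplanes defining $Y$: any extra $\alpha_j$ through $H\cap Y$ would force $\alpha_j\cap Y = H\cap Y$ to be a rank $k$ flat of $\A$ contained in $H$, hence equal to $X$, contradicting $X\not\subset Y$. The dimension count $(n_Y+1) - k = n_Y - (k-1)$ then confirms that every relation at a type (c) flat is $\alpha$-only.

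Next I define the extension map $E$: given a $\wrep{}$ $(\lambda_1,\ldots,\lambda_n)$ of $\A$, I write $h = \sum_j c_j \alpha_{i_j}$ as a linear combination of the forms $\alpha_{i_j}$ vanishing on $X$ (possible since $h$ vanishes on $X$) and set $\mu = \sum_j c_j \lambda_{i_j}$, producing the extended tuple $(\lambda_1,\ldots,\lambda_n,\mu)$. Well-definedness of $\mu$ (independence of the choice of $c_j$) follows from the $\wrep{}$ conditions of Proposition~\ref{prop:ParallelEquations} on the $\alpha_{i_j}$ at $X\in L_k(\A)$. I would then verify that the extended tuple is a $\wrep{}$ of $\A'$ by checking the conditions at each flat in $L_k(\A')$: at type (a) and (c) flats the relations are $\alpha$-only and hence automatic, while at the type (b) flat $X$ the single new relation beyond the $\alpha$-only ones, namely $h - \sum_j c_j \alpha_{i_j} = 0$, translates precisely to the defining equation for $\mu$. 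Restriction $R$ (dropping the last coordinate) carries $\wrep{}$s of $\A'$ to $\wrep{}$s of $\A$ by the same case analysis, and $E, R$ are mutually inverse linear maps by uniqueness of $\mu$.

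Finally, triviality is preserved: if $\lambda_i = \alpha_i(\bv)$ defines a trivial $\wrep{}$ of $\A$ via Proposition~\ref{prop:spaceTrivialpardrawings}, then $\mu = \sum_j c_j \alpha_{i_j}(\bv) = h(\bv)$, so $E$ produces the trivial $\wrep{}$ of $\A'$ associated to the same $\bv$, and the converse is immediate under $R$. Therefore $E$ and $R$ descend to inverse isomorphisms between the spaces of non-trivial $\wrep{}$s. The main obstacle I expect is Step 1, specifically the argument for type (c) flats that no extra hyperplane of $\A$ passes through $H\cap Y$; this is where the uniqueness of $X$ as the rank $k$ flat in $H$ enters in an essential way, and it is the combinatorial heart of the proof.
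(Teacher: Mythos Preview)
Your proof is correct and follows essentially the same strategy as the paper's: both construct the extension of a $\wrep{}$ of $\A$ to one of $\A'$ by determining the new coordinate from the relations at the distinguished flat $X$, and then observe that restriction is the inverse. Your version works in the $\lambda$-tuple language of Proposition~\ref{prop:ParallelEquations} and Corollary~\ref{cor:NSpP} and gives a more explicit trichotomy of $L_k(\A')$ (in particular handling the ``type (c)'' flats $H\cap Y$ directly via the dimension count), whereas the paper phrases the lift geometrically in the $\beta$-language, invoking the locally trivial argument from Theorem~\ref{thm:SaturationParallelDrawing} to produce the vector $\bv$ and treating the remaining flats more briefly.
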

\begin{proof}
Without loss of generality, suppose that the hyperplane $H$ is defined by the linear form $\alpha_{n+1}$ in $\A'$.  Suppose $\beta:\{1,\ldots,n\}\to V^*$ is a $\wrep{}$ of $\A$ up to rank $k+1$.  We show that $\beta$ can be lifted to a $\wrep{}$ $\beta':\{1,\ldots,n,n+1\}\to V^*$ of $\A'$ up to rank $k+1$ with $\beta'_{i}=\beta_i$ for $i=1,\ldots,n$.  With this restriction, there is only one way to define $\beta'_{n+1}$, which we now describe.  Suppose that $\alpha_1,\ldots,\alpha_t$ are the linear forms of $\alpha$ vanishing on $X$.  Then $\beta_1,\ldots,\beta_t$ define a flat $Y$ of codimension at most the codimension of $X$ by Lemma~\ref{thm: flat map}.  By Lemma~\ref{lem:lingen2}, $Y\not\subseteq H_0$.  If $\beta'$ is to be a $\wrep{}$ of $\A'$ up to rank $k+1$, then $\beta'_{n+1}$ must vanish on $Y$.  Since $Y\not\subseteq H_0$, $\beta'_{n+1}$ is determined up to constant multiple by $H\cap H_0$ and $Y$.  In fact, using the same arguments as in the proof of Theorem~\ref{thm:SaturationParallelDrawing}, there is a vector $\bv\in \kk^{\ell+1}$ so that $\beta'_i=\alpha_i+\alpha_i(\bv)\alpha_0$ for $i=1,\ldots,t,n+1$.  Thus any relations among $\alpha_1,\ldots,\alpha_t,\alpha_{n+1}$ are also relations among $\beta_1,\ldots,\beta_t,\beta'_{n+1}$.  By our assumption on $H$, the linear form $\alpha_{n+1}$ does not appear in any relations among linear forms which pass through any other flat $X'\in L_k(\A)$.  Thus by Proposition~\ref{prop:ParallelEquations}, $\beta'$ is a $\wrep{}$ of $\A'$ up to rank $k+1$ which lifts the $\wrep{}$ $\beta$ of $\A$ up to rank $k+1$.  Since every $\wrep{}$ of $\A'$ up to rank $k+1$ gives a $\wrep{}$ $\beta$ of $\A$ up to rank $k+1$ by restriction, the space of $\wrep{}$s of $\A$ up to rank $k+1$ is isomorphic to the space of $\wrep{}$s of $\A'$ up to rank $k+1$.  Under this isomorphism, trivial $\wrep{}$s of $\A$ are sent to trivial $\wrep{}$s of $\A'$, so the statement follows.

The final sentence follows from the preceding claims by Corollary~\ref{cor:NTPerspectivitiesAndSaturationLD}.
\end{proof}

\section{Extremal syzygies of line arrangements}\label{sec:ExtremalSyzygies}

In this section we exhibit an isomorphism between the space of $\wrep{}$s of a line arrangement $\A$ and the syzygies of $D_0(\A)$ of maximal degree.  We also identify non-formal arrangements as those whose module of logarithmic derivations has maximal (Castelnuovo-Mumford) regularity.  These are both consequences of a relationship between the minimal free resolution of $J^{\sat}_{\A}/J_{\A}$ and that of $D_0(\A)$, which requires more commutative algebra to develop than we have used previously.  A good reference for this material is Part 3 of~\cite{Eisenbud-1995}.

Every graded module $M$ over a polynomial ring has a graded minimal free resolution $F_{\bullet}:$
\[
0 \to F_r \xrightarrow{\partial_r} F_{r-1} \xrightarrow{\partial_{r-1}} \cdots \xrightarrow{\partial_2} F_1 \xrightarrow{\partial_1} F_0
\]
where \[F_i =  \underset{j\ge 0}{\oplus} S(-j)^{b_{i,j}},\] $\mbox{coker}(\partial_1)=M$, and all of the nonzero entries in each $\partial_i$ have degree at least one.  The number $b_{i,j}$ is the dimension of the space of minimal $i$th syzygies of $M$ of degree $j$ -- these numbers are called \textit{graded Betti numbers} of $M$.  We write $b_{i,j}(M)$ if we wish to specify the module $M$. It is standard to store the graded Betti numbers in a \textit{Betti table}
whose entry in the $j$th row and $i$th column is $b_{i,i+j}$: 

\[
\renewcommand{\arraystretch}{1}
\begin{array}{c|ccccc}
& 0 & 1 & \cdots & r-1 & r\\
\hline
0 & b_{0,0} & b_{1,1} & \cdots & b_{r-1,r-1} & b_{r,r}\\
1 & b_{0,1} & b_{1,2} & \cdots & b_{r-1,r} & b_{r, r+1}\\
\vdots & \vdots & \vdots & \ddots & \vdots & \vdots \\
m & b_{0,m} & b_{1, m+1}& \cdots & b_{r-1, r-1+m} & b_{r, r+m}
\end{array}
\]

The (Castelnuovo-Mumford) regularity of a graded $S$-module $M$ can be defined in terms of the graded Betti numbers of $M$ by
\[
\reg(M):=\max\{j-i:b_{ij}(M)\neq 0\}.
\]
In terms of the Betti table of $M$, $\reg(M)$ is the index of the last non-zero row.

\begin{exm}
Suppose that $\A$ is an arrangement of nine lines obtained by extending the edges of the realization of $K_{3,3}$ in Figure~\ref{fig:k33generic}. If the 6 vertices do not lie on a conic, then we see from the table that $D_0(\A)$ is minimally generated by 6 elements of degree 6 and that its first syzygy module is generated by 4 elements of degree 7.  The resolution has length one, and the nonzero entries of $\partial_1$ are all linear forms. The regularity of $D_0(\A)$ is 6.

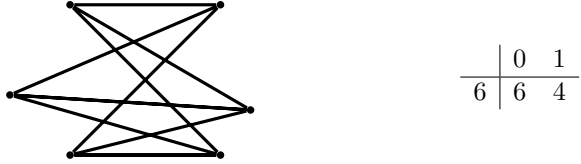
\begin{figure}[ht]
\begin{minipage}[t]{.4\textwidth}
\centering
\begin{tikzpicture}[scale=2]
\tikzstyle{dot}=[circle,fill=black,inner sep=1 pt];

\node[dot] (1) at (0,0){};
\node[dot] (2) at (-.4,-.6){};
\node[dot] (3) at (0,-1){};
\node[dot] (4) at (1,0){};
\node[dot] (5) at (1.2,-.7){};
\node[dot] (6) at (1,-1){};

\draw[very thick](1)--(4)--(2)--(5)--(1)
(1)--(6)--(3)--(4)
(2)--(5)--(3)--(6)--(2);
\end{tikzpicture}
\end{minipage}
\begin{minipage}[t]{.4\textwidth}
\centering
\raisebox{1 cm}{
$
\begin{array}{c|cc}
& 0 & 1\\
\hline
6 & 6 & 4
\end{array}
$
}
\end{minipage}
\caption{A generic realization of $K_{3,3}$ determines an arrangement $\A$; the Betti table of $D_0(\A)$ is displayed.}\label{fig:k33generic}
\end{figure}
\end{exm}

Duality properties of quotients of the form $J^{\sat}_{\A}/J_{\A}$ have been increasingly studied in the last decade -- see for example~\cite{Chardin-2004,Hassanzadeh-Simis-2012,Sernesi-2014,Straten-Warmt-2015,Failla-Flores-Peterson-2021}.  Proposition~1.3 in \cite{Hassanzadeh-Simis-2012} establishes a duality for resolutions of graded three-generated ideals of codimension two in three variables. 
For a line arrangement $\A$ with $n$ lines in $\mathbb{P}^2$, the Jacobian $J_\A$ is generated by three polynomials and this duality result shows that the resolution of $J_{\A}^{\sat}/J_{\A}$ is self-dual up to a shift (with the shifts indicated below)
\begin{equation}\label{eq:DualResolution}
 0 \rightarrow F_0^*(-3n+3) \rightarrow F_1^*(-3n+3) \rightarrow F_1 \rightarrow F_0,
\end{equation}
where $S/J_{\A} = \kk[x,y,z]/J_{\A}$ has minimal free resolution
\begin{equation}\label{eq:SJresolution}
   0 \rightarrow F_0^*(-3n+3) \rightarrow F_1^*(-3n+3) \rightarrow S^3(-n+1) \rightarrow S.
\end{equation}
The leftmost maps in Equations~\eqref{eq:DualResolution} and~\eqref{eq:SJresolution} are the same. Since $D_0(\A)$ is defined as the first syzygy module of $J_{\A}(n-1)$, we can shift the resolution of $S/J_{\A}$ in Equation~\ref{eq:SJresolution} by $n-1$ to obtain the diagram of exact sequences below:
\begin{equation}\label{eq:resolution}
\begin{tikzcd}[column sep = .5 cm, row sep= .2 cm]
0 \ar[r] & F_0^*(-2n+2) \ar[r] & F_1^*(-2n+2)\ar[rr] \ar[dr, start anchor= south east]& & S^3\ar[r] & S(n-1).\\
 & & & D_0(\A) \ar[ur]\ar[dr] &\\
 & & \phantom{HHHH} 0 \ar[ur, start anchor={[yshift=-3 pt]north east}] & & 0\\
\end{tikzcd}
\end{equation}
We can see a minimal free resolution of $D_0(\A)$ on the lefthand side of Equation~\ref{eq:resolution}. In particular, we see that the 
 generators of $D_0(\A)$ can be identified with generators of the dual of the first syzygy module of $J_{\A}^{\sat}/J_{\A}$ after a shift in degrees by $2n-2$.  Hence,
\begin{equation}\label{eq:Betti0}
b_{0,j}(D_0(\A))=b_{1,2n-2-j}(J_{\A}^{\sat}/J_{\A}),
\end{equation}
and similarly
\begin{equation}\label{eq:Betti1}
b_{1,j}(D_0(\A))=b_{0,2n-2-j}(J_{\A}^{\sat}/J_{\A}).
\end{equation}

\begin{cor}\label{cor:DerivationSyzygies}
Let $\A$ be an arrangement with $n$ lines.  Then the dimension of the vector space of minimal syzygies of degree $n-1$ of $D_0(\A)$ (that is, $b_{1, n-1}(D_0(\A))$) is equal to the dimension of the space of non-trivial $\wrep{}$s of $\matroid(\alpha)$.
\end{cor}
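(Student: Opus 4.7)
The plan is to chain together the three main results developed earlier in the paper: the identification of non-trivial weak perspective representations with a graded piece of a quotient of saturations of the Jacobian ideal (Corollary \ref{cor:NTPerspectivitiesAndSaturationLD}), the self-duality of the resolution of $J_{\A}^{\sat}/J_{\A}$ recorded in Equation \eqref{eq:Betti1}, and a degree argument that turns a vector space dimension into a minimal Betti number.

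First I would specialize Corollary \ref{cor:NTPerspectivitiesAndSaturationLD} to a line arrangement $\A \subset \bbP^2$. Since $\ell = 2$, choosing $k = 2$ gives $J_{\A}^{[2]} = J_{\A}^{\sat}$, and $k+1 = 3 \geq \rank(\A)$, so weak perspective representations of $\matroid(\alpha)$ up to rank $3$ are just weak perspective representations of $\matroid(\alpha)$. Consequently,
\[
(J_{\A}^{\sat}/J_{\A})_{n-1} \;\cong\; \{\text{non-trivial $\wrep{}$s of }\matroid(\alpha)\}
\]
as vector spaces.

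Next I would promote this equality of dimensions to an equality with a Betti number. Because $J_{\A}^{\sat} \subseteq \mathbb{I}_{n-1}(\A)$ (as noted at the start of Section \ref{sec:SaturatingJacobian}) and $\mathbb{I}_{n-1}(\A)$ is generated in degree $n-1$, every non-zero homogeneous element of $J_{\A}^{\sat}$ has degree at least $n-1$. The same is true of $J_{\A}$, since its generators are the partial derivatives of $Q$ of degree $n-1$. Therefore $(J_{\A}^{\sat}/J_{\A})_{j} = 0$ for $j < n-1$, which forces every element of $(J_{\A}^{\sat}/J_{\A})_{n-1}$ to be a minimal generator: a multiple of a lower-degree generator by a positive-degree polynomial would land in degree $\geq n$. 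Thus
\[
b_{0,n-1}(J_{\A}^{\sat}/J_{\A}) \;=\; \dim_{\kk}\,(J_{\A}^{\sat}/J_{\A})_{n-1}.
\]

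Finally, I would invoke the duality \eqref{eq:Betti1} derived from the Hassanzadeh--Simis self-dual resolution \eqref{eq:DualResolution} with $j = n-1$:
\[
b_{1,n-1}(D_0(\A)) \;=\; b_{0,\,2n-2-(n-1)}(J_{\A}^{\sat}/J_{\A}) \;=\; b_{0,n-1}(J_{\A}^{\sat}/J_{\A}),
\]
and then combine the three displayed equalities to conclude.

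The bulk of the heavy lifting -- the duality of resolutions and the passage from weak perspective representations to the Jacobian saturation -- has already been done, so there is no substantial obstacle left. The only point requiring care, and the one I would scrutinize, is the claim that $(J_{\A}^{\sat}/J_{\A})_{n-1}$ consists entirely of minimal generators. This is where the containment $J_{\A}^{\sat} \subseteq \mathbb{I}_{n-1}(\A)$ is crucial; without it, one could conceivably have non-minimal elements coming from generators in degrees below $n-1$. The argument above rules this out cleanly in the $\bbP^2$ setting, but it is the one spot where the line-arrangement hypothesis is genuinely used beyond the duality.
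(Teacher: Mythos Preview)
Your proof is correct and follows essentially the same three-step approach as the paper: invoke Corollary~\ref{cor:NTPerspectivitiesAndSaturationLD}, argue that $(J_{\A}^{\sat}/J_{\A})_{n-1}$ consists entirely of minimal generators so that its dimension equals $b_{0,n-1}(J_{\A}^{\sat}/J_{\A})$, and then apply the duality~\eqref{eq:Betti1}. Your justification of the second step via the containment $J_{\A}^{\sat}\subseteq\mathbb{I}_{n-1}(\A)$ is exactly the reasoning the paper relies on (though the paper states it more tersely).
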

\begin{proof}
By Corollary~\ref{cor:NTPerspectivitiesAndSaturationLD}, the dimension of the space of non-trivial $\wrep{}$s of $\matroid(\alpha)$ is isomorphic to $(J^{\sat}_{\A}/J_{\A})_{n-1}$.  Since a basis for $(J^{\sat}_{\A}/J_{\A})_{n-1}$ consists of all minimal generators of $(J^{\sat}_{\A}/J_{\A})$ in degree $n-1$, $\dim(J^{\sat}_{\A}/J_{\A})_{n-1}=b_{0,n-1}(J^{\sat}_{\A}/J_{\A})$.  Now the result follows from Equation~\eqref{eq:Betti1}.
\end{proof}

Together, Corollary~\ref{cor:DerivationSyzygies} and Proposition~\ref{prop:correspondences} allow us to easily extract examples from the theory of rigid frameworks which generalize the phenomenon exhibited by Ziegler's pair from Section~\ref{sec:Intro}.

\begin{exm}[Examples from rigidity]\label{exm:ExtendingZiegPair}
  The first two examples can be found in~\cite[Table~1]{WhiteWhiteleyAlgGeoStresses} and the last in~\cite[Example~2]{NixonSchulzeWhiteley21}(see also~\cite[p. 88]{Whiteley78Quads}). All of the arrangements have the generic matroid for their graph (as defined in Section~\ref{sec:RigidityPlanarFrameworks}). In Figures~\ref{fig: pinchedRingSpecial} and \ref{fig: cubeWithDiagonal}, the framework is infinitesimally rigid except when the condition depicted is met, in which case the expected dimension of the space of nontrivial infinitesimal motions is one.

Figure~\ref{fig: ringOfQuads} depicts a framework that has a one-dimensional space of nontrivial infinitesimal motions for almost all embeddings.  It is easy to visually verify that the bars $\overline{a_ia_{i+1}}$ and $\overline{b_ib_{i+1}}$ are parallel and that each of these pairs has a different slope.  Since parallel lines meet on the line at infinity, the 4 intersection points are collinear, which is the condition that forces the dimension of the space of nontrivial infinitesimal motions of the framework to be two instead of one~\cite[Example~2]{NixonSchulzeWhiteley21}.  Since lines in the arrangement meet on the line at infinity, we take $H_0$ to be a different generic line.  In~\cite{NixonSchulzeWhiteley21} the authors show the same behavior carries over to any so-called \textit{ring of quadrilaterals} consisting of $n$ quadrilaterals surrounding a central $n$-gon.

We also include \textit{Betti tables} of a generic $D_0(\A_{G(\bp)})$ as well as the arrangement depicted.  In the Betti table, the number of generators of $D_0(\A_{G(\bp)})$ of degree $i$ are recorded in row $i$ and column $0$, while the number of syzygies of degree $i+1$ among the generators are recorded in row $i$ and column $1$ (add the row and column indices to get the degree of the syzygy).

\begin{figure}
\begin{floatrow}
\ffigbox[.5\textwidth]{%
 \includegraphics[width = 7cm]{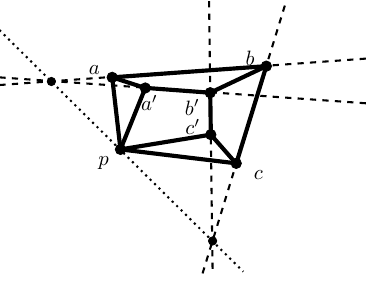}
}{%
   \caption{The points $\overline{ab} \cap \overline{a'b'}, \overline{bc} \cap \overline{b'c'},$ and $p$ are collinear.}
   \label{fig: pinchedRingSpecial}
    }
\capbtabbox[.5\textwidth]{%
\raisebox{1.4 cm}{
\begin{tabular}{c|cc}
& 0 & 1\\
\hline
7 & 1 & 0\\
8 & 6 & 5
\end{tabular}
}
\hspace{1cm}
\raisebox{1.2 cm}{
\begin{tabular}{c|cc}
& 0 & 1\\
\hline
7 & 2 & 0\\
8 & 3 & 2\\
9 & 0 & 1
\end{tabular}
}
}{%
  \caption{The generic Betti table and a special one.}%
}
\end{floatrow}
\end{figure}

\begin{figure}
\begin{floatrow}
\ffigbox{%
 \includegraphics[width = 7cm]{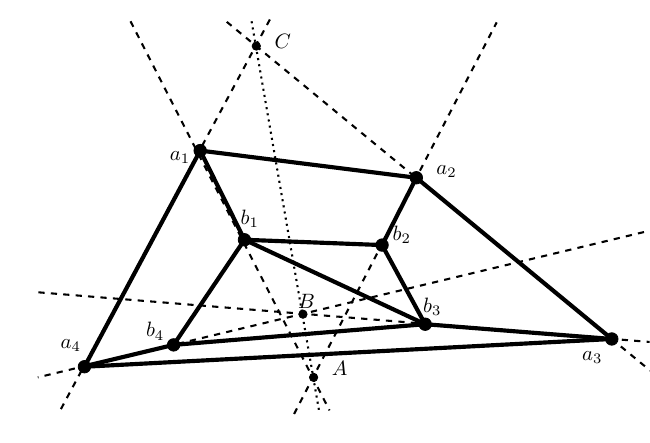}
}{%
  \caption{The points $A = \overline{a_1b_1} \cap \overline{a_2b_2}, B = \overline{a_3b_3} \cap \overline{a_4b_4},$ and $C = \overline{a_1a_4} \cap \overline{a_2a_3}$ are collinear.}
    \label{fig: cubeWithDiagonal}%
}

\capbtabbox[.5\textwidth]{%
\raisebox{1.4 cm}{
\begin{tabular}{c|cc}
     & 0 & 1\\
\hline
9 & 2 & 0\\
10 & 6 & 6
  \end{tabular}
}
\hspace{1cm}
\raisebox{1.2 cm}{
\begin{tabular}{c|cc}
& 0 & 1\\
\hline
9 & 3 & 0\\
10 & 3 & 1\\
11 & 0 & 1
\end{tabular}
}
}{%
  \caption{The generic Betti table and a special one.}%
}
\end{floatrow}
\end{figure}

 \begin{figure}
\begin{floatrow}
\ffigbox[.5\textwidth]{%
\includegraphics[width=6cm]{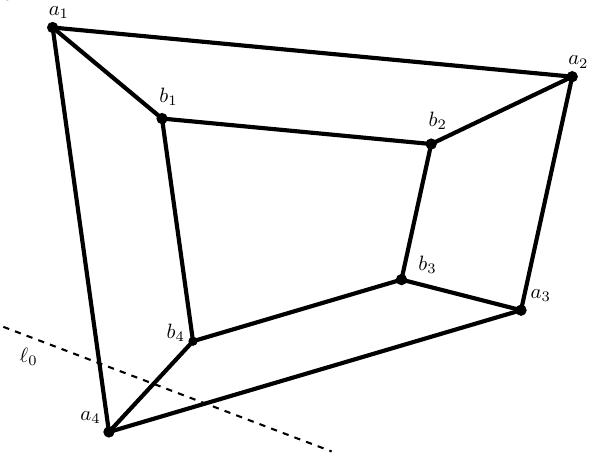}
}{
    \caption{The points $\overline{a_ia_{i+1}} \cap \overline{b_ib_{i+1}}$ lie on the line at infinity.}
     \label{fig: ringOfQuads}
}
\capbtabbox[.5\textwidth]{%
\raisebox{1.4 cm}{
\begin{tabular}{c|cc}
& 0 & 1\\
\hline
9 & 8 & 5\\
10 & 0 & 1
\end{tabular}
}
\hspace{1cm}
\raisebox{1.2 cm}{
\begin{tabular}{c|cc}
& 0 & 1\\
\hline
8 & 1 & 0\\
9 & 5 & 2\\
10 & 0 & 2
\end{tabular}
}
}{%
  \caption{The generic Betti table and a special one.}%
}
\end{floatrow}
\end{figure}
\end{exm}

Using inductive constructions from \cite{WhiteWhiteleyAlgGeoStresses}, we can glue generically minimally rigid graphs together and then place the vertices in special positions to make $b_{1,n-1}$ arbitrarily high.  For example, suppose that $G_1$ and $G_2$ are generically minimally rigid graphs.  If $uv$ is an edge in $G_1$ and $u'v'$ is an edge in $G_2$, we can create a new graph $G$ by identifying $u$ with $u'$, $v$ with $v'$ and the edge $uv$ with the edge $u'v'.$  

\begin{exm}\label{ex: glueing K33}
Form a graph $G$ by gluing together $m$ copies of $K_{3,3}$ using the aforementioned edge-gluing procedure.  For each copy of $K_{3,3}$, choose a placement map $\bp$ so that the corresponding six vertices of $G(\bp)$ lie on a conic (the conic for each copy of $K_{3,3}$ can either be the same or distinct from the conic for any other $K_{3,3}$).  Furthermore, choose $\bp$ so that $\A_{G(\bp)}$ has the generic matroid $\matroid(G)$.  Then $\A_{G(\bp)}$ will have an $m$-dimensional space of non-trivial $\wrep{}$s.  We can find an explicit basis for the space of nontrivial $\wrep{}$s with one basis element for each $K_{3,3}$ as follows:
select one $K_{3,3}$ and fix a nontrivial $\wrep{}$ of the corresponding arrangement.  This will determine a trivial $\wrep{}$ of the adjacent copies of $K_{3,3}$ via translation that propagates through all of $\A_{G(\bp)}$. Only the selected $K_{3,3}$ has been represented nontrivially.  We get one such nontrivial $\wrep{}$ for each copy of $K_{3,3}.$  By moving any of the six vertices of one of these $K_{3,3}$'s off a conic, we lose a basis element, forcing the space of non-trivial $\wrep{}$s to drop by one.

By Corollary~\ref{cor:DerivationSyzygies}, this procedure gives rise to a sequence $\A_m,\ldots,\A_1,\A_0$ of line arrangements with the same intersection lattice for which $b_{1,n-1}(D(\A_i))=i$ for $0\le i\le m$.
\end{exm}

The following corollary should be compared to~\cite[Proposition~4.14]{Abe-Dimca-Sticlaru-2021}, where Abe, Dimca, and Sticlaru study the effect on the initial degree of $D_0(\A)$ (that is, the smallest integer $r$ for which $b_{0,r}(D_0(\A))\neq 0$) when a general line is added through an intersection point of a line arrangement.  We study instead the largest degree for which $D_0(\A)$ has a syzygy.


Indeed, the following corollary shows that if $\A,\A'$ is a pair of arrangements that exhibits the behavior of Ziegler's pair, then adding a general line through an intersection point of $\A$ and a general line through an intersection point of $\A'$ will yield a pair $\mathcal{B},\mathcal{B}'$ with the same behavior (in terms of the maximum degree of a syzygy).

\begin{cor}\label{cor:adding general lines}
Suppose $\A$ and $\A'$ are line arrangements of $n$ lines with the same intersection lattice so that $b_{1,n-1}(\A)\neq b_{1,n-1}(\A')$.  Let $\mathcal{B}$ (respectively $\mathcal{B}'$) be obtained from $\A$ (respectively $\A'$) by adding a line $H$ ($H'$) which passes through a unique intersection point of $\A$ ($\A'$).  Then $b_{1,n}(\mathcal{B})=b_{1,n-1}(\A)\neq b_{1,n-1}(\A')=b_{1,n}(\mathcal{B}')$.
\end{cor}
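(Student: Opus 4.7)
The plan is to recognize this corollary as an almost immediate combination of two tools already in hand: Proposition~\ref{prop:AlmostGeneralHyperplane}, which says that adding a hyperplane through a unique flat does not change the space of non-trivial $\wrep{}$s up to the appropriate rank, and Corollary~\ref{cor:DerivationSyzygies}, which identifies that space with the relevant top-degree graded Betti number of $D_0$. The task is simply to check that the hypotheses line up correctly, apply both results to each of $(\A,\mathcal{B})$ and $(\A',\mathcal{B}')$, and then invoke the hypothesized inequality.

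First I would unpack the setting: since $\A$ is a line arrangement in $\bbP^2$, $\rank(\A)\le 3$, so $L_2(\A)$ consists precisely of the intersection points of $\A$ and the relevant saturation is $J_{\A}^{[2]}=J_{\A}^{\sat}$. The assumption that the added line $H$ passes through a unique intersection point of $\A$ is exactly the statement that $H$ contains a unique flat of $L_2(\A)$, so Proposition~\ref{prop:AlmostGeneralHyperplane} applies with $k=2$. It yields an isomorphism between the space of non-trivial $\wrep{}$s of $\A$ up to rank $3$ and the space of non-trivial $\wrep{}$s of $\mathcal{B}$ up to rank $3$; equivalently, an isomorphism $(J_{\A}^{\sat}/J_{\A})_{n-1}\cong (J_{\mathcal{B}}^{\sat}/J_{\mathcal{B}})_{n}$. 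The identical argument applies to the pair $(\A',\mathcal{B}')$.

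Next I would apply Corollary~\ref{cor:DerivationSyzygies} to both arrangements $\A$ (with $n$ lines) and $\mathcal{B}$ (with $n+1$ lines). This identifies $b_{1,n-1}(D_0(\A))$ with the dimension of the space of non-trivial $\wrep{}$s of $\A$, and $b_{1,n}(D_0(\mathcal{B}))$ with the dimension of the space of non-trivial $\wrep{}$s of $\mathcal{B}$. Combined with the isomorphism from the previous step, this gives $b_{1,n}(\mathcal{B}) = b_{1,n-1}(\A)$ and likewise $b_{1,n}(\mathcal{B}') = b_{1,n-1}(\A')$. The claimed inequality $b_{1,n}(\mathcal{B}) \neq b_{1,n}(\mathcal{B}')$ is then immediate from the hypothesis $b_{1,n-1}(\A) \neq b_{1,n-1}(\A')$.

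There is no real obstacle here, since the difficult work is all done in Proposition~\ref{prop:AlmostGeneralHyperplane} and Corollary~\ref{cor:DerivationSyzygies}; the only thing worth being explicit about is that both $\A$ and $\A'$ have the same combinatorics, so the appropriate choice of line $H$ (resp.\ $H'$) through a unique intersection point can be made for each of them, and that $\mathcal{B}$ and $\mathcal{B}'$ still have the same intersection lattice (if desired for the conclusion). The mild nuisance is just bookkeeping with the shift in the number of lines: the degree of the Betti number of interest shifts from $n-1$ (for $n$ lines) to $n$ (for $n+1$ lines), matching the shift from $(J^{\sat}/J)_{n-1}$ to $(J^{\sat}/J)_{n}$ in Proposition~\ref{prop:AlmostGeneralHyperplane}.
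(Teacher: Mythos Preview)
Your proposal is correct and matches the paper's own proof, which simply cites Corollary~\ref{cor:DerivationSyzygies}, Corollary~\ref{cor:NTPerspectivitiesAndSaturationLD}, and Proposition~\ref{prop:AlmostGeneralHyperplane} as making the result immediate. Your write-up just unpacks these citations with a bit more care about the $k=2$ case and the degree shift, which is perfectly appropriate.
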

\begin{proof}
This is immediate from Corollary~\ref{cor:DerivationSyzygies}, Corollary~\ref{cor:NTPerspectivitiesAndSaturationLD}, and Proposition~\ref{prop:AlmostGeneralHyperplane}.
\end{proof}

\begin{exm}\label{ex:addinglinetoziegler}
Consider starting with Zeigler's pair $\A$, $\A'$ where the six triple points of $\A$ are on a conic and the six triple points of $\A'$ are not on a conic.  Betti tables for $D_0(\A)$ and $D_0(\A')$ are shown in Figure~\ref{fig:addinglinetoziegler}.  We chose the particular equations $\Q(\A)=xyz(x+y+z)(2x+y+z)(2x+3y+z)(2x+3y+4z)(x+ 3z)(x+ 2y+ 3z)$ and $\Q(\A')=xyz(x+y+z)(2x+y+z)(2x+3y+z)(2x+3y+4z)(3x+5z)(3x+4y+5z)$ to compute the betti tables (the figures do not display these exact line arrangements, but just illustrate the relevant geometry).  The betti numbers $b_{1,8}(D_0(\A))=1$ and $b_{1,8}(D_0(\A'))=0$ are explained by Corollary~\ref{cor:DerivationSyzygies}.

Now add a general line through the lower left triple point of $\A$ and $\A'$ in the top of Figure~\ref{fig:addinglinetoziegler} to get the line arrangements $\B$ and $\B'$ in the bottom of Figure~\ref{fig:addinglinetoziegler}.  To compute the betti tables, we chose to add the line with equation $5x+7y+7z$ through the intersection point $[0:-1:1]$ in both $\A$ and $\A'$.  According to~\cite[Prop~4.14]{Abe-Dimca-Sticlaru-2021} (see~\cite[Example~4.16]{Abe-Dimca-Sticlaru-2021}), $b_{0,6}(D_0(\B))\neq 0$ and $b_{0,6}(D_0(\B'))\neq 0$.  According to our Corollary~\ref{cor:adding general lines}, $b_{1,9}(D_0(\A))=1$ and $b_{1,9}(D_0(\A'))=0$, as illustrated.
\end{exm}

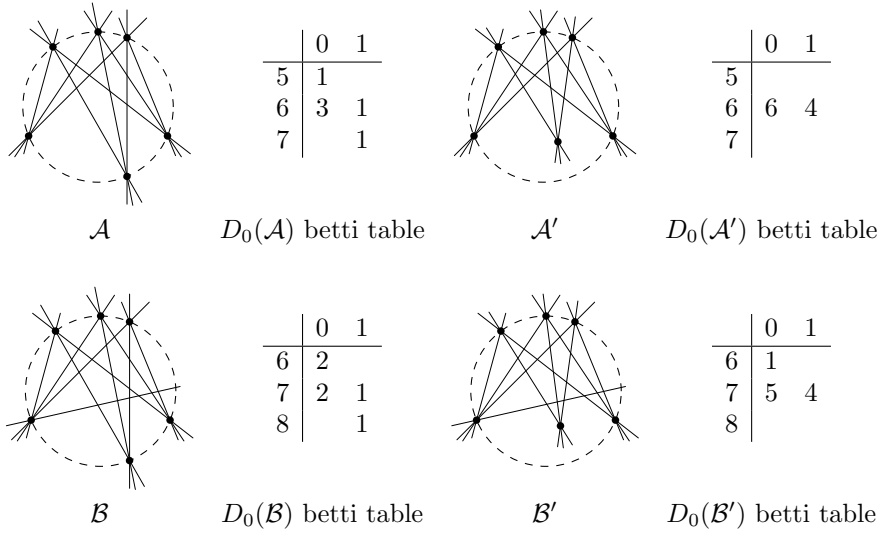
\begin{figure}

\begin{tabular}{cccc}
\begin{tikzpicture}[scale=1]
\tikzstyle{dot}=[circle,fill=black,inner sep=1 pt];

\node[dot] (0) at (-3/5,4/5){};
\node[dot] (1) at (0,1){};
\node[dot] (2) at (5/13,12/13){};
\node[dot] (3) at (12/13,-5/13){};
\node[dot] (4) at (5/13,-12/13){};
\node[dot] (5) at (-12/13,-5/13){};
				
\node at (0,-1.2){};
				
\tkzDefPoints{-.6/.8/P1, 0/1/P2, .385/.923/P3, .923/-.385/P4, .385/-.923/P5, -.923/-.385/P6}
\tkzDrawLine(P1,P4);
\tkzDrawLine(P1,P5);
\tkzDrawLine(P1,P6);
\tkzDrawLine(P2,P4);
\tkzDrawLine(P2,P5);
\tkzDrawLine(P2,P6);
\tkzDrawLine(P3,P4);
\tkzDrawLine(P3,P5);
\tkzDrawLine(P3,P6);
				
				%
				
				\draw[dashed] (0,0) circle (1);
				
			\end{tikzpicture}
			&
\raisebox{40 pt}{
\begin{tabular}{c|cc}
& 0 & 1\\
\hline
5 & 1 & \\
6 & 3 & 1\\
7 &  & 1
\end{tabular}
}
&

			\begin{tikzpicture}[scale=1]
				\tikzstyle{dot}=[circle,fill=black,inner sep=1 pt];
				
				\node[dot] (0) at (-3/5,4/5){};
				\node[dot] (1) at (0,1){};
				\node[dot] (2) at (5/13,12/13){};
				\node[dot] (3) at (12/13,-5/13){};
				\node[dot] (4) at (5/26,-12/26){};
				\node[dot] (5) at (-12/13,-5/13){};
				
				\node at (0,-1.2){};
				
				\tkzDefPoints{-.6/.8/P1, 0/1/P2, .385/.923/P3, .923/-.385/P4, .192/-.462/P5, -.923/-.385/P6}
				\tkzDrawLine(P1,P4);
				\tkzDrawLine(P1,P5);
				\tkzDrawLine(P1,P6);
				\tkzDrawLine(P2,P4);
				\tkzDrawLine(P2,P5);
				\tkzDrawLine(P2,P6);
				\tkzDrawLine(P3,P4);
				\tkzDrawLine(P3,P5);
				\tkzDrawLine(P3,P6);
				
				
				\draw[dashed] (0,0) circle (1);
			\end{tikzpicture}

&
\raisebox{40 pt}{
\begin{tabular}{c|cc}
& 0 & 1\\
\hline
5 &  & \\
6 & 6 & 4\\
7 &  & 
\end{tabular}
}
\\
$\A$ & $D_0(\A)$ betti table & $\A'$ & $D_0(\A')$ betti table\\[15 pt]

\begin{tikzpicture}[scale=1]
\tikzstyle{dot}=[circle,fill=black,inner sep=1 pt];

\node[dot] (0) at (-3/5,4/5){};
\node[dot] (1) at (0,1){};
\node[dot] (2) at (5/13,12/13){};
\node[dot] (3) at (12/13,-5/13){};
\node[dot] (4) at (5/13,-12/13){};
\node[dot] (5) at (-12/13,-5/13){};
				
\node at (0,-1.2){};
				
\tkzDefPoints{-.6/.8/P1, 0/1/P2, .385/.923/P3, .923/-.385/P4, .385/-.923/P5, -.923/-.385/P6,.727/-.012/P7}
\tkzDrawLine(P1,P4);
\tkzDrawLine(P1,P5);
\tkzDrawLine(P1,P6);
\tkzDrawLine(P2,P4);
\tkzDrawLine(P2,P5);
\tkzDrawLine(P2,P6);
\tkzDrawLine(P3,P4);
\tkzDrawLine(P3,P5);
\tkzDrawLine(P3,P6);
\tkzDrawLine(P6,P7);
				
				%
				
				\draw[dashed] (0,0) circle (1);
				
			\end{tikzpicture}

&
\raisebox{40 pt}{
\begin{tabular}{c|cc}
& 0 & 1\\
\hline
6 & 2 & \\
7 & 2 & 1\\
8 &  & 1
\end{tabular}
}
   
			&
			\begin{tikzpicture}[scale=1]
				\tikzstyle{dot}=[circle,fill=black,inner sep=1 pt];
				
				\node[dot] (0) at (-3/5,4/5){};
				\node[dot] (1) at (0,1){};
				\node[dot] (2) at (5/13,12/13){};
				\node[dot] (3) at (12/13,-5/13){};
				\node[dot] (4) at (5/26,-12/26){};
				\node[dot] (5) at (-12/13,-5/13){};
				
				\node at (0,-1.2){};
				
				\tkzDefPoints{-.6/.8/P1, 0/1/P2, .385/.923/P3, .923/-.385/P4, .192/-.462/P5, -.923/-.385/P6,.727/-.012/P7}
				\tkzDrawLine(P1,P4);
				\tkzDrawLine(P1,P5);
				\tkzDrawLine(P1,P6);
				\tkzDrawLine(P2,P4);
				\tkzDrawLine(P2,P5);
				\tkzDrawLine(P2,P6);
				\tkzDrawLine(P3,P4);
				\tkzDrawLine(P3,P5);
				\tkzDrawLine(P3,P6);
                \tkzDrawLine(P6,P7);
				
				
				\draw[dashed] (0,0) circle (1);
			\end{tikzpicture}

   &
\raisebox{40 pt}{
   \begin{tabular}{c|cc}
& 0 & 1\\
\hline
6 & 1 & \\
7 & 5 & 4\\
8 &  & 
\end{tabular}
}
\\
$\B$ & $D_0(\B)$ betti table & $\B'$ & $D_0(\B')$ betti table\\
\end{tabular}

\caption{Example~\ref{ex:addinglinetoziegler}: Illustrating Corollary~\ref{cor:adding general lines}}\label{fig:addinglinetoziegler}
\end{figure}

We next describe formal line arrangements in terms of the regularity of the module of logarithmic derivations. We first give a short proof of a result due to  Schenck~\cite[Corollary~3.5]{SchenckElementaryModifications03} using the duality of the resolution of $J^{\sat}_\A/J_{\A}$.

\begin{cor}\label{cor:Regularity}
If $\A$ is an essential arrangement with $n$ lines, then the maximal degree of a syzygy of $D_0(\A)$ is $n-1$, the maximal degree of a generator of $D_0(\A)$ is $n-2$, and thus $\reg(D_0(\A))\le n-2$.
\end{cor}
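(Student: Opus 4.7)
The plan is to derive both degree bounds for $D_0(\A)$ directly from the diagram in Equation~\eqref{eq:resolution}, combined with a degree lower bound on the resolution of $J^{\sat}_\A/J_\A$.  The key observation I would make first is that $J^{\sat}_\A/J_\A$ cannot have minimal generators in low degrees.  Indeed, from Section~\ref{sec:SaturatingJacobian} we have $J^{\sat}_\A = J^{[\ell]}_\A \subseteq \mathbb{I}_{n-1}(\A)$, and the latter ideal is generated in degree $n-1$.  Hence $J^{\sat}_\A$ vanishes below degree $n-1$, which forces $J^{\sat}_\A/J_\A$ to be generated in degrees $\geq n-1$.

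With this established, I would then read off the degrees from the minimal resolution in Equation~\eqref{eq:DualResolution}: $F_0 = \bigoplus_j S(-b_j)$ with $b_j \geq n-1$, and by minimality the first syzygies sit in strictly higher degrees, so $F_1 = \bigoplus_i S(-a_i)$ with $a_i \geq n$.  Feeding these bounds into Equation~\eqref{eq:resolution} gives the minimal resolution
\[
0 \to F_0^*(-2n+2) \to F_1^*(-2n+2) \to D_0(\A) \to 0.
\]
The module $F_1^*(-2n+2) = \bigoplus_i S(a_i - 2n+2)$ has generators in degrees $2n-2-a_i \leq n-2$, and $F_0^*(-2n+2) = \bigoplus_j S(b_j - 2n+2)$ has generators in degrees $2n-2-b_j \leq n-1$.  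This immediately gives the bounds on the maximum degree of a generator and syzygy of $D_0(\A)$.

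Finally, since $D_0(\A)$ has projective dimension at most $1$, its Betti table has only two columns, and
\[
\reg(D_0(\A)) = \max_{i,j}\{j-i : b_{i,j}(D_0(\A)) \neq 0\} \leq \max\{n-2,(n-1)-1\} = n-2.
\]

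The main subtlety I anticipate is ensuring that the displayed resolution of $D_0(\A)$ is genuinely minimal, so that the degree bounds transfer to graded Betti numbers rather than merely giving a (possibly slack) presentation.  This should follow from the minimality of the resolution of $S/J_\A$ used to build Equation~\eqref{eq:resolution}, since the duality preserves minimality: an entry of $F_1 \to F_0$ lies in the maximal ideal precisely when the transposed entry of $F_0^*(-3n+3) \to F_1^*(-3n+3)$ does.  Apart from this bookkeeping step, the argument is purely a matter of shifting degrees under the Hassanzadeh--Simis duality.
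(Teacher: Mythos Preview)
Your proof is correct and follows essentially the same approach as the paper: both arguments use the containment $J^{\sat}_\A \subseteq \mathbb{I}_{n-1}(\A)$ to bound the degrees of generators (and hence syzygies) of $J^{\sat}_\A/J_\A$ from below, and then transport these bounds to $D_0(\A)$ via the duality. The paper phrases the last step through the Betti-number identities~\eqref{eq:Betti0} and~\eqref{eq:Betti1}, whereas you unpack the same identities by writing out the twisted duals $F_i^*(-2n+2)$ explicitly; your closing remark on minimality makes precise a point the paper leaves implicit.
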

\begin{proof}
If $D_0(\A)$ is free, then $D(\A)$ is free.  In this case it is well-known that the degrees of the generators of $D(\A)$ (called the \textit{exponents} of $\A$) must add to $n$.  Since we split off the direct summand of $D(\A)$ generated by the Euler derivation (which has degree $1$) to form $D_0(\A)$, the two generators of $D_0(\A)$ must sum to $n-1$.  Since $\A$ is essential, $D_0(\A)$ does not have a generator of degree $0$.  It follows that both generators of $D_0(\A)$ have degree at most $n-2$.

Now assume that $D_0(\A)$ is not free, so $J_{\A}^{\sat}\neq J_{\A}$.
We have seen that $J_{\A}^{\sat}\subset \mathbb{I}_{n-1}(\A)$.  It follows that $J_{\A}^{\sat}$, and hence $J_{\A}^{\sat}/J_{\A}$, does not have generators in degree less than $n-1$.  In other words, $b_{0,j}(J_{\A}^{\sat}/J_{\A})=0$ for $j<n-1$.  Thus it follows from Equation~\eqref{eq:Betti1} that $b_{1,j}(D_0(\A))=0$ for $j>n-1$.  Since $J_{\A}^{\sat}/J_{\A}$ has generators in degree $n-1$, it does not have syzygies in degree less than $n$.  In other words, $b_{1,j}(J_{\A}^{\sat}/J_{\A})=0$ for $j<n$.  Thus it follows from Equation~\ref{eq:Betti0} that $b_{0,j}(D_0(\A))=0$ for $j>n-2$.  The statement about $\reg(D_0(\A))$ follows immediately.
\end{proof}

Next we identify those line arrangements $\A$ whose module of logarithmic derivations achieves the maximum possible regularity of $n-2$.  We use one more piece of terminology.  An arrangement $\A$ is \textit{irreducible} provided that it does not split as a product $\A_1\times\A_2$ of arrangements of strictly smaller dimension.  The product structure can be identified from the forms $\alpha_1,\ldots,\alpha_n$ as follows.  $\A$ splits as a product if and only if, perhaps after a suitable change of coordinates, there exist nontrivial partitions $Y_1\sqcup Y_2=\{x,y,z\}$ and $Z_1\sqcup Z_2=\{\alpha_1,\ldots,\alpha_n\}$ so that the linear forms in $Z_1$ are written solely in the variables of $Y_1$ and the linear forms in $Z_2$ are written only in the variables of $Y_2$.  For example, if $Q(\A)=xy(y-z)(y-2z)$, then $\A=\A_1\times\A_2$ where $Q(\A_1)=x$ and $Q(\A_2)=y(y-z)(y-2z)$.

\begin{cor}\label{cor:DerivationRegularity}
Suppose $\A$ is an \textit{irreducible} line arrangement with $n$ lines.  Then the following are equivalent:
\begin{enumerate}
\item $\A$ is not formal
\item $\matroid(\alpha)$ has non-trivial $\wrep{}$s
\item $\reg(D_0(\A))=n-2$
\end{enumerate}
\end{cor}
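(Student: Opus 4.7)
The plan is to establish the three implications in the order (1) $\Leftrightarrow$ (2), (2) $\Rightarrow$ (3), (3) $\Rightarrow$ (2). The equivalence (1) $\Leftrightarrow$ (2) should follow immediately from Corollary~\ref{cor:FormalityAndParallelDrawings}: since the irreducibility hypothesis forces $\A$ to be essential in $\bbP^2$ (no common point of all lines), we have $\rank(\A)=3$, so having no non-trivial $\wrep{}$s up to rank $3$ coincides with having no non-trivial $\wrep{}$s of $\A$ in the sense of Notation~\ref{not:wrep}.

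For (2) $\Rightarrow$ (3), I would invoke Corollary~\ref{cor:DerivationSyzygies}: a non-trivial $\wrep{}$ produces a minimal first syzygy of $D_0(\A)$ of degree $n-1$, so $\reg(D_0(\A)) \geq n-2$. The matching upper bound $\reg(D_0(\A)) \leq n-2$ from Corollary~\ref{cor:Regularity} then forces equality.

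The substantive direction is (3) $\Rightarrow$ (2). Since the resolution in Equation~\eqref{eq:resolution} shows that $D_0(\A)$ has projective dimension at most $1$, its Betti table has at most two columns, and $\reg(D_0(\A)) = n-2$ forces either $b_{1,n-1}(D_0(\A))>0$ or $b_{0,n-2}(D_0(\A))>0$. The first case gives a non-trivial $\wrep{}$ directly via Corollary~\ref{cor:DerivationSyzygies}. In the second, I plan to use Equation~\eqref{eq:Betti0} to translate the hypothesis as $b_{1,n}(J_\A^{\sat}/J_\A)>0$, and then argue that this forces $b_{0,n-1}(J_\A^{\sat}/J_\A)>0$, which via Equation~\eqref{eq:Betti1} recovers $b_{1,n-1}(D_0(\A))>0$ and reduces to the previous case. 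The crucial input is that $J_\A^{\sat} \subseteq \mathbb{I}_{n-1}(\A)$, so all minimal generators of $J_\A^{\sat}/J_\A$ live in degree $\geq n-1$; combined with the fact that entries of a minimal first syzygy matrix lie in the irrelevant maximal ideal, a minimal syzygy of degree $n$ cannot consist of constant entries paired with degree-$n$ generators. Hence at least one entry must be a linear form multiplying a generator of degree $n-1$, forcing $b_{0,n-1}(J_\A^{\sat}/J_\A)>0$.

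The main obstacle is precisely this last reduction: ruling out the possibility that $\reg(D_0(\A))=n-2$ is realized only by a maximal-degree generator without a corresponding maximal-degree syzygy. Once the three ingredients (the duality in the resolutions giving Equations~\eqref{eq:Betti0} and \eqref{eq:Betti1}, together with the degree bound $J_\A^{\sat} \subseteq \mathbb{I}_{n-1}(\A)$) are combined, this loophole closes and the equivalence falls out.
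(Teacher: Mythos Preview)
Your treatment of $(1)\Leftrightarrow(2)$ and $(2)\Rightarrow(3)$ matches the paper exactly. The issue is in $(3)\Rightarrow(2)$, where your route is genuinely different from the paper's but has a gap: you never invoke irreducibility, and the implication you are trying to prove is \emph{false} without it. Take $\A$ to be a pencil of $n-1$ lines together with one general line; this arrangement is reducible and free with exponents $(1,1,n-2)$, so $b_{0,n-2}(D_0(\A))=1$ while $b_{1,n-1}(D_0(\A))=0$. Thus the chain $b_{0,n-2}(D_0(\A))>0 \Rightarrow b_{1,n}(J_\A^{\sat}/J_\A)>0 \Rightarrow b_{0,n-1}(J_\A^{\sat}/J_\A)>0 \Rightarrow b_{1,n-1}(D_0(\A))>0$ cannot hold unconditionally. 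The failing link is the first one: Equation~\eqref{eq:Betti0} comes from reading Betti numbers off the resolution~\eqref{eq:DualResolution}, which need not be minimal at the middle map $F_1^*(-3n+3)\to F_1$. A unit entry there connects a summand corresponding to a degree-$d_i$ generator of $D_0(\A)$ with one corresponding to a degree-$d_j$ generator exactly when $d_i+d_j=n-1$; so a generator in degree $n-2$ can cancel against one in degree $1$. In the free reducible example this cancellation wipes out $b_{1,n}(J_\A^{\sat}/J_\A)$ entirely.

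The paper's proof handles this case head-on: it assumes $b_{1,n-1}=0$, observes that a degree-$(n-2)$ generator $\theta$ then spans a free summand of $D_0(\A)$ (no minimal syzygies of degree $\ge n-1$ are available to involve it), forces $D_0(\A)\cong S\theta\oplus S\psi$ with $\deg\psi=1$, and then invokes~\cite[Proposition~4.29]{OT92} to conclude $\A$ is reducible---contradicting the hypothesis. Your approach can be repaired along similar lines: first use irreducibility to rule out any degree-$1$ element of $D_0(\A)$ (this is exactly what~\cite[Proposition~4.29]{OT92} gives), so that no cancellation occurs in~\eqref{eq:DualResolution} at the degree in question and Equation~\eqref{eq:Betti0} is valid at $j=n-2$. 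After that patch, your duality-and-degree-count argument goes through and gives a legitimate alternative to the paper's free-summand argument.
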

\begin{proof}
The equivalence of (1) and (2) is the content of Corollary~\ref{cor:FormalityAndParallelDrawings}.  To show (2) implies (3), assume that $\matroid(\alpha)$ has non-trivial $\wrep{}$s.  Then by Corollary~\ref{cor:DerivationSyzygies}, $b_{1,n-1}(D_0(\A))>0$.  Hence $\reg(D_0(\A))\ge n-2$.  By Corollary~\ref{cor:Regularity}, $\reg(D_0(\A))=n-2$.

Now assume (3) holds.  To prove (2) it suffices, by Corollary~\ref{cor:DerivationSyzygies}, to show that $b_{1,n-1}=b_{1,n-1}(D_0(\A))>0$.  Assume that $b_{1,n-1}=0$.  Then, since $\reg(D_0(\A))=n-2$, we must have $b_{0,n-2}=b_{0,n-2}(D_0(\A))>0$.  Thus $D_0(\A)$ must have a generator, say $\theta$, of degree $n-2$.  Since $D_0(\A)$ has no minimal syzygies of degree $n-1$ or higher (from the assumption that $b_{1,n-1}=0$), $\theta$ generates a free summand $S\theta$ of $D_0(\A)$.
Since $D_0(\A)$ has rank two, this means $D_0(\A)$ must split as a direct sum $S\theta \oplus S\psi$ for some other derivation $\psi\in D_0(\A)$.  So $\A$ is necessarily a free arrangement.  In this case it is known that the degrees of $\theta$ and $\psi$ must sum to $n-1$, so $\psi$ has degree $1$.  Thus $\A$ has \textit{exponents} $(1,1,n-2)$ (the exponents are the degrees of the free generators of $D(\A)$).  By~\cite[Proposition~4.29]{OT92}, $\A$ must split as a product of two arrangements, violating the assumption that $\A$ is irreducible.  This is a contradiction, hence $b_{1,n-1}>0$ and $\matroid(\alpha)$ has non-trivial $\wrep{}$s by Corollary~\ref{cor:DerivationSyzygies}.
\end{proof}

We give an application of Corollary~\ref{cor:DerivationRegularity} for certain line arrangements that have attracted quite a bit of attention lately.  These are the \textit{nearly free} line arrangements (nearly free curves were introduced by Dimca and Sticlaru in~\cite{Dimca-Sticlaru-Nearly-Free-2018}).  An arrangement $\A$ is called nearly free if the minimal free resolution of $D_0(\A)$ has the form
\[
0\to S(-b-1)\to S(-a)\oplus S(-b)^2\to S,
\]
where $a\le b$ (notice that $\reg(D_0(\A))=b$).  That is, $D_0(\A)$ is generated by three derivations with degrees $a,b,b$ satisfying a single relation of degree $b+1$.  This notion was generalized by Abe to \textit{plus-one generated} line arrangements in~\cite{Abe-Plus-One-2021}.  A plus-one generated line arrangement $\A$ is one whose module $D_0(\A)$ has minimal free resolution of the form
\[
0\to S(-d-1)\to S(-a)\oplus S(-b)\oplus S(-d) \to S,
\]
where $a\le b\le d$.  Abe calls $a$ and $b$ the \textit{exponents} of the plus-one generated arrangement and $d$ its \textit{level}.  Notice that $\reg(D_0(\A))=d$.  In~\cite{Abe-Plus-One-2021}, Abe showed that removing or adding a line to any free arrangement results in a plus-one generated arrangement.  It is immediate from Corollary~\ref{cor:DerivationRegularity} that if $\A$ is a plus-one generated line arrangement of $n$ lines and level $d\le n-2$, then $\A$ is formal.  It turns out that almost every nearly free line arrangement is formal.  In the following proposition, a \textit{pencil} of lines is any number of lines that pass through a single point.

\begin{prop}
Suppose $\A$ is an irreducible nearly free line arrangement.  Then $\A$ is formal unless it is obtained from a pencil of lines by adding two general lines.
\end{prop}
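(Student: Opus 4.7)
The plan is to pin down the exponents of $\A$ using the non-formality and nearly-free assumptions, and then apply Abe's addition-deletion theory for plus-one generated arrangements to identify the geometric structure.

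Since $\A$ is irreducible and not formal, Corollary~\ref{cor:DerivationRegularity} forces $\reg(D_0(\A)) = n - 2$. In the nearly-free resolution
\[
0 \to S(-b-1) \to S(-a) \oplus S(-b)^2 \to D_0(\A) \to 0
\]
with $a \le b$, the regularity equals $b$, so $b = n - 2$. The identity $a + b = n$ for nearly-free arrangements can be deduced via a Hilbert-series computation: using $\syz(J_\A) \cong D_0(\A)(-(n-1))$, the numerator $N(t) = 1 - 3t^{n-1} + t^{n-1+a} + 2t^{n-1+b} - t^{n+b}$ of $(1-t)^3 H(S/J_\A, t)$ must satisfy $N(1) = N'(1) = 0$ because $S/J_\A$ has Krull dimension one, and a direct expansion gives $N'(1) = a + b - n$. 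Hence $a = 2$.

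I would next apply Abe's results from~\cite{Abe-Plus-One-2021}, which show that a nearly-free line arrangement arises from a free line arrangement $\A' = \A \setminus H$ by the addition of a single line $H$. The exponents of $\A'$ must sum to $n - 2$, and matching the change in minimal free resolution under addition narrows them to $(1, n-3)$: the alternative $(2, n-4)$ would produce a resolution for $\A$ with a different leading generator, incompatible with $a = 2$ and $b = n - 2$. Free line arrangements in $\mathbb{P}^2$ with exponents $(1, n-3)$ are precisely the near-pencils, i.e., a pencil of $n-2$ lines through a common point together with one additional general line (see~\cite[Proposition~4.29]{OT92} together with the splitting criterion used in the proof of Corollary~\ref{cor:DerivationRegularity}). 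Adding a further line $H$ yields either a larger near-pencil (if $H$ passes through the pencil point), which would make $\A$ free, hence formal, contradicting the hypothesis; or a pencil of $n-2$ lines plus two general lines, the claimed structure.

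The main obstacle is the exponent-matching in the third step: rigorously ruling out $(2, n-4)$ as the exponents of $\A'$ using Abe's addition-deletion framework. A backup route relying only on tools developed in this paper would leverage Corollary~\ref{cor:DerivationSyzygies}—$b_{1, n-1}(D_0(\A)) = 1$ gives exactly one non-trivial $\wrep{}$ of $\matroid(\alpha)$—combined with the existence of a degree-$2$ derivation and the Tjurina identity $\tau = n^2 - 4n + 6$ (equivalent to $\sum_p \binom{m_p - 1}{2} = \binom{n-3}{2}$), to force the presence of a singular point of multiplicity $n - 2$ and thereby the pencil-plus-two structure.
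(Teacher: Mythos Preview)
Your reduction to $a = 2$ is correct and matches the paper's opening: the paper cites \cite[Proposition~4.1]{Abe-Plus-One-2021} for $a + b = n$ where you give a self-contained Hilbert-series derivation, and both arguments use Corollary~\ref{cor:DerivationRegularity} to obtain $b = n - 2$ from non-formality. (The paper treats $a = 1$ as a separate case, dismissed as reducible via \cite[Proposition~4.29]{OT92}; your route bypasses this since $a + b = n$ and $b = n-2$ force $a = 2$ directly.)

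The gap is in your third step, and it is precisely the one you flagged. Abe's results in \cite{Abe-Plus-One-2021} run in the \emph{forward} direction: adding or deleting a hyperplane from a free arrangement yields a plus-one generated arrangement. They do not assert that every nearly-free line arrangement is obtained by adding a line to some free $\A' = \A \setminus H$, so there is no guaranteed free $\A'$ on which to perform exponent-matching, and the argument cannot proceed as written.

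The paper avoids addition-deletion for this step entirely. Having $a = 2$ means $D_0(\A)$ has a minimal generator of degree two, and the paper invokes Toh\u{a}neanu's classification \cite[Theorem~2.4]{Toh-2016} of line arrangements admitting a degree-two logarithmic derivation: there are three types, two of which are supersolvable (hence free and formal), and the third is a pencil of lines with two general lines added. The paper then verifies that this third type is indeed nearly free and non-formal, now using \cite[Theorem~1.11]{Abe-Plus-One-2021} in its legitimate direction (start from the free near-pencil of $k+1$ lines with exponents $(1,k-1)$ and add one general line). Your backup route through the Tjurina identity and the unique non-trivial $\wrep{}$ is aimed at the same geometric conclusion---a point of multiplicity $n-2$---but Toh\u{a}neanu's theorem is the off-the-shelf tool that closes the argument without further work.
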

\begin{proof}
Since $\A$ is nearly free, $D_0(\A)$ has a minimal free resolution of the form
\[
0\to S(-b-1)\to S(-a)\oplus S(-b)^2\to S,
\]
where $a\le b$.  By~\cite[Proposition~4.1]{Abe-Plus-One-2021}, $a+b=n,$ where $n$ is the number of lines in the arrangement.  By Corollary~\ref{cor:DerivationRegularity}, $\A$ is formal if $b<n-2$, or equivalently if $a>2$.  We consider the cases where $a=1$ and $a=2$.  If $a=1$ then $\A$ is reducible by~\cite[Proposition~4.29]{OT92}, contrary to our hypothesis.  For the $a=2$ case, the line arrangements with a minimal derivation of degree two have been fully classified by Tohaneanu~\cite[Theorem~2.4]{Toh-2016}.  There are three types.  The first and third types consist of supersolvable line arrangements which are free and hence formal.  The second type consists of a pencil of lines with two general lines added to it.  These are not formal.  We claim that a pencil of lines with two general lines added is nearly free.  First of all, a pencil of lines with one general line added is free since it is a product of a one dimensional arrangement (the general line) and the pencil.  Let $\A'$ be the pencil of $k$ lines with a general line added.  Then $D_0(\A')$ must be generated in degrees $1$ and $k-1$ (since it is free, the degrees of its generators add to one less than the number of lines).  Now let $\A$ represent the line arrangement $\A'$ with another general line added.  It follows from~\cite[Theorem~1.11]{Abe-Plus-One-2021} that $D_0(\A)$ is generated in degrees $2,k,k$, with a relation in degree $k+1$.  Then $\reg(D_0(\A))=k$ and $\A$ is not formal by Corollary \ref{cor:Regularity}. Thus if $\A$ consists of two general lines added to a pencil of lines, it is nearly free but not formal.
\end{proof}

\section{Concluding Remarks and Questions}
\label{sec:questions}

In this section we make connections to the literature and close with a number of questions.  The approach we have taken in this paper provides a new and direct connection between formality and the Jacobian ideal.  We found this approach to be the most concrete in characteristic zero.  Historically, however, the connection between formality and freeness of hyperplane arrangements was established via Yuzvinsky's lattice sheaf cohomology~\cite{Yuz93A}.  M\"ucksch~\cite{Mucksch20} has recently shown that Yuzvinsky's lattice sheaf cohomology coincides with the sheaf cohomology of $\widetilde{D(\A)}$.

Brandt and Terao establish a different proof that free arrangements are formal in~\cite{Brandt-Terao-1994}, where they also introduce the notion of $k$-formality and show that a free arrangement $\A$ is $k$-formal for every $2\le k\le \rank(\A)-1$.  The construction of Brandt and Terao has recently been extended to a chain complex for multi-arrangements in~\cite{DiPasquale_2022}.  A result of Schenck and Stiller~\cite{Schenck-Stiller-2002} applied to this chain complex shows that its cohomologies are connected to the sheaf cohomology of $\widetilde{D(\A)}$ via a Cartan-Eilenberg spectral sequence.

More specifically, the following list of modules are all graded isomorphic for a line arrangement $\A$ with $n$ lines in characteristic zero:
\begin{enumerate}
\item $H^1(\cD^\bullet(\A))$, where $\cD^\bullet(\A)$ is a chain complex defined in~\cite{DiPasquale_2022} using constructions of Brandt and Terao~\cite{Brandt-Terao-1994}.
\item The lattice sheaf cohomology $H^1(L_0,\mathcal{D})$, where $L_0$ is the lattice of intersections of lines of $\A$ (disregarding the intersection of all lines) ordered by inclusion and $\mathcal{D}$ is the sheaf of derivations on $L_0$ -- studied by Yuzvinsky~\cite{Yuz91,Yuz93A,Yuz93B}
\item The sheaf cohomology module $H^1_*(\widetilde{D(\A)})=\bigoplus_{i\in\Z} H^1(\widetilde{D(\A)}(i))$
\item The local cohomology module $H^2_{\mathfrak{m}}(D(\A))$, where $\mathfrak{m}$ is the homogeneous maximal ideal.
\item $J^{\sat}_{\A}/J_{\A}(n-1)$, where $(n-1)$ represents a shift forward by $n-1$.
\end{enumerate}

The isomorphism of the last three modules is standard.  That (2) is isomorphic to $H^1_*(\widetilde{D(\A)})=\bigoplus_{i\in\Z} H^1(\widetilde{D(\A)}(i))$ has recently been shown by M\"ucksch~\cite{Mucksch20}.  That (1) is isomorphic to $H^1_*(\widetilde{D(\A)})$ follows since $D(\A)$ is a second syzygy of $H^1(\cD^\bullet)$ (we omit the details since it would take us too far afield).

In higher dimensions, modules (2)-(5) are all isomorphic again.  The isomorphism of (3),(4), and (5) is standard while the isomorphism between (2) and (3) follows from~\cite{Mucksch20}.  However, the module (1) is not necessarily isomorphic to these (as indicated above, it is connected via a Cartan-Eilenberg spectral sequence). 
We expect that many of our results -- for instance the statement in Corollary~\ref{cor:MaxPDim} that if $\A\subset\bbP^\ell$ is not $k$-generated then $D(\A)$ has projective dimension at least $k-1$ -- can be derived using either
\begin{itemize}
    \item Yuzvinsky's lattice sheaf cohomology coupled with the result of M\"uksch that Yuzvinsky's lattice sheaf cohomology coincides with the sum of twists of the sheaf cohomology of $\widetilde{D(\A)}$~\cite{Mucksch20}   or
    \item The chain complex $\cD^\bullet(\A)$ built in~\cite{DiPasquale_2022} using Brandt and Terao's constructions~\cite{Brandt-Terao-1994}.
\end{itemize}   
Either approach has potential to extend our results to fields of non-zero characteristic, although it is not clear that the results of Section~\ref{sec:ExtremalSyzygies} will extend, due to the dependence in that section on the duality inherent in the resolution of $J^\sat_\A/J_\A$.

\begin{ques}
How can the arguments of this paper be recast in terms of either Yuzvinsky's lattice sheaf cohomology (and hence the sheaf cohomology of $\widetilde{D(\A)}$ by~\cite{Mucksch20}) or the chain complex $\cD^\bullet(\A)$ from~\cite{DiPasquale_2022} which builds on constructions of Brandt and Terao~\cite{Brandt-Terao-1994}?  Which results can be extended to arbitrary characteristic?  What is the explicit connection between $\wrep{}$s of a hyperplane arrangement $\A$ and the sheaf cohomology of $\widetilde{D(\A)}$?
\end{ques}

\begin{ques}
In characteristic zero, Yuzvinsky's `first two' obstructions to freeness of arrangements in~\cite{Yuz93A} can be identified (thanks to~\cite{Mucksch20}) with the degree $n-1$ and degree $n$ parts of the $3$-saturation of $J_{\A}$ (where $n$ is the number of hyperplanes in $\A$).  In this paper we have considered only the degree $n-1$ part.

Is there a geometric interpretation for the degree $n$ part of the $k$-saturation of $J_\A$?  Can we find a rank three matroid with representations $\A(\alpha),\A'(\beta)$ so that the saturations of $J_{\A}$ and $J_{\A'}$ have the same vector space dimension in degree $n-1$ but different dimension in degree $n$?
\end{ques}

\begin{ques}
\label{ques:regularity}
If $\A\subset \bbP^\ell$ is a hyperplane arrangement with $n$ hyperplanes, Derksen and Sidman show in~\cite{Derksen-Sidman-2004} that the regularity of $D(\A)$ is bounded above by $n-1$.  If $\A$ is essential, this has recently been improved by Saito to $\reg(D(\A))\le n-\ell$, fully generalizing Schenck's bound~\cite{SchenckElementaryModifications03}.  This is sharp, since a generic arrangement $\A$ satisfies $\reg(D(\A))=n-\ell$ by~\cite{RoseTeraoGeneric91}.
Is there any connection between formality and maximal regularity when $\ell\ge 3$, as there is for line arrangements ($\ell=2$) by Corollary~\ref{cor:DerivationRegularity}?
\end{ques}

\begin{ques}\label{ques:graphs}
With a view towards Terao's conjecture, is there a generically rigid graph $G$ with generic matroid $\matroid(G)$ and two frameworks $G(\mathbf{p})$ and $G(\mathbf{q})$ so that both $\A_{G(\mathbf{p})}$ and $\A_{G(\mathbf{q})}$ have the matroid $\matroid(G)$ and
\begin{enumerate}
\item $G(\mathbf{p})$ is infinitesimally flexible (so $\A_{G(\mathbf{p})}$ is not formal and so not free) but
\item $\A_{G(\mathbf{q})}$ is free?
\end{enumerate}
\end{ques}

A positive answer to Question~\ref{ques:graphs} would provide a counterexample to Terao's conjecture.  As a first step, examine the characteristic polynomial of the matroid $\matroid(G)$.  It can often be shown that this characteristic polynomial does not factor with non-negative integer roots and hence does not admit any free representations by Terao's celebrated factorization theorem~\cite{Terao-Factorization-1981}.

\begin{ques}
The existence of non-trivial \wrep{}s for $\matroid(G)$, where $G$ is a generically minimally rigid graph, can be detected by the so-called \textit{pure condition} of White and Whiteley~\cite{WhiteWhiteleyAlgGeoStresses}.  Is there a pure condition which detects non-trivial \wrep{}s up to rank $k$ for matroids that are appropriately \textit{minimal}?
\end{ques}

Theoretically, such a `pure condition' would allow one to produce many examples in three (and higher) dimensions which exhibit behavior similar to Examples~\ref{ex: Kmn} and~\ref{exm:ExtendingZiegPair}.

\begin{ques}
In~\cite{Brandt-Terao-1994}, Brandt and Terao introduced a notion of $k$-formality which generalizes Falk and Randell's original notion.  The spaces governing $k$-formality are related to a certain graded strand of the cohomology modules $H^i_*(\widetilde{D(\A)})$ -- equivalently, the local cohomology modules of $D(\A)$ -- via a Cartan-Eilenberg spectral sequence (see~\cite{DiPasquale_2022,Schenck-Stiller-2002}).  Is there a geometric interpretation for the spaces governing $k$-formality which generalizes the notion of \wrep{}s up to rank $3$?
\end{ques}

For the next question, we ask what aspects of this paper can be extended to \textit{multi-arrangements}.  A multi-arrangement is a pair $(\A,\mathbf{m})$ consisting of a hyperplane arrangement $\A(\alpha)=\cup_{i=1}^n H_i$ and a \textit{multiplicity} function $\mathbf{m}:\{H_1,\ldots,H_n\}\to \Z_{\ge 1}$.  The module of \textit{multiderivations} $D(\A,\m)$ of $(\A,\mathbf{m})$ is defined by
\[
D(\A,\m):=\{\theta\in \mbox{Der}_{\kk}(S): \theta(\alpha_i)\in \langle \alpha_i^{\mathbf{m}(H_i)}\rangle \mbox{ for } i=1,\ldots,n \}.
\]

\begin{ques}
What (if any) of the results of this paper be extended to multi-arrangements?  Even in characteristic zero, we are not aware of a direct connection between the module of multi-derivations and the Jacobian ideal.  However, a free multi-arrangement must still be formal (and even $k$-formal)~\cite{DiPasquale_2022}.  In line with similar results from~\cite{DiPasquale_2022}, we suspect Corollary~\ref{cor:MaxPDim} carries over to multi-arrangements.  That is, if $\A$ is not $k$-generated then we suspect that $D(\A,\mathbf{m})$ has projective dimension at least $k-1$ for any multiplicity $\mathbf{m}$.
\end{ques}

Finally, Uli Walther brought to our attention the connection between the Bernstein-Sato polynomial of an arrangement and the saturation of its Jacobian ideal, which is worked out in~\cite[Section~5.3]{Walther-2017}.  Since we have shown in Theorem~\ref{thm:SaturationParallelDrawing} that formality is closely tied to polynomials of maximal degree in the saturation of the Jacobian ideal, we raise the following question.

\begin{ques}\label{ques:BernsteinSato}
Can explicit connections be made between formality of an arrangement and its Bernstein-Sato polynomial?
\end{ques}

\section{Acknowledgments}
We thank Dan Bath for pointing out to us Saito's improvement~\cite[Proposition~1.3]{saito2019degeneration} on the bound from~\cite{Derksen-Sidman-2004}.  Uli Walther directed us to the connection of Ziegler's pair with the Bernstein-Sato polynomial, and also to the work of Ruud Pellikaan on the duality of $J^{\sat}_{\A}/J_{\A}$.  We also thank Aron Simis, Hal Schenck, Walter Whiteley, Stefan Tohaneanu, and Max Wakefield for their comments on earlier drafts.  The first author acknowledges partial support from NSF grant DMS-2201084.




\end{document}